\documentclass[a4paper,10pt]{article}

\usepackage{geometry}
\usepackage{amsmath}
\usepackage{amsfonts,color,amssymb,mathrsfs,stmaryrd}
\usepackage{amsthm}
\usepackage{amssymb}
\usepackage{url}
\usepackage{subcaption}
\usepackage{bera}
\usepackage{array}

\usepackage[normalem]{ulem}

\usepackage{graphicx}
\usepackage{wrapfig}
\usepackage{mathrsfs}
\usepackage{bbm}
\usepackage{enumitem}
\usepackage{tikz}
\usetikzlibrary{angles, quotes, calc, decorations.pathreplacing}

\newcommand{\p}{\mathbb{P}}
\newcommand{\E}{\mathbb{E}}

\newcommand{\tah}{\tanh\left(\frac{h}{2}\right)}
\newcommand{\tahq}{\tanh^2\left(\frac{h}{2}\right)}

\renewcommand{\H}{{\cal P}}

\numberwithin{equation}{section}

\newtheorem{theorem}{Theorem}\numberwithin{theorem}{section}
\newtheorem{coro}[theorem]{Corollary}

\newtheorem{prop}[theorem]{Proposition}
\newtheorem{lemma}[theorem]{Lemma}

\newtheorem{rema}{Remark}

\def\HH{\bar{H}}
\def\E{\mathbb{E}}

\def\H{\mathbb{H}}
\def\0{{\bf 0}}

\def\R{\mathbb{R}}
\def\PP{\mathbb{P}}

\renewcommand{\E}{\mathbb E \,}

\newcommand{\atanh}{\mathrm{artanh}}

\def\beqn{\begin{equation}}
	\def\eeqn{\end{equation}}
\def\be{\begin{equation}}
	\def\ee{\end{equation}}

\def\R{\mathbb{R}}

\def\dint{\textup{d}}

\def\qed{\hfill\hbox{${\vcenter{\vbox{
					\hrule height 0.4pt\hbox{\vrule width 0.4pt height 6pt
						\kern5pt\vrule width 0.4pt}\hrule height 0.4pt}}}$}}

\def\dint{\mathrm{d}}
\def\HH{\mathbb{H}}

\renewcommand{\thefootnote}{\fnsymbol{footnote}}

\usepackage{titlesec}

\titleformat*{\section}{\normalfont\large\bfseries}
\titleformat*{\subsection}{\normalfont\bfseries}

\date{\vspace{-0.95cm}}

\makeatletter
\let\@fnsymbol\@alph
\makeatother

\begin{document}
	\title{\bfseries Seeing Through Hyperbolic Space: \\ Visibility for $\lambda$-Geodesic Hyperplanes}
	
	\author{%
		Zakhar Kabluchko\footnotemark[1]
		\and Vanessa Mattutat\footnotemark[2]
		\and  Christoph Thäle\footnotemark[3]
	}
	
	\date{}
	\renewcommand{\thefootnote}{\fnsymbol{footnote}}

	\footnotetext[1]{%
		University of Münster, Germany. Email: zakhar.kabluchko@uni-muenster.de
	}
	\footnotetext[2]{%
		Ruhr University  Bochum, Germany. Email: vanessa.mattutat@rub.de
	}
	\footnotetext[3]{%
		Ruhr University  Bochum, Germany. Email: christoph.thaele@rub.de
	}

	\maketitle
	\begin{abstract}\noindent
		We study visibility from a fixed point in the presence of a Poisson process of $\lambda$--geodesic hyperplanes in a $d$-dimensional hyperbolic space.  The family of $\lambda$--geodesic hyperplanes interpolates between totally geodesic hyperplanes and horospheres. Our main result establishes a universality principle for this model: we prove that the fundamental visibility properties are invariant with respect to the parameter $\lambda\in[0,1]$. Namely, there is a critical intensity $\gamma_{\mathrm{crit}}>0$ such that the visible region is unbounded with positive probability for $\gamma < \gamma_{\mathrm{crit}}$ and almost surely bounded for $\gamma > \gamma_{\mathrm{crit}}$. For $d=2$ we establish almost sure boundedness also at criticality. The value for $\gamma_{\mathrm{crit}}$ is explicit and does not depend on $\lambda$. In the bounded phase, we show that the mean visible volume is identical with the known formula for  $\lambda=0$.  The key integral-geometric step is an explicit computation showing that the  measure of $\lambda$-geodesic hyperplanes hitting a geodesic segment is a linear function of the length of the segment, independent  of~$\lambda$.
		
		\smallskip\noindent
		\textbf{Keywords.} Crofton formula, hyperbolic space, integral geometry, $\lambda$-geodesic hyperplane, percolation, phase transition, Poisson process, universality, visibility
		
		\smallskip\noindent
		\textbf{MSC 2020.} 51M10, 52A22, 53C65, 60D05
	\end{abstract}

	
	\section{Introduction}
		
	Random tessellations generated by Poisson hyperplane processes in Euclidean space form a classical and well-studied topic in stochastic geometry.  In $\mathbb{R}^d$, a stationary Poisson hyperplane process gives rise to a mosaic of convex cells, and many aspects of its geometry have been analyzed in detail, including the cell containing the origin, see \cite{HugSchneiderBook} and Chapter 10 of \cite{SW08}.
	
	When passing from Euclidean space to hyperbolic space $\mathbb{H}^d$ of constant negative curvature  $-1$, the situation becomes richer, since hyperbolic geometry admits several natural analogues of Euclidean hyperplanes. Besides totally geodesic hyperplanes, which are isometric copies of $\mathbb{H}^{d-1}$, one encounters equidistant hypersurfaces, which are defined as the set of points at fixed distance from a given geodesic hyperplane. A further limiting case is provided by horospheres, which arise as limits of equidistant hypersurfaces whose base hyperplane recedes towards the ideal boundary of $\mathbb{H}^d$. These three families can be unified by the notion of
	\emph{$\lambda$--geodesic hyperplanes} as introduced in \cite{Solanes}. For $\lambda \in [0,1]$, a
	$\lambda$--geodesic hyperplane is a complete totally umbilical hypersurface in $\mathbb{H}^d$ with normal curvature $\lambda $. Thus, $\lambda = 0$ corresponds to totally geodesic hyperplanes, which have vanishing intrinsic curvature, $0 < \lambda < 1$ yields equidistant hypersurfaces, whose curvature depends continuously on~$\lambda$, and $\lambda = 1$ gives horospheres.
	
	Every $\lambda$--geodesic hyperplane separates $\mathbb{H}^d$ into two connected components.  As we are not interested in all such hypersurfaces, we introduce the space $\mathrm{Hyp}_\lambda^o$ of all $\lambda$--geodesic hyperplanes for which the distinguished point
	$o \in \mathbb{H}^d$, which we refer to as the origin, does not lie on the convex side.  Let $\nu_\lambda$ denote the natural isometry-invariant measure on the space of all $\lambda$-geodesic hyperplanes, as introduced in Section \ref{sec:lambda-geodesic_hyperplanes} below,  and $\nu_\lambda^o$ its restriction to $\mathrm{Hyp}_\lambda^o$. For an intensity parameter $\gamma > 0$, we define $\eta_{\gamma,\lambda}$
	to be a Poisson point process on $\mathrm{Hyp}_\lambda^o$ with intensity measure $\gamma \nu_\lambda^o$.  Each hyperplane in $\eta_{\gamma,\lambda}$ acts as a random obstacle to visibility from the reference point~$o$. For a point $x \in \mathbb{H}^d$, we say that $x$ is visible from~$o$ if the geodesic segment $[o,x]$ connecting $o$ with $x$ does not intersect any $H \in \eta_{\gamma,\lambda}$.
	This leads to the random open set
	\begin{equation}\label{eq:DefVisibilitySet}
	Z_{\gamma,\lambda,d}
	:= \{\, x \in \mathbb{H}^d :
	[o,x] \cap H = \varnothing
	\text{ for all } H \in \eta_{\gamma,\lambda}
	\,\},
	\end{equation}
	which we call the \emph{visibility region} of~$o$. Equivalently, for $\lambda=0$, $Z_{\gamma,0,d}$ is the connected component of~$o$ in the complement of the union of all hyperplanes in $\eta_{\gamma,0}$. Simulations of Poisson processes of $\lambda$-geodesic hyperplanes for different values of $\lambda$ together with the corresponding visibility regions are shown in Figure \ref{fig: simulation}.

		\begin{figure}
		\centering
		\begin{minipage}{.3\linewidth}
			\hspace*{-0.4cm}\includegraphics[scale=0.09]{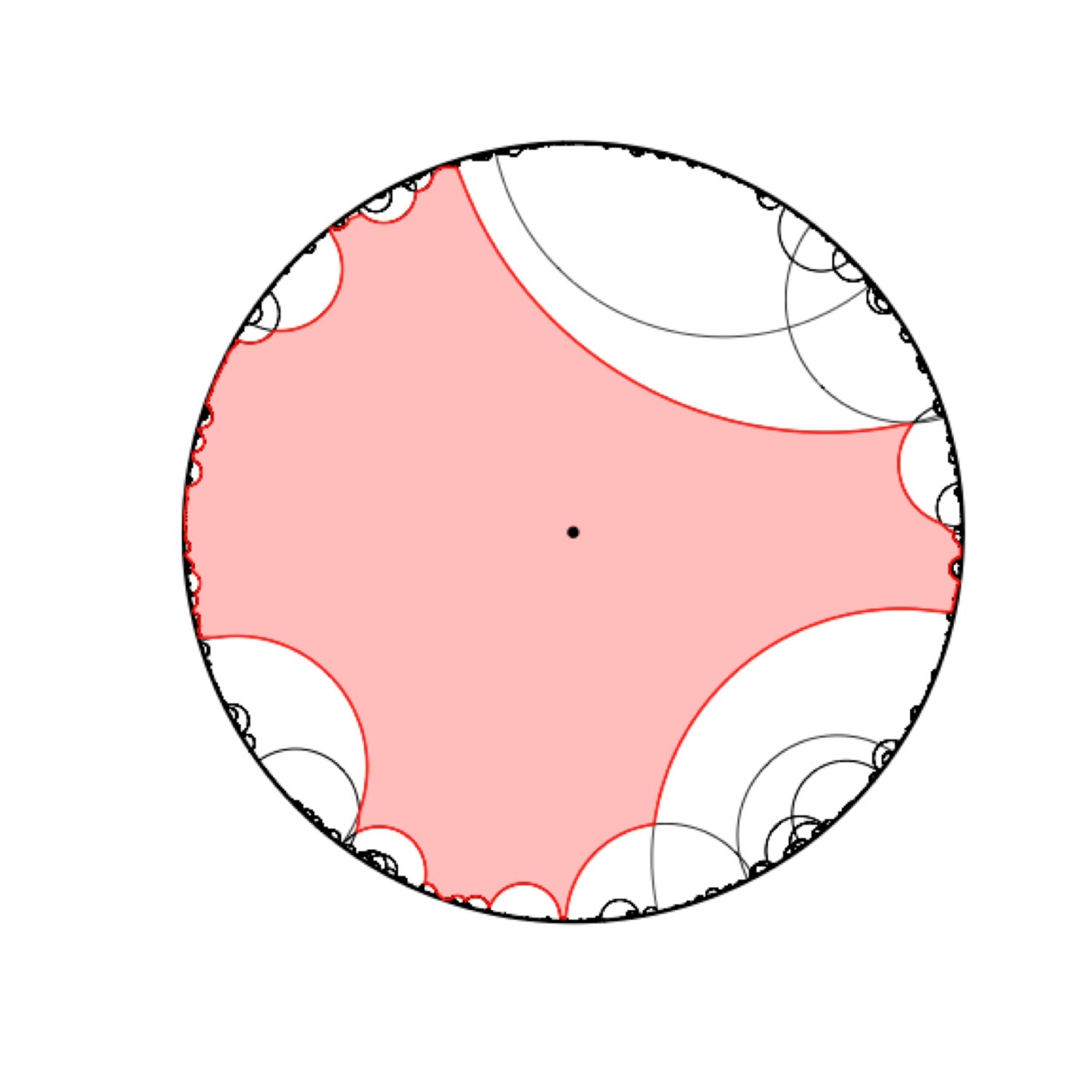}
			\subcaption{$\lambda=0, \gamma=2<\gamma_{\mathrm{crit}}$}
		\end{minipage}\quad
		\begin{minipage}{.3\linewidth}
			\hspace*{-0.4cm}	\includegraphics[scale=0.09]{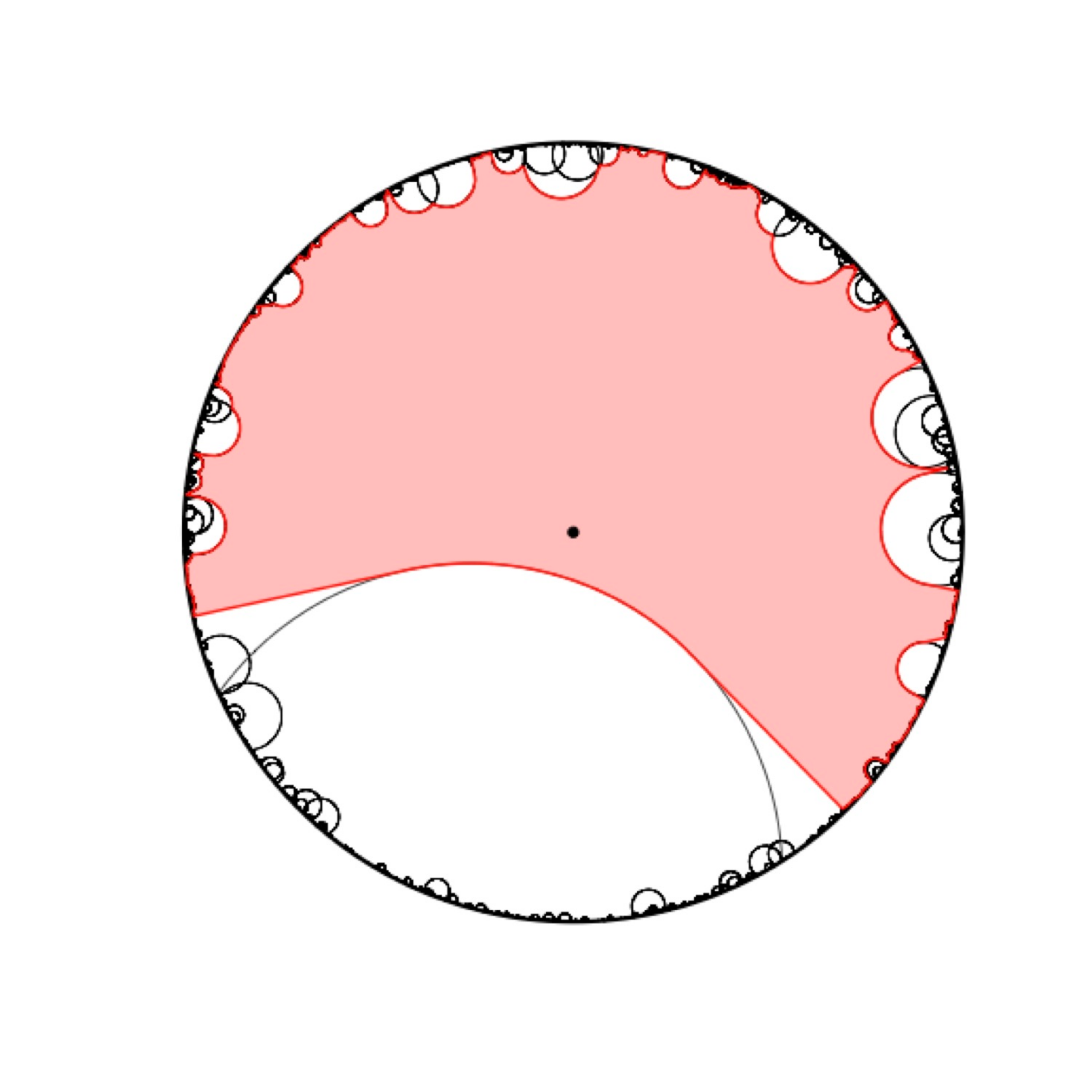}
			\subcaption{$\lambda=0.5, \gamma=2<\gamma_{\mathrm{crit}}$}
		\end{minipage}\quad
		\begin{minipage}{.3\linewidth}
			\hspace*{-0.4cm}\includegraphics[scale=0.09]{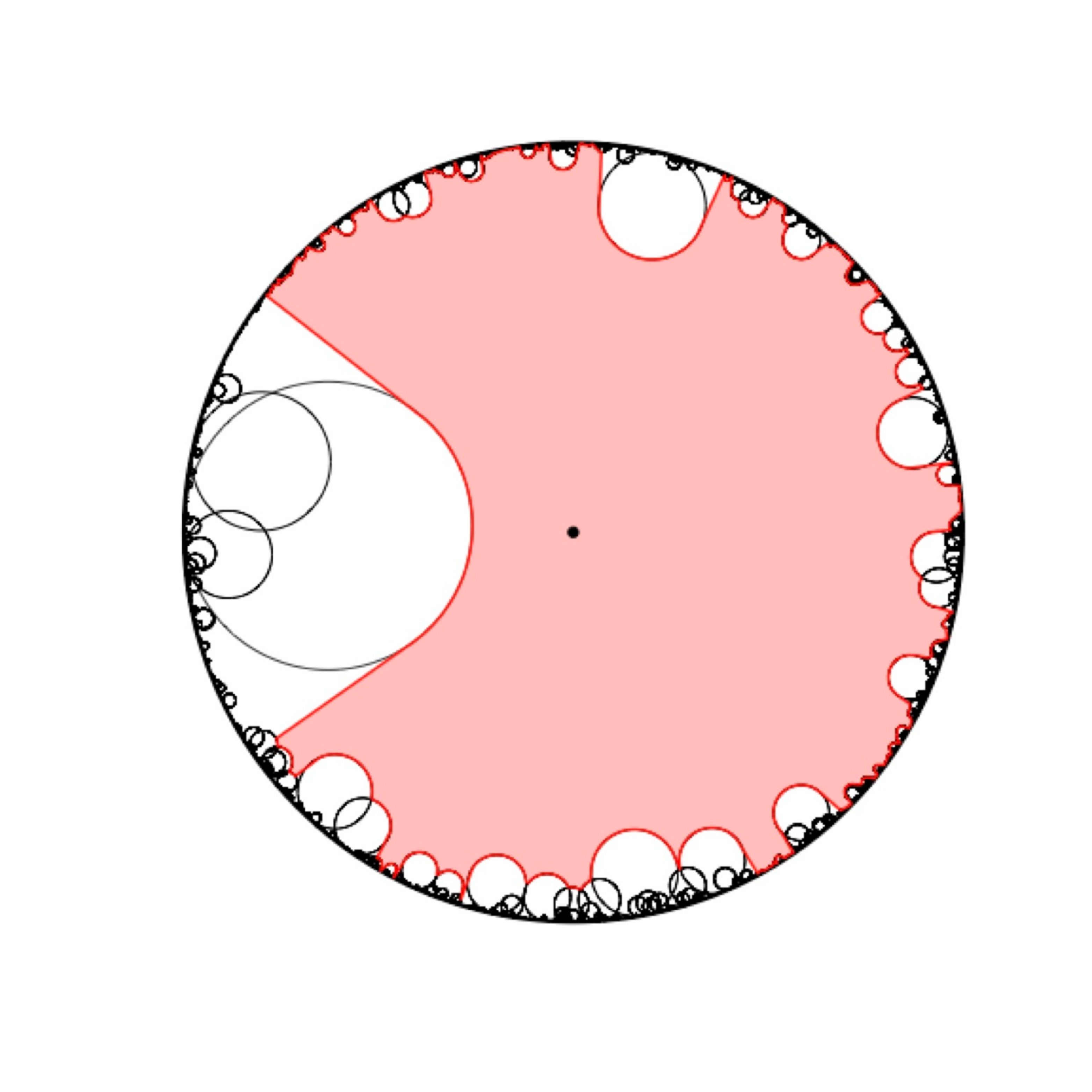}
			\subcaption{$\lambda=1, \gamma=2<\gamma_{\mathrm{crit}}$}
		\end{minipage}
		\begin{minipage}{.3\linewidth}
			\hspace*{-0.4cm}\includegraphics[scale=0.09]{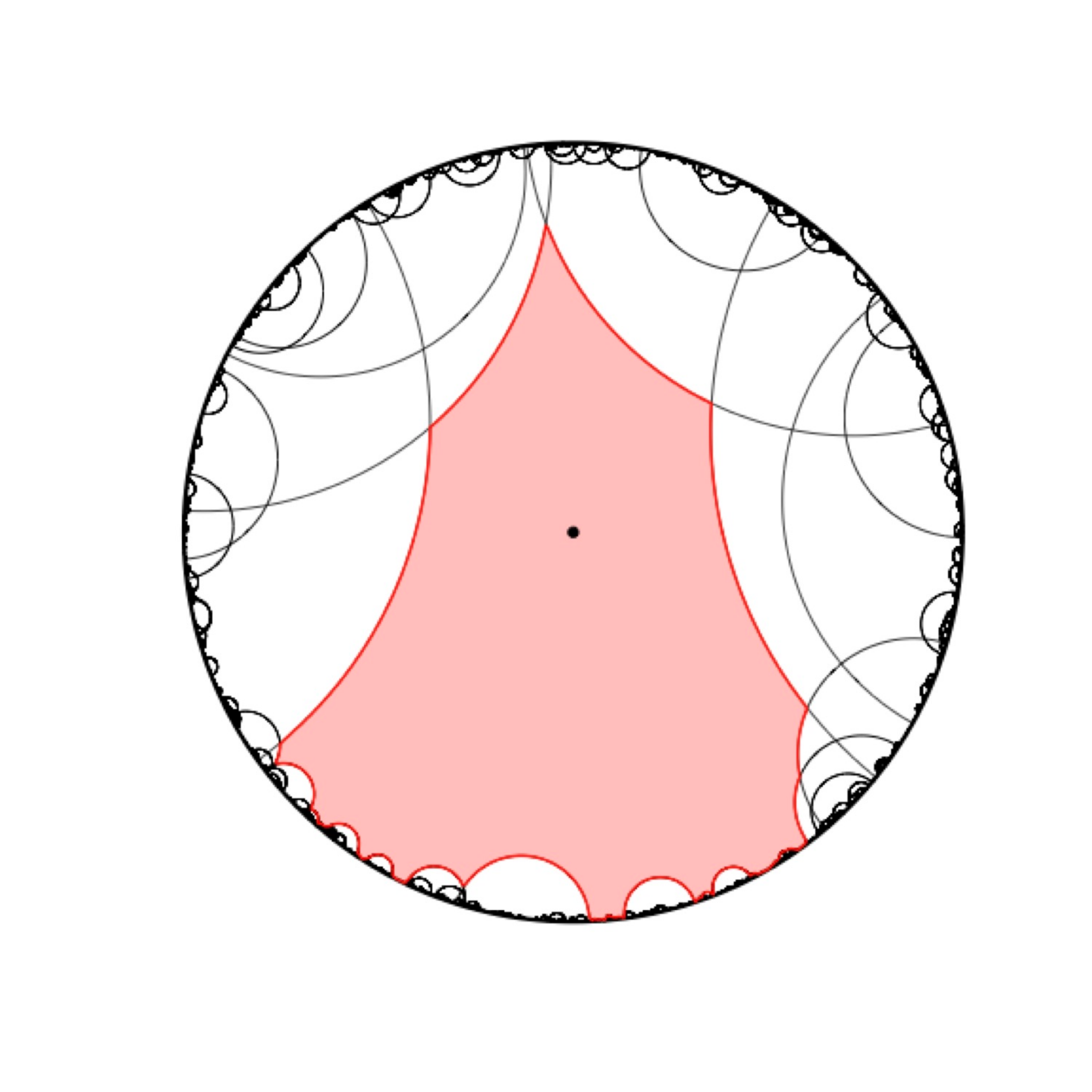}
			\subcaption{$\lambda=0, \gamma=\pi=\gamma_{\mathrm{crit}}$}
		\end{minipage}\quad
		\begin{minipage}{.3\linewidth}
			\hspace*{-0.4cm}	\includegraphics[scale=0.09]{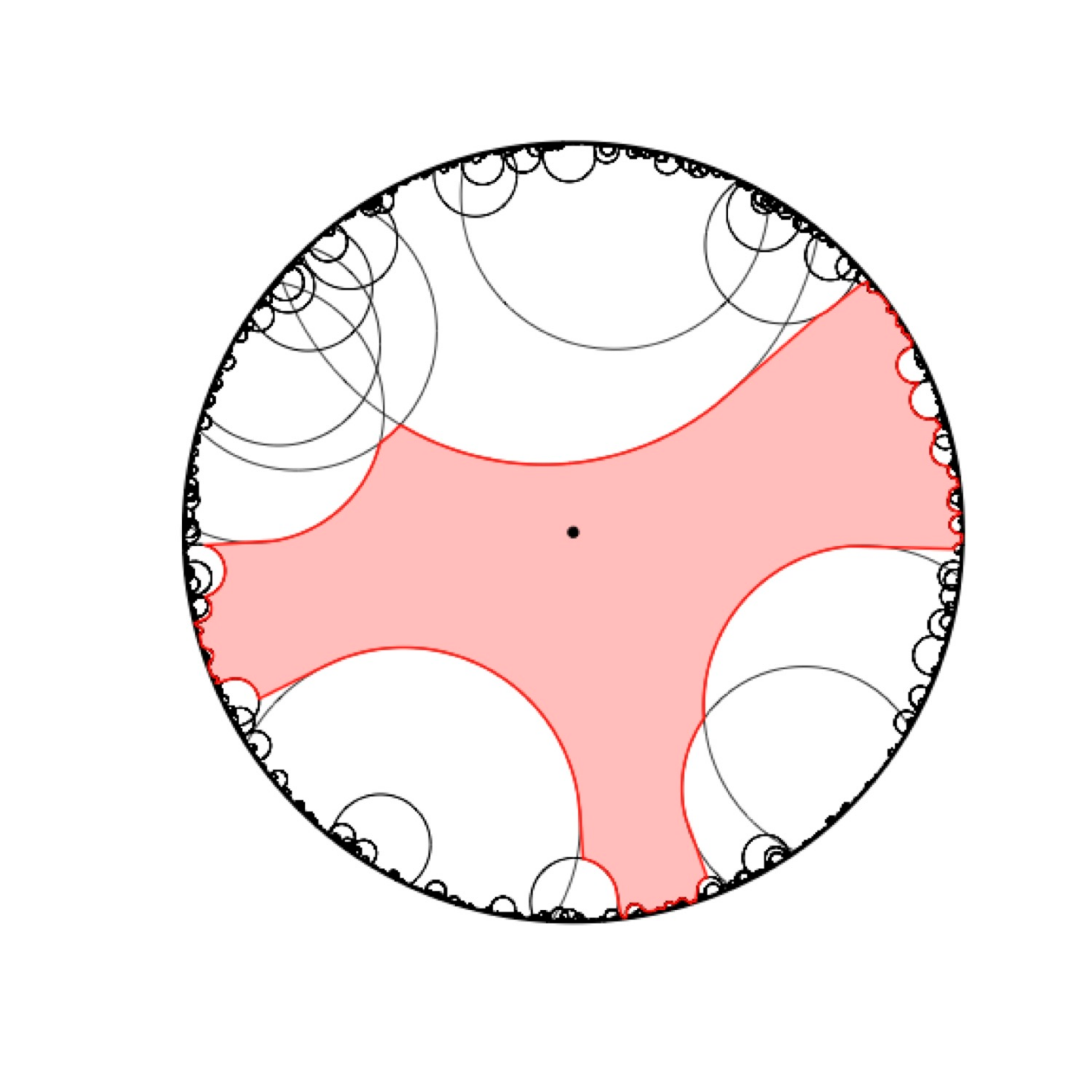}
			\subcaption{$\lambda=0.5, \gamma=\pi=\gamma_{\mathrm{crit}}$}
		\end{minipage}\quad
		\begin{minipage}{.3\linewidth}
			\hspace*{-0.4cm}\includegraphics[scale=0.09]{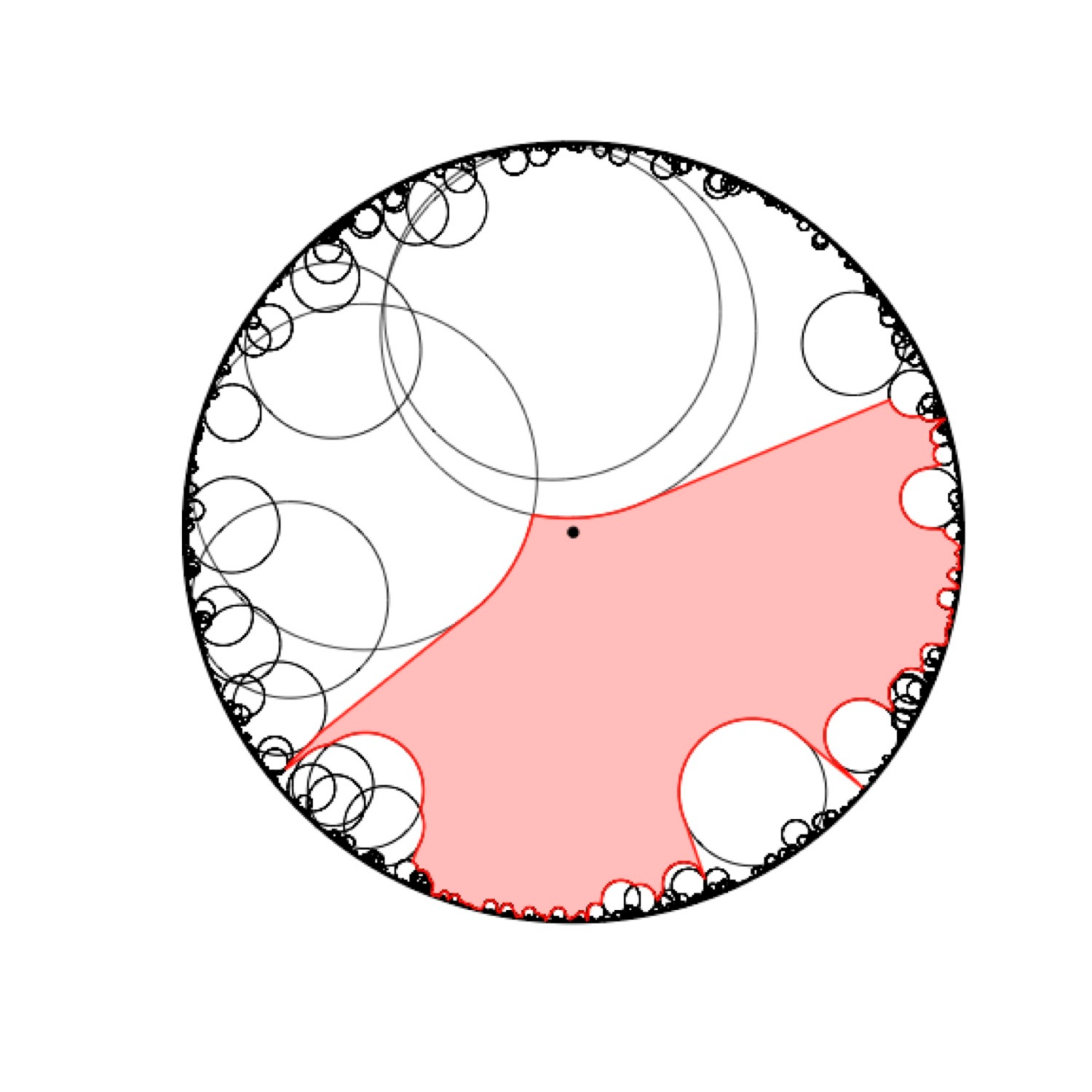}
			\subcaption{$\lambda=1,\gamma=\pi=\gamma_{\mathrm{crit}}$}
		\end{minipage}
		\begin{minipage}{.3\linewidth}
			\hspace*{-0.4cm}\includegraphics[scale=0.09]{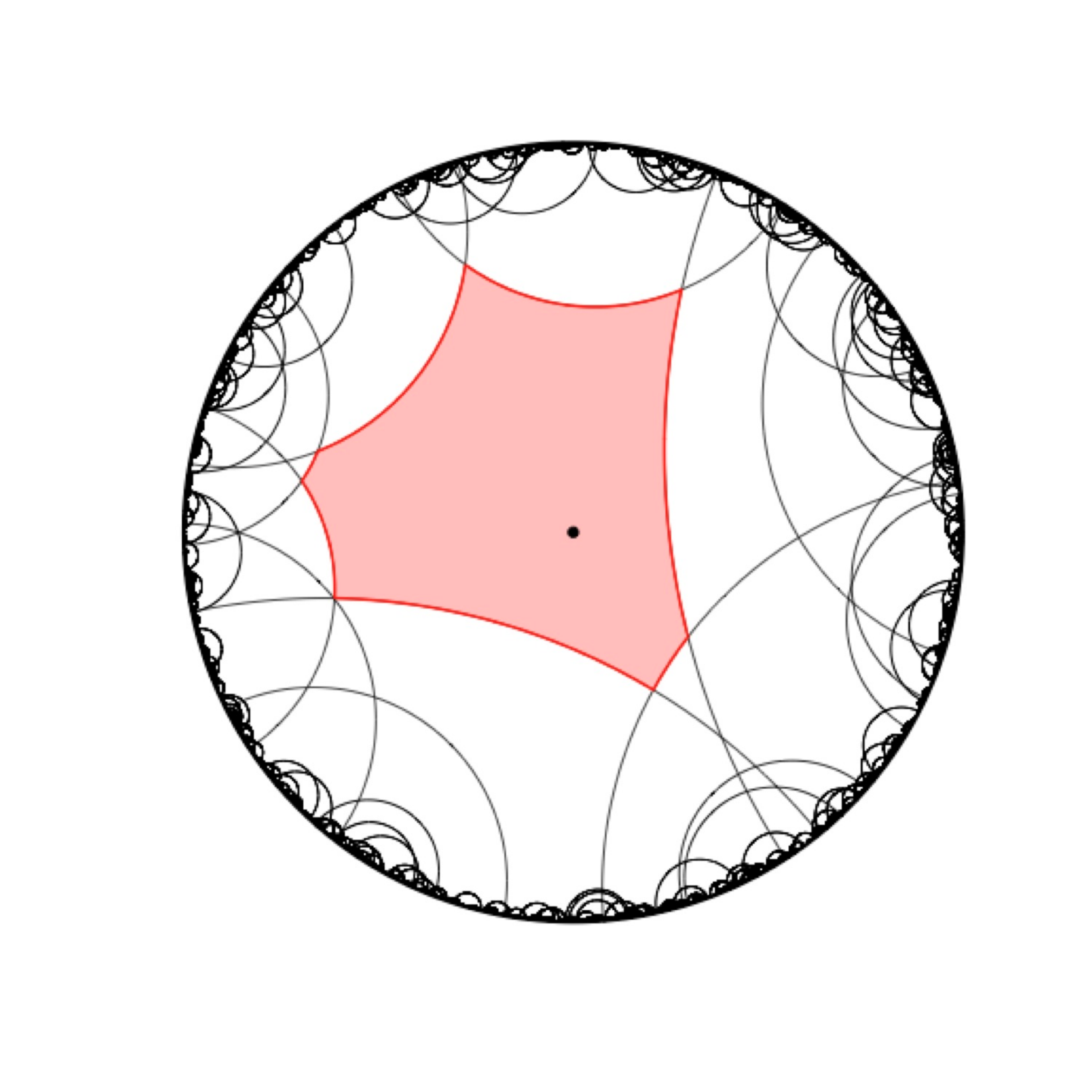}
			\subcaption{$\lambda=0, \gamma=7>\gamma_{\mathrm{crit}}$}
		\end{minipage}\quad
		\begin{minipage}{.3\linewidth}
			\hspace*{-0.4cm}	\includegraphics[scale=0.09]{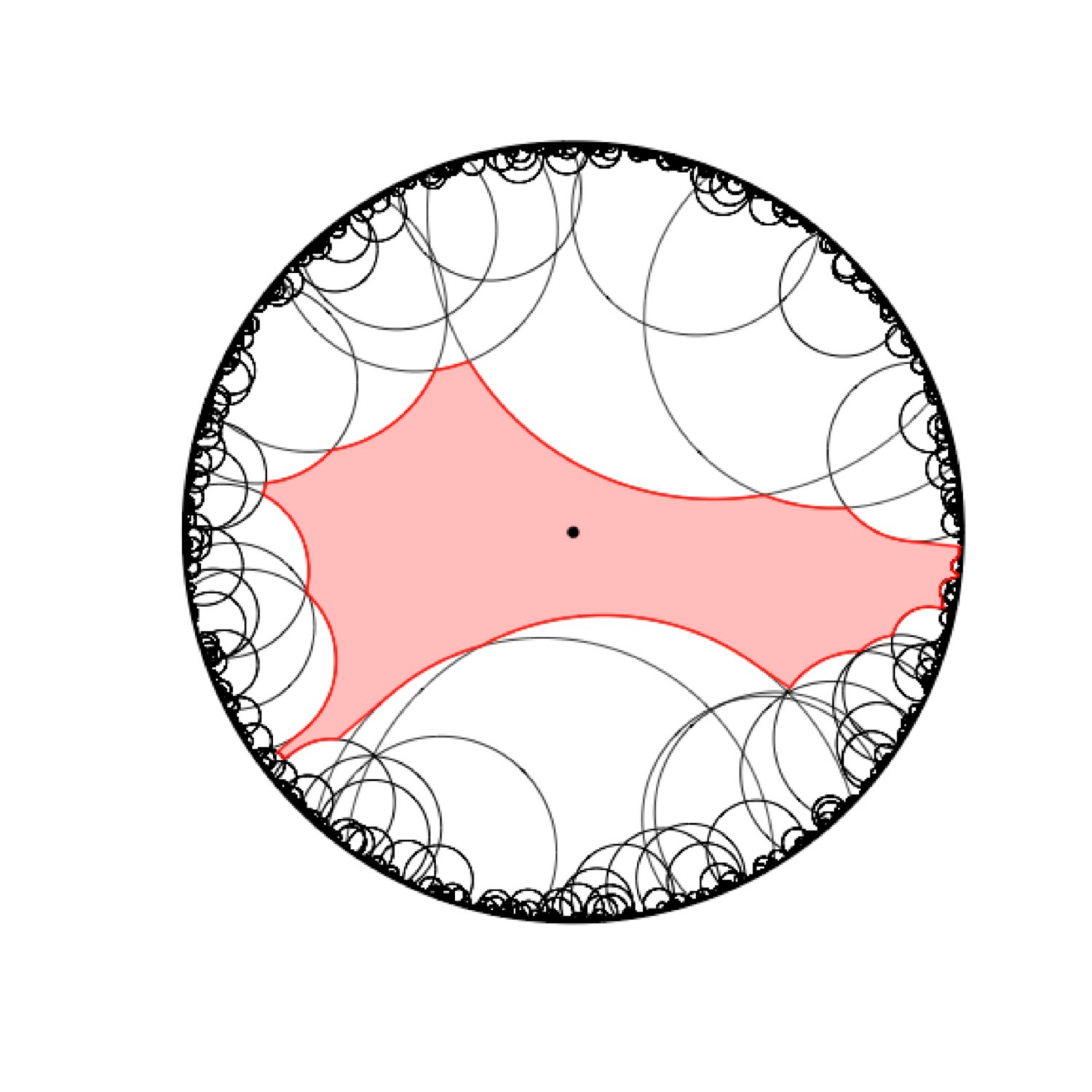}
			\subcaption{$\lambda=0.5, \gamma=7>\gamma_{\mathrm{crit}}$}
		\end{minipage}\quad
		\begin{minipage}{.3\linewidth}
			\hspace*{-0.4cm}\includegraphics[scale=0.09]{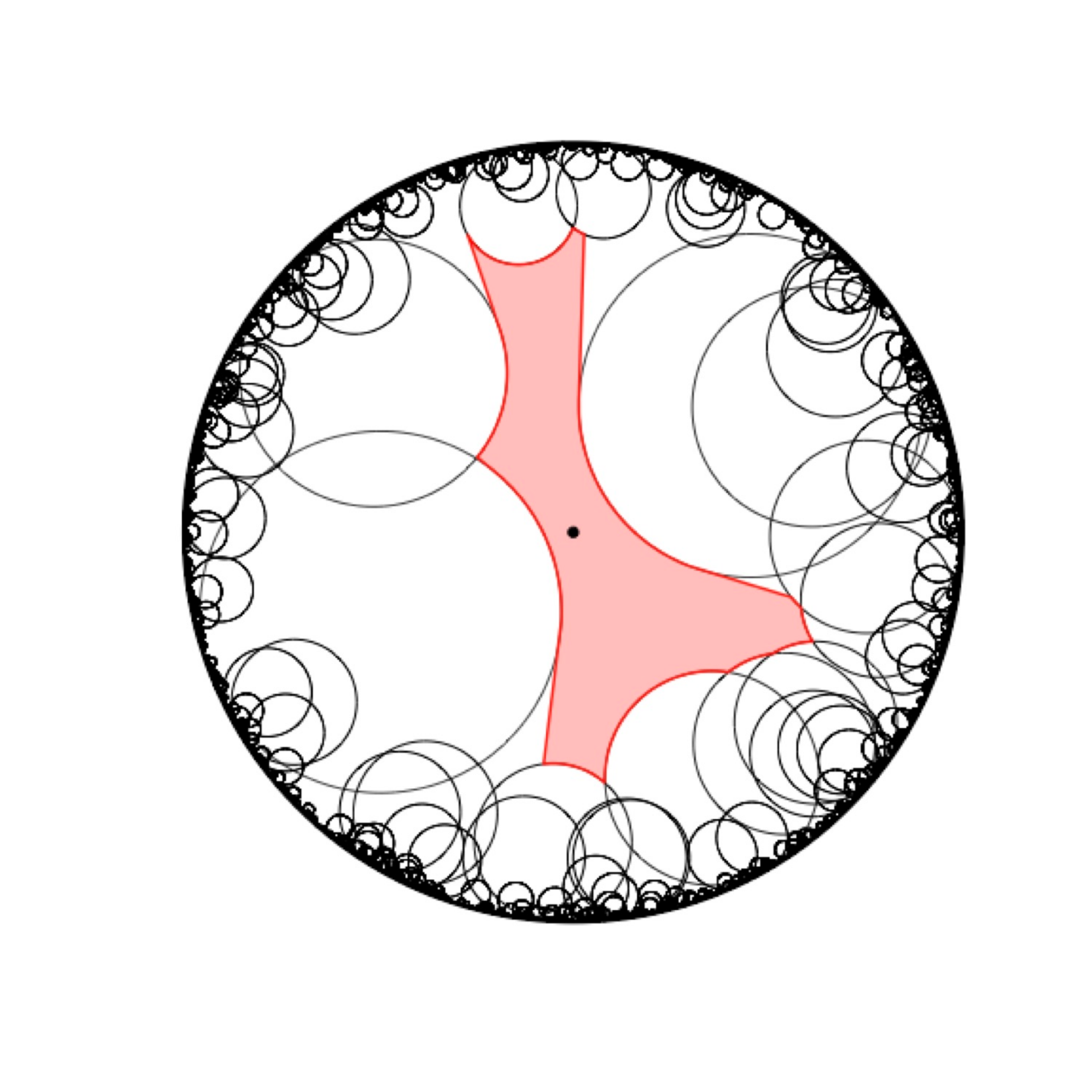}
			\subcaption{$\lambda=1, \gamma=7>\gamma_{\mathrm{crit}}$}
		\end{minipage}
		\caption{Simulation of a Poisson process of $\lambda$-geodesic hyperplanes in $\mathrm{Hyp}_\lambda^o$ with the corresponding visibility set in red in the Poincar\'e ball model. Note that for $d=2$, $\gamma_{\mathrm{crit}}=\pi$.}
		\label{fig: simulation}
	\end{figure}
	
	The case $\lambda = 0$, corresponding to totally geodesic hyperplanes, has been studied in detail in previous works \cite{BJST,BuehlerGusakovaRecke,BuehlerHugThaele,GKT22,PorretBlanc}.  These papers show that the visibility region $Z_{\gamma,0,d}$ may be unbounded, and that its behaviour undergoes a sharp phase transition as the intensity~$\gamma$ varies.  More precisely, there exists the critical value
	\begin{equation}\label{eq:GammaCrit}
	\gamma_{\mathrm{crit}}:= \sqrt{\pi}(d-1)^2{\Gamma({d-1\over 2})\over\Gamma({d\over 2})}
	\end{equation}
	such that
	\begin{itemize}
		\item if $\gamma < \gamma_{\mathrm{crit}}$, then
		$Z_{\gamma,0,d}$ is unbounded with strictly positive probability,
		\item if $\gamma \geq \gamma_{\mathrm{crit}}$, then
		$Z_{\gamma,0,d}$ is almost surely bounded.
	\end{itemize}
	The critical case $\gamma = \gamma_{\mathrm{crit}}$ was treated in \cite{GKT22,PorretBlanc} for $d=2$ and in \cite{BuehlerGusakovaRecke} for general dimensions. Thus, for small intensities, there remains a positive chance of seeing arbitrarily far from the origin, whereas for large intensities the random hypersurfaces almost surely form a finite ``cocoon'' around~$o$.  In the latter regime, the expected hyperbolic volume of the bounded visibility region has been computed in \cite[Theorem 7.1]{BuehlerHugThaele}, yielding an explicit expression in terms of the dimension~$d$ and the intensity parameter~$\gamma$.

	In the present work we investigate the same questions for general
	$\lambda$-geodesic hyperplanes.  One might expect the behaviour of the visibility region to depend sensitively on the parameter $\lambda$, since varying $\lambda$ interpolates between different geometric objects. Surprisingly, our results show that the situation is completely rigid with respect to~$\lambda$.  The critical intensity at which the transition from unbounded to bounded
	visibility occurs is \emph{identical} for all $\lambda \in [0,1]$, and in
	the bounded regime the expected volume of the visibility region
	$Z_{\gamma,\lambda,d}$ agrees exactly with the expression known from the case $\lambda = 0$.  In other words, neither the existence of infinite sightliness nor the size of the bounded visibility region depends in any way on the geometric nature of the obstructing hypersurfaces.  Our main findings are summarized in the following result.
	
	\begin{theorem}\label{thm:MainIntro}
	Fix $d\geq 2$, $\gamma>0$ and $0\leq\lambda\leq 1$. Consider the visibility region $Z_{\gamma,\lambda,d}$ of $o$ in a Poisson process of $\lambda$-geodesic hyperplanes in $\mathbb{H}^d$ with intensity measure $\gamma\nu_\lambda^o$.
	\begin{itemize}
	\item[(a)] Let $\gamma_{\mathrm{crit}}$ be as in \eqref{eq:GammaCrit}. If $\gamma<\gamma_{\mathrm{crit}}$, then $Z_{\gamma,\lambda,d}$ is unbounded with strictly positive probability, whereas if $\gamma>\gamma_{\mathrm{crit}}$, then $Z_{\gamma,\lambda,d}$ is almost surely bounded. Moreover, if $d=2$, then $Z_{\gamma_{\mathrm{crit}},\lambda,2}$ is almost surely bounded.
	
	\item[(b)] If $\gamma>\gamma_{\mathrm{crit}}$ then
	\begin{equation}\label{eq:MeanVolume}
		\mathbb{E}{\rm vol}_d(Z_{\gamma,\lambda,d})=\mathbb{E}{\rm vol}_d(Z_{\gamma,0,d}) = \pi^{\frac{d-1}{2}} \Gamma\left(\frac{d+1}{2}\right){\Gamma\big({\gamma_d^\ast-d+1\over 2}\big)\over\Gamma\big({\gamma_d^\ast+d+1\over 2}\big)}
	\end{equation}
	with $\gamma_d^\ast := \gamma\,{\Gamma({d\over 2})\over 2\sqrt{\pi}\Gamma({d+1\over 2})}$. On the other hand, $\mathbb{E}{\rm vol}_d(Z_{\gamma,\lambda,d})=+\infty$ for all $\gamma\leq\gamma_{\mathrm{crit}}$.
	\end{itemize}
	\end{theorem}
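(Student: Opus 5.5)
The plan is to reduce everything to the case $\lambda=0$ through one integral-geometric identity, the Crofton-type formula announced in the abstract. For a geodesic segment $[o,x]$ of length $r=d(o,x)$, I will show that
\[
\nu_\lambda^o\bigl(\{H\in \mathrm{Hyp}_\lambda^o : H\cap[o,x]\neq\varnothing\}\bigr)=c_d\, r, \qquad c_d=\frac{\Gamma(\frac d2)}{2\sqrt\pi\,\Gamma(\frac{d+1}2)}\cdot(\text{normalisation}),
\]
with a constant that does not depend on $\lambda$ and that agrees with the known $\lambda=0$ value.

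To prove this identity, parametrise $\lambda$-geodesic hyperplanes by a unit normal direction $u\in S^{d-1}$ at $o$ and a signed distance $s$ from $o$ along the geodesic in direction $u$. Under this parametrisation the invariant measure $\nu_\lambda$ takes an explicit density, $\cosh^{d-1}(s)-\lambda\sinh(s)\cosh^{d-2}(s)$ or similar, as derived in Section~\ref{sec:lambda-geodesic_hyperplanes}. Requiring that $o$ is not on the convex side restricts the range of $s$.

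Take $x$ along a fixed direction $e$ at distance $r$. Since $o$ is not on the convex side, the hyperplane $H$ meets $[o,x]$ if and only if $x$ lies on the closed convex side. This converts the event into an explicit inequality in $(u,s,r)$, obtained from the distance function to an equidistant or a horosphere. Rather than integrating directly, I will differentiate in $r$: the quantity $\frac{d}{dr}\nu_\lambda^o(\cdots)$ is the measure of hyperplanes passing through $x$, weighted by the cosine of the angle between the segment and the normal. By isometry invariance this derivative does not depend on $x$. That already gives linearity in $r$, because hyperplanes meeting $[o,x]$ meet it once (convexity of the convex side plus $o$ outside it). Proving that the constant is independent of $\lambda$ is the main obstacle. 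I would attack it by a Blaschke–Petkantschin-type local computation at $x$: near $x$, every $\lambda$-geodesic hyperplane through $x$ is determined by its unit normal $v$ and its orientation. The local flux density is $\int_{S^{d-1}}|\langle v,e\rangle|\,dv$ times the density of $\nu_\lambda$ at $s=0$ in the transversal variable, and after normalisation this equals $1$ for every $\lambda$. The $\lambda$-dependence appears only in the restriction to $\mathrm{Hyp}^o_\lambda$, which selects one orientation, namely the convex side ahead of the segment. That gives half of the two-sided count, exactly as in the geodesic case.

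With the identity available, the void probability is $\PP(x\in Z_{\gamma,\lambda,d})=e^{-\gamma c_d r}$, the same as for $\lambda=0$. Part (b) then follows from Fubini in polar coordinates:
\[
\E\mathrm{vol}_d(Z)=\omega_d\int_0^\infty e^{-\gamma c_d r}\sinh^{d-1}r\,dr .
\]
This is finite exactly when $\gamma c_d>d-1$, which is $\gamma>\gamma_{\mathrm{crit}}$, and a Beta-integral evaluation reproduces \eqref{eq:MeanVolume}. It is infinite otherwise.

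For part (a) I will reuse the $\lambda=0$ strategies, which rely only on the hitting probabilities of segments together with geometric estimates.

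\textbf{Supercritical phase.} Cover the sphere of radius $R$ by about $e^{(d-1)R}$ caps of bounded size. For each cap, bound the probability that some point in it is visible by the probability that a slightly shorter segment toward its centre is unhit, which is of order $e^{-\gamma c_d R}$. This requires a lemma saying that a $\lambda$-hyperplane crossing the segment $[o,y]$ also blocks nearby segments, up to an $O(1)$ loss; that holds uniformly in $\lambda$ because of the curvature bounds. Summing over caps, Borel–Cantelli gives boundedness.

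\textbf{Subcritical phase.} Apply the second moment method to $N_R=$ the number of visible boundary caps. Computing $\PP(x,y\in Z)$ requires the measure of hyperplanes hitting the union of $[o,x]$ and $[o,y]$. By inclusion–exclusion this equals $c_d(|x|+|y|)$ minus the measure of hyperplanes hitting both segments. That overlap is controlled by the same derivative formula applied along the common initial part, up to a distance of about the Gromov product $\log(1/\theta)$ where $\theta$ is the angle between the segments. Paley–Zygmund then gives a positive probability of unboundedness.

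\textbf{Criticality for $d=2$.} Here I would reduce the boundary circle to a one-dimensional problem and mimic the argument of \cite{GKT22,PorretBlanc}, which shows that the expected number of visible directions at scale $R$ stays bounded while the correlations force extinction. The additional input needed is the $\lambda$-independent overlap estimate described above.
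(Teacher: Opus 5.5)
There is a genuine gap in the Crofton step, and part~(a) rests on unproved lemmas and does not handle the critical case $d=2$.

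\textbf{The Crofton identity.} The step you treat as routine is exactly where $\lambda>0$ differs from $\lambda=0$. Convexity of the convex side $K_H$ only makes $K_H\cap[o,x]$ an interval. With $o\notin K_H$, that interval can lie strictly inside $[o,x]$; think of a horosphere nearly tangent to the segment at an interior point. Then $H$ crosses the segment twice while $x\notin K_H$. So ``$H$ meets $[o,x]$ iff $x\in\overline{K_H}$'' and ``hyperplanes meeting $[o,x]$ meet it once'' are both false. Taken literally, the measure of your event has $r$-derivative equal to the entering flux at $x$ minus the flux of $\mathrm{Hyp}^o_\lambda$-hyperplanes exiting at $x$ with entry point in $(o,x)$. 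That second term is not constant in $r$, so this measure is not linear.

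The fix is to count each hyperplane only at its first crossing $C(H)$ of the ray, which is the paper's device in Lemma~\ref{lem:Linear}. The rate of new hits at $x$ is then the flux of hyperplanes through $x$ whose convex side lies ahead. For these, $o\notin K_H$ holds automatically, so the rate is invariant under isometries translating the line. It is this first-crossing bookkeeping, not the restriction to $\mathrm{Hyp}^o_\lambda$, that selects the orientation. An isometry fixing $x$ and reversing the line shows that the ``ahead'' and ``behind'' fluxes agree. Their sum is $\int_{\mathbb{S}^{d-1}}|\langle u,e\rangle|\,\sigma_{d-1}(\mathrm{d}u)=\Gamma(\frac d2)/(\sqrt\pi\,\Gamma(\frac{d+1}2))$, because the density $(\cosh s+\lambda\sinh s)^{d-1}$ equals $1$ at $s=0$. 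With this repair, your local flux computation is a neat substitute for the explicit small-$h$ asymptotics of Lemma~\ref{lem:Slope}, and part~(b) then goes through as you describe.

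\textbf{Part~(a).} You drop the paper's reduction: the shadows on $\mathbb{S}^{d-1}$ are caps $\mathcal{S}^\circ(u,\varphi(r))$ with $1-\varphi(r)\sim(1-r)^2/(2(1+\lambda)^2)$, fed into Hoffmann-J\o rgensen's criterion. In its place you use first and second moment bounds on spheres of radius $R$. The thresholds would come out right, but everything rests on two lemmas you only assert:
\begin{itemize}
\item hyperplanes hitting $[o,y_{R-C}]$ but missing $[o,y']$ for some $y'$ in a unit cap have $\nu^o_\lambda$-measure $O(1)$ uniformly in $R$;
\item hyperplanes hitting both $[o,x]$ and $[o,y]$ have measure at most $c_d(x|y)_o+O(1)$.
\end{itemize}
For $\lambda>0$ these are exactly where near-tangential double crossings and the non-invariance of $\nu^o_\lambda$ matter. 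Appealing to ``curvature bounds'' does not prove them.

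\textbf{The critical case $d=2$.} This is not actually handled. A bounded first moment does not give $\mathbb{P}(N_R>0)\to0$. An upper bound on overlaps bounds $\mathbb{E}N_R^2$ from above, which is the wrong direction for proving extinction. The paper instead uses Shepp's criterion $\sum_n n^{-2}e^{\ell_1+\dots+\ell_n}=\infty$ (Proposition~\ref{prop:gamma=gamma_crit}). This involves only the arc lengths, and it needs $\ell_i=1/i+O(\sqrt{\log\log i/i^3})$ with a summable error. That estimate comes from the $O((1-r)^3)$ remainder in Lemma~\ref{lemma:asymptotics_varphi}, the explicit $F$ for $d=2$, and the law of the iterated logarithm for the Poisson arrivals. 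The first-order asymptotics $\ell_i\sim 1/i$ alone would not suffice, and nothing in your sketch plays this role.
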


	The proofs of the results in (a) and (b) rely on two complementary ingredients.
	For the existence of a phase transition, we adapt a covering criterion due to Hoffmann--J\o rgensen \cite{H73}. This method parallels the approach in \cite{GKT22} used in the case $\lambda = 0$, and it  shows that the critical threshold does not depend on the geometric nature of the
	hyperplanes, i.e., on $\lambda$. The concrete computations rely on a subtle cancellation, which eliminates the $\lambda$-dependence from the relevant density terms.
	The computation of the expected volume in the bounded phase reduces to an integral-geometric question: one must determine the measure $\nu_\lambda^o$ of $\lambda$--geodesic hyperplanes that intersect a fixed geodesic segment of length~$h>0$.
	For $\lambda = 0$, this quantity can be determined using the classical Crofton formula in hyperbolic space. In fact, if $x_h$ is a point with hyperbolic distance $h$ from $o$ we have
	\begin{equation}\label{eq:IndicatorToEulerChar}
	\int_{{\rm Hyp}_0^o}{\bf 1}\{H\cap[o,x_h]\neq\varnothing\}\,\nu_0^o(\dint H) = \int_{{\rm Hyp}_0^o}\chi(H\cap[o,x_h])\,\nu_0^o(\dint H) = {\Gamma({d\over 2})\over 2\sqrt{\pi}\Gamma({d+1\over 2})}\,h,
	\end{equation}
	where $\chi$ denotes the Euler characteristic, see \cite[Section 7]{BuehlerHugThaele}.  However, the Crofton argument breaks down for $\lambda>0$, since a
	$\lambda$--geodesic hyperplane may intersect a geodesic segment either once or twice. This implies that for all such $\lambda$ the indicator function cannot be replaced by the Euler characteristic and it seems that no direct integral-geometric relation is available.  We overcome this  by introducing an explicit parametrization of $\lambda$--geodesic hyperplanes and carrying out the computation directly.  The resulting expression turns out to be independent of~$\lambda$ and again proportional to the length~$h$. This invariance is a surprising integral-geometric fact of independent interest and is ultimately responsible for the $\lambda$--independence of the expected volume as described in Theorem \ref{thm:MainIntro}.
	
	Our findings are placed within the broader context of stochastic geometry in hyperbolic space, a field that has seen significant recent activity. This area encompasses a rich variety of random structures adapted to negative curvature. A selection of recent results includes the asymptotic geometry of random polytopes \cite{BesauThaele,FodorGruenfelder}, the investigation of the Boolean model \cite{BJST,BuehlerHugThaele,HugLastSchulte,Tykesson} and the study of hyperbolic random graphs, which connects geometric probability with network science \cite{BlaesiusETal,FountulakisEtal,KrioukovEtal}. The investigation of Poisson point processes and their associated Voronoi tessellations remained a central theme, with foundational works including \cite{Isokawa}. A particular focus has recently developed around ideal Poisson--Voronoi tessellations and their applications, where the generating points reside on the boundary at infinity \cite{Achille,AchilleEtal,BudzinskiEtal,Fraczyk}. Our work complements this diverse landscape by focusing on the geometry induced by Poisson processes of $\lambda$-geodesic hyperplanes, uncovering a surprising universality in visibility properties.
	
	\medspace
	
	The paper is organized as follows. Section \ref{sec:Background} collects the necessary background on hyperbolic geometry to make the paper self-contained. In Section \ref{sec:Visibility}, we establish the visibility phase transition, proving Theorem \ref{thm:MainIntro}(a). Section \ref{sec:IntegralGeometrySegment} is dedicated to the core integral-geometric mechanism: we prove that the Crofton-type identity \eqref{eq:IndicatorToEulerChar} holds universally for all $\lambda\in[0,1]$. Finally, we apply this result in Section \ref{sec:ExpectedVolume} to derive the mean visible volume formula in Theorem \ref{thm:MainIntro}(b).
	
	\section{Background material}\label{sec:Background}
	
	\subsection{Hyperbolic space and the Poincar\'e ball model}

	For $d \in \mathbb{N}$ with $d \geq 2$, let $\mathbb{H}^d$ denote the
	$d$--dimensional hyperbolic space of constant curvature $-1$.
	One convenient realisation of $\mathbb{H}^d$ is given by the
	Poincar\'e ball model, in which the underlying space is the open unit ball
	$\mathbb{B}^d := \{\, y \in \mathbb{R}^d : \|y\| < 1 \,\}$, equipped with the
	Riemannian metric $\frac{4}{(1 - \|y\|^2)^2}\, \mathrm{d}y^2$.
	In this model, the geodesics are precisely the intersections with $\mathbb{B}^d$
	of Euclidean circles and lines that meet the boundary
	$\partial\mathbb{B}^d=\mathbb{S}^{d-1}$ orthogonally.
	
	The hyperbolic distance between the origin and a point $y \in \mathbb{B}^d$
	is given explicitly by
	\[
	\mathrm{d}_{\mathrm{P}}(0,y)
	= 2 \operatorname{artanh}(\|y\|).
	\]
	More generally, for $x,y \in \mathbb{B}^d$ the
	hyperbolic distance satisfies
	\[
	\cosh\bigl( \mathrm{d}_{\mathrm{P}}(x,y) \bigr)
	= 1 + \frac{2\|x-y\|^2}{(1-\|x\|^2)(1-\|y\|^2)}.
	\]
	
	A totally geodesic hyperplane in $\mathbb{H}^d$ is a $(d-1)$-dimensional submanifold such that any geodesic contained in the submanifold is also a geodesic of the ambient space. In the Poincar\'e ball model, these hyperplanes are represented by the intersection of $\mathbb{B}^d$ with Euclidean $(d-1)$-spheres or Euclidean hyperplanes that intersect the boundary $\mathbb{S}^{d-1}$ orthogonally. Another important class of hypersurfaces consists of  horospheres. Intuitively, these can be viewed as the limits of hyperbolic spheres as their radii tend to infinity. In the Poincar\'e ball model, a horosphere is realized as a Euclidean sphere contained in the closure of $\mathbb{B}^d$ that is tangent to the boundary $\mathbb{S}^{d-1}$ at exactly one point (excluding the point of tangency itself).

	The Poincar\'e ball model for hyperbolic space will be our main
	tool for the geometric computations appearing later in the paper.

	\subsection{$\lambda$-geodesic hyperplanes}
	\label{sec:lambda-geodesic_hyperplanes}
	Let $0\leq\lambda\leq 1$. Following \cite{Solanes}, a
	$\lambda$--geodesic hyperplane is a complete totally umbilical hypersurface in $\mathbb{H}^d$ with normal curvature $\lambda $. In the Poincar\'e ball model, $\lambda$-geodesic hyperplanes are of the form $S\cap \mathbb{B}^d$, where $S$ is a $(d-1)$-dimensional sphere in $\mathbb{R}^d$ intersecting $\mathbb{S}^{d-1}$ at angle $\theta$ with $\cos(\theta)=\lambda$. In particular, $\lambda=0$ corresponds to the case of totally geodesic hyperbolic hyperplanes and $\lambda=1$ to the case of horospheres.
	Let $\mathrm{Hyp}_\lambda$ denote the space of $\lambda$-geodesic hyperplanes. For a fixed origin $o\in\mathbb{H}^d$, an element $H\in\mathrm{Hyp}_\lambda$ can be parametrised by a pair $(s,u)\in\mathbb{R}\times\mathbb{S}^{d-1}$, where $s\in\mathbb{R}$ is the signed distance from the origin $o$ to $H$ and $u$ is the unit vector in the tangent space of $\mathbb{H}^d$ at $o$ along the geodesic through $o$ intersecting $H$ orthogonally, pointing outside of the convex side. In the Poincar\'e ball model, if $o$ is the centre of $\mathbb{B}^d$, then $u\in\mathbb{S}^{d-1}$ is the direction which is orthogonal to $H$. The sign of $s$ is chosen in such a way that $s<0$ if $o$ lies on the convex side of $H$. The parametrised version of $H$ is denoted by $H_h(s,u)$.
	
	As argued in \cite{Solanes}, there exists an isometry-invariant measure $\nu_\lambda$ on $\mathrm{Hyp}_\lambda$, which is unique up to normalization. Following \cite{KRT22}, we choose the normalization in such a way that
	\begin{align}\label{eq:InvariantMeasure}
		\int_{\mathrm{Hyp}_\lambda} f(H)\; \nu_\lambda(\mathrm{d} H)=\int_{\mathbb{R}}\int_{\mathbb{S}^{d-1}}f(H_h(s,u))(\cosh(s)+\lambda\sinh(s))^{d-1}\;\sigma_{d-1}(\mathrm{d}u)\;\mathrm{d}s,
	\end{align}
	for all measurable functions $f\colon\mathrm{Hyp}_\lambda\to\mathbb{R}_{\geq 0}$,
	where $\sigma_{d-1}$ denotes the normalised surface measure on $\mathbb{S}^{d-1}$ and $\mathrm{d}s$ refers to the integration with respect to the Lebesgue measure on $\mathbb{R}$. Note that in contrast to \cite{KRT22}, we choose the sign of $s$ differently so that the invariant measure is of a slightly different form here.
	
	As we are not interested in all $\lambda$-geodesic hyperplanes, we further introduce the space $\mathrm{Hyp}_\lambda^o$ of all $\lambda$-geodesic hyperplanes for which $o$ does not lie on their convex side, i.e., for which $s>0$ in the previously mentioned parametrization. In this paper, $\mathrm{Hyp}_\lambda^o$ will be equipped with the measure $\nu_\lambda^o$ defined as the restriction of $\nu_\lambda$ to $\mathrm{Hyp}_\lambda^o$, i.e.,
	\begin{align*}
		\nu_\lambda^o(\,\cdot\,)=\int_0^\infty\int_{\mathbb{S}^{d-1}}{\bf 1}{\{H_h(s,u)\in\,\cdot\,\}}(\cosh(s)+\lambda\sinh(s))^{d-1}\;\sigma_{d-1}(\mathrm{d}u)\;\mathrm{d}s.
	\end{align*}

	\subsection{A Poisson process on the space of $\lambda$-geodesic hyperplanes}
	
	For $\gamma>0$ let $\eta_{\gamma,\lambda}$ be a Poisson process on $\mathrm{Hyp}_\lambda^o$ with intensity measure $\mu_{\gamma,\lambda}=\gamma\nu_\lambda^o$, see \cite[Chapter 3]{SW08} for the definition of Poisson processes on general state spaces. We want to represent this process in the Poincar\'e ball model. To pass to this model, we set $o$ to be the centre of the ball $\mathbb{B}^d$ and $r=\tanh\big(\frac{s}{2}\big)$ for $s\geq 0$. In other words, if a point has hyperbolic distance $s$ to $o$, then its Euclidean distance to $o$ is $r$. Accordingly, we denote a $\lambda$-geodesic hyperplane $H_h(s,u)$ as defined in the previous subsection by $H(r,u)$ in this model. This notation refers to the same geometric object, but the first coordinate is now the Euclidean distance $r \in [0,1)$ given by $r=\tanh(s/2)$.
	Then, we use that for $x\in\mathbb{R}$, $\sinh(2x) = 2 \sinh(x)\cosh(x)$ and $\cosh(2x) = 2\cosh^2(x) - 1$. Together with the relations
	\[
	\cosh(\atanh(x)) = \frac{1}{\sqrt{1 - x^2}} \qquad \text{and} \qquad\sinh(\atanh(x)) = \frac{x}{\sqrt{1 - x^2}}\quad
	\]
	for $x\in(0,1)$, we get
	\[
	\cosh(2  \atanh(r)) = 2 \cosh^2(\atanh(r)) - 1 = \frac{2}{1 - r^2} - 1 = \frac{1 + r^2}{1 - r^2}
	\]
	and
	\[
	\sinh(2 \atanh(r)) = 2 \sinh(\atanh(r)) \cosh(\atanh(r)) = \frac{2r}{1 - r^2}.
	\]
	for $r\in(0,1)$.
	Since $\mathrm{d}(2\atanh(r)) = \frac{2\;\mathrm{d} r}{1 - r^2} $, it follows that the intensity measure $\mu_{\gamma,\lambda}$ can be written as
	\begin{align}
		\mu_{\gamma,\lambda}(\,\cdot\,)&=\gamma\int_{0}^{1}\int_{\mathbb{S}^{d-1}}{\bf 1}{\{H(r,u) \in \,\cdot\,\}} \left( \frac{1 + r^2}{1 - r^2} +\lambda \cdot \frac{2r}{1 - r^2} \right)^{d-1}\cdot \frac{2}{1 - r^2}\;\sigma_{d-1}(\mathrm{d}u)\mathrm{d}r\nonumber\\
		&=\gamma\int_{0}^{1}\int_{\mathbb{S}^{d-1}}{\bf 1}{\{H(r,u) \in \,\cdot\,\}} \frac{2(1 + 2\lambda r + r^2)^{d-1}}{(1 - r^2)^d}\;\sigma_{d-1}(\mathrm{d}u)\mathrm{d}r. \label{eq:invariant_measure_poincare_disc_model}
	\end{align}

		\section{Visibility to infinity}\label{sec:Visibility}
		
	In the following we study the question whether, with positive probability, there exists an infinite geodesic ray emanating from $o$, which does not intersect any of the $\lambda$-geodesic hyperplanes of $\eta_{\gamma,\lambda}$. Note that, looking from $o$ in the direction of some $\lambda$-geodesic hyperplane, this $\lambda$-geodesic hyperplane covers a part of $\mathbb{S}^{d-1}$. Thus, the question raised above is equivalent to the question whether the shadows of the $\lambda$-geodesic hyperplanes of $\eta_{\gamma,\lambda}$ on $\mathbb{S}^{d-1}$, the boundary of the Poincar\'e ball $\mathbb{B}^d$, cover the whole sphere. In what follows, we choose $o$ to be the centre of $\mathbb{B}^d$.
	
	\subsection{Covering with spherical caps}\label{sec:Covering}
	
	 We start with analysing which part of $\mathbb{S}^{d-1}$ is covered by a fixed $\lambda$-geodesic hyperplane in the Poincar\'e ball model. Recall at first that every $\lambda$-geodesic hyperplane $H(r,u)$ in the Poincar\'e model is of the form $S\cap \mathbb{B}^d$, where $S$ is a $(d-1)$-dimensional sphere intersecting $\mathbb{S}^{d-1}$ at angle $\theta=\arccos(\lambda)$, $u\in\mathbb{S}^{d-1}$ is the unit vector intersecting $S$ orthogonally and $r$ is the Euclidean distance from $o$ to $S$. Thus, $H(r,u)$ covers a closed spherical cap. The corresponding open spherical cap, which we aim to analyze, is of the form
	 $$
	 \mathcal{S}^\circ(u,\varphi(r)):=\{x\in\mathbb{S}^{d-1}:\langle x, u\rangle>\varphi(r)\},
	 $$ where $\varphi:(0,1)\to(0,1)$ is a suitable function depending only on $r$ and $\lambda$. We start by making $\varphi$ explicit and refer to Figure \ref{fig: configuration_caps} for a geometric illustration in dimension $2$.
	 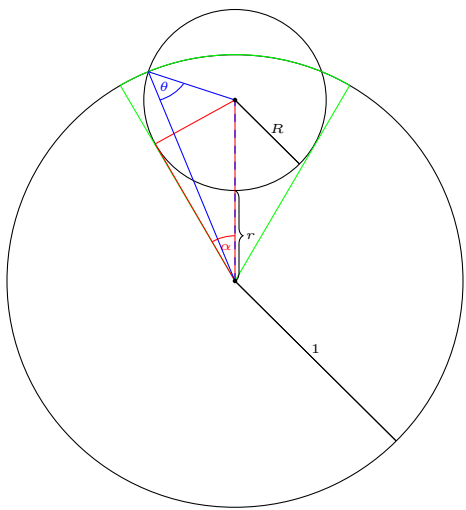
\begin{figure}
	 	\centering
	 		\begin{tikzpicture}[scale=3]
	 		
	 		\coordinate (O) at (0,0); 
	 		\coordinate (C) at (0,0.8); 
	 		\draw (O) circle(1); 
	 		\draw (C) circle(0.4); 
	 		
	 		\draw (O) -- ++(-45:1) coordinate (P);
	 		\draw (O) -- (P);
	 		\node[above] at ($(O)!0.5!(P)$) {\tiny$1$};
	 		
	 		\draw[green] (O) -- ++(120.2:1) coordinate (P);
	 		\draw[green] (O) -- ++(59.8:1) coordinate (P);
	 		\draw[green] (0.503,0.865) arc[start angle=59.8, end angle=120.2, radius=1cm] ;

	 		\draw (C) -- ++(-45:0.4) coordinate (Rpt);
	 		\draw (C) -- (Rpt);
	 		\node[above] at ($(C)!0.65!(Rpt)$) {\tiny$R$};
	 		
	 		\draw[red] (0,0)--(0,0.8);
	 		\draw[red] (-0.35,0.606)--(0,0.8);
	 		\draw[red] (0,0)--(-0.35,0.606);
	 		\node[above, red] at (-0.04,0.1) {\tiny$\alpha$};
	 		\draw[red] (0,0.2) arc[start angle=90, end angle=120.2, radius=0.2cm] ;
	 		
	 		\draw[blue,dashed] (0,0)--(0,0.8);
	 		\draw[blue] (-0.38,0.925)--(0,0.8);
	 		\draw[blue] (0,0)--(-0.38,0.925);
	 		\node[below, blue] at (-0.31,0.92) {\tiny$\theta$};
	 		\draw[blue] (-0.33,0.8) arc[start angle=-74, end angle=-37, radius=0.2cm] ;
	 		
	 		\draw[decorate,decoration={brace,amplitude=3pt},xshift=0pt,yshift=0pt] (0,0.4) -- (0,0);
	 		\node[right] at (0.01,0.2) {\tiny$r$};
	 		
	 		\fill (O) circle(0.01);
	 		\fill (C) circle(0.01);
	 		
	 	\end{tikzpicture}
	 	\caption{We consider a $\lambda$-geodesic hyperplane of Euclidean distance $r$ to $o$ realized by the intersection of $\mathbb{B}^2$ with the black sphere of radius $R$, where $R$ depends on $r$ and $\lambda=\cos(\theta)$. This $\lambda$-geodesic hyperplane covers a part of $\mathbb{S}^1$ illustrated in green. In order to describe this green cap we aim to determine $\varphi(r)=\cos(\alpha)$. To this end, we use the red and the blue triangle. The red triangle provides $\sin(\alpha)=\frac{R}{R+r}$. Applying the law of cosine to the blue triangle leads to $2R\cos(\theta)=R^2+1-(R+r)^2$. }
	 	\label{fig: configuration_caps}
	 \end{figure}
	
	 Let $R$ denote the radius of $S$. Then,
	 $$
	 \varphi(r)=\cos\Big(\arcsin\Big(\frac{R}{R+r}\Big)\Big),
	 $$
	 compare with the red triangle in Figure \ref{fig: configuration_caps}. Hence, it remains to express $R$ in terms of $r$ and $\theta$. Using the law of cosine for a triangle that has $0$, the centre of $S$ and an arbitrary intersection point of $S$ and $\mathbb{B}^d$ as vertices leads to $\lambda=\cos(\theta)=\frac{R^2+1-(R+r)^2}{2R}$, see the blue triangle in Figure \ref{fig: configuration_caps}. Thus, $R=\frac{1-r^2}{2(\lambda+r)}$.
	Altogether, we conclude that
	\begin{align}
		\varphi(r)&=\cos\Bigg(\arcsin\Bigg(\frac{\frac{1-r^2}{2(\lambda+r)}}{\frac{1-r^2}{2(\lambda+r)}+r}\Bigg)\Bigg)=\cos\Big(\arcsin\Big(\frac{1-r^2}{1+r^2+2r\lambda}\Big)\Big)\nonumber\\&=\sqrt{1-\Big(\frac{1-r^2}{1+r^2+2r\lambda}\Big)^2}=\frac{2\sqrt{r(\lambda+r)(1+\lambda r)}}{1+2\lambda r+r^2}.\label{eq:varphi(r)}
	\end{align}
	
	In the following lemma we derive the asymptotic behaviour of $\varphi(r)$ as $r\uparrow 1$.
	
	\begin{lemma}\label{lemma:asymptotics_varphi}
		Let $\varphi:(0,1)\to(0,1)$ be defined as in \eqref{eq:varphi(r)}. Then, as $r\uparrow 1$,
		\begin{align*}
			1-\varphi(r)
			&=\frac{(1-r)^2}{2(1+\lambda)^2}
			+\frac{(1-r)^3}{2(1+\lambda)^2}
			+o\bigl((1-r)^3\bigr).
		\end{align*}
	\end{lemma}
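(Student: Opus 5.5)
The plan is to avoid expanding the square root in \eqref{eq:varphi(r)} directly. Instead I will use the identity
\[
1-\varphi(r)=\frac{1-\varphi(r)^2}{1+\varphi(r)},
\]
together with the intermediate expression from \eqref{eq:varphi(r)}:
\[
1-\varphi(r)^2=\Big(\frac{1-r^2}{1+2\lambda r+r^2}\Big)^2 .
\]
This turns the problem into a Taylor expansion of a rational function, followed by a simple correction from the denominator $1+\varphi(r)$.

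First I substitute $t:=1-r$, so that $t\downarrow 0$. The numerator becomes $1-r^2=t(2-t)$. For the denominator,
\[
1+2\lambda(1-t)+(1-t)^2=2(1+\lambda)-2(1+\lambda)t+t^2=2(1+\lambda)(1-t)+t^2 .
\]
Since $\lambda\in[0,1]$, the factor $1+\lambda$ stays bounded away from $0$, so the expansion is uniform in $\lambda$. This gives
\[
1-\varphi^2=\frac{t^2(2-t)^2}{4(1+\lambda)^2(1-t)^2\big(1+\tfrac{t^2}{2(1+\lambda)(1-t)}\big)^2}.
\]
Expanding to first order in $t$ yields $(2-t)^2=4(1-t+O(t^2))$, $(1-t)^{-2}=1+2t+O(t^2)$, and a last factor equal to $1+O(t^2)$. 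Hence
\[
1-\varphi^2=\frac{t^2}{(1+\lambda)^2}\big(1+t+O(t^2)\big).
\]

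Next I handle the denominator $1+\varphi$. The previous step gives $1-\varphi^2=O(t^2)$, and $\varphi\in(0,1)$, so $1-\varphi\le 1-\varphi^2=O(t^2)$. Therefore $1+\varphi=2-(1-\varphi)=2+O(t^2)$, and
\[
\frac{1}{1+\varphi}=\frac12\big(1+O(t^2)\big).
\]
Multiplying the two expansions gives
\[
1-\varphi(r)=\frac{t^2}{2(1+\lambda)^2}+\frac{t^3}{2(1+\lambda)^2}+O(t^4),
\]
which is the claim with $t=1-r$, since $O(t^4)=o(t^3)$.

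The only delicate point is the bookkeeping needed to identify the coefficient of $t^3$ correctly. It comes from $-t$ in $(2-t)^2/4$ and $+2t$ in $(1-t)^{-2}$, and the $t^2$ term in the denominator $D$ must be checked not to contribute at this order. The bootstrap step showing $1+\varphi=2+O(t^2)$ is what guarantees that the denominator $1+\varphi$ adds nothing at order $t^3$.
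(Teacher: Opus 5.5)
Your proof is correct and is essentially the paper's argument. The paper writes $1-\varphi=g(f(r))$ with $f(r)=\frac{1-r^2}{1+2\lambda r+r^2}$ and $g(y)=1-\sqrt{1-y^2}=\frac{y^2}{2}+O(y^4)$. Your identity $1-\varphi=\frac{1-\varphi^2}{1+\varphi}$ is the rationalized form $g(y)=\frac{y^2}{1+\sqrt{1-y^2}}$, with the bound $1+\varphi=2+O(t^2)$ replacing the Taylor expansion of the square root, and you expand $f^2$ directly instead of expanding $f$ to second order and then squaring.
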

	\begin{proof}
		Let $f\colon[0,1]\to[0,1]$ and $g\colon[0,1]\to[0,1]$ be given by
		\[
		f(r):=\frac{1-r^2}{1+r^2+2r\lambda},
		\qquad
		g(r):=1-\sqrt{1-r^2},
		\]
		and set $h:=g\circ f=1-\varphi$.
		Write $r=1-\varepsilon$ with $\varepsilon\downarrow 0$. Then
		\begin{align*}
			f(1-\varepsilon)
			&=\frac{2\varepsilon-\varepsilon^2}{2(1+\lambda)-(2+2\lambda)\varepsilon+\varepsilon^2}
			=\frac{\varepsilon}{1+\lambda}+\frac{\varepsilon^2}{2(1+\lambda)}+O(\varepsilon^3).
		\end{align*}
		Further, using the Taylor expansion of $\sqrt{1-x}$ at $x=0$ and substituting $x=y^2$, we obtain
		\begin{align*}
			g(y)
			=1-\sqrt{1-y^2}
			=\frac{y^2}{2}+O(y^4),
			\qquad y\downarrow 0.
		\end{align*}
		Combining the last two displays yields
		\begin{align*}
			1-\varphi(1-\varepsilon)
			=h(1-\varepsilon)
			=g\bigl(f(1-\varepsilon)\bigr)
			&=\frac12\,f(1-\varepsilon)^2+O\bigl(f(1-\varepsilon)^4\bigr)\\
			&=\frac12\left(\frac{\varepsilon}{1+\lambda}+\frac{\varepsilon^2}{2(1+\lambda)}+O(\varepsilon^3)\right)^2+O(\varepsilon^4)\\
			&=\frac{\varepsilon^2}{2(1+\lambda)^2}+\frac{\varepsilon^3}{2(1+\lambda)^2}+o(\varepsilon^3).
		\end{align*}
		Since $\varepsilon=1-r$, this proves the claim.
	\end{proof}

%
	
	\subsection{Criterion for visibility}
	
	For $u\in\mathbb{S}^{d-1}$ and $h\geq 0$ let $\mathcal{S}^\circ(u,h)=\{x\in\mathbb{S}^{d-1}:\langle x, u\rangle>h\}$ be the open spherical cap centred at $u$ with height $h$ and denote by $\mathcal{S}(u,h)$ its closure.
	The following theorem is taken from \cite[Theorem 4.1]{GKT22}. It is a reformulation of a result of Hoffmann-Jørgensen, see \cite[Formulas (3.10), (5.3) and (5.5)]{H73}.
	
	\begin{prop}\label{thm: Hoffmann_Jorgensen}
		Let $u_1,u_2,\ldots$ be independent and uniformly distributed on $\mathbb{S}^{d-1}$. Let the sequence $h_1, h_2, \ldots$ of real numbers in $(0,1)$ be such that
		\begin{align*}
		\lim_{n \to \infty} n \sigma_{d-1}(\mathcal{S}^\circ(u_n, h_n)) = a \in [0, \infty].
	\end{align*}
		\begin{itemize}
			\item[(a)] If $a > 1$, then $\displaystyle\mathbb{P}[\limsup_{n\to\infty} \mathcal{S}^\circ(u_n, h_n) = \mathbb{S}^{d-1}] = 1$, i.e., with probability one, each point of the sphere is covered infinitely often by the caps.
			\item[(b)] If $a < 1$, $\displaystyle\mathbb{P}[\limsup_{n\to\infty} \mathcal{S}^\circ(u_n, h_n) = \mathbb{S}^{d-1}] = 0$ and $\displaystyle\mathbb{P}\big[\bigcup_{n=1}^\infty \mathcal{S}(u_n, h_n) \neq \mathbb{S}^{d-1}\big] > 0$. In particular, the probability that there exist points on the sphere that are not covered by the closed caps is positive.
		\end{itemize}
	\end{prop}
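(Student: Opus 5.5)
Since the proposition is quoted from \cite[Theorem 4.1]{GKT22}, which in turn repackages \cite{H73}, one could simply cite it. To make the argument self-contained I would prove both parts directly, using a discretisation (net) argument for (a) and a second-moment argument for (b). Throughout, set $p_n:=\sigma_{d-1}(\mathcal{S}^\circ(u_n,h_n))$, so that $p_n\sim a/n$. Let $\rho_n$ be the angular radius of the $n$-th cap, so that $\rho_n^{d-1}\asymp p_n$. For a fixed point $x\in\mathbb{S}^{d-1}$, the events $\{x\in\mathcal{S}^\circ(u_n,h_n)\}$ are independent with probabilities $p_n$. Hence
\[
\mathbb{P}[x\notin \textstyle\bigcup_{n=N}^M\mathcal{S}^\circ(u_n,h_n)]=\prod_{n=N}^M(1-p_n)\approx (N/M)^{a}.
\]
This power law is the only quantitative input; everything else is comparing it with how many points are needed to control the whole sphere.

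\emph{Part (a).} Fix $N$ and a small $\delta>0$, and choose $M\ge N$.
\begin{itemize}
\item Replace each cap $n\in\{N,\dots,M\}$ by the concentric cap of angular radius $\rho_n-\delta\rho_M$. Since $\rho_n\ge\rho_M$, this shrunk cap has normalised measure at least $(1-\delta)^{d-1}p_n(1+o(1))$.
\item Take a $\delta\rho_M$-net $\mathcal{N}_M$ of the sphere. It has cardinality $\#\mathcal{N}_M\asymp (\delta\rho_M)^{-(d-1)}\asymp M/\delta^{d-1}$.
\item If every net point lies in some shrunk cap, then every point of the sphere lies in the corresponding original open cap.
\end{itemize}
By the union bound, the expected number of net points missed by all shrunk caps is at most
\[
C\delta^{-(d-1)}M\,(N/M)^{a(1-\delta)^{d-1}},
\]
which tends to $0$ as $M\to\infty$ once $\delta$ is chosen with $a(1-\delta)^{d-1}>1$; this is possible precisely because $a>1$. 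Hence, almost surely, the caps with indices $n\ge N$ cover $\mathbb{S}^{d-1}$, and this holds for every $N$. Intersecting over $N$ gives $\limsup_n\mathcal{S}^\circ(u_n,h_n)=\mathbb{S}^{d-1}$ almost surely. The cases $a=\infty$ and $h_n$ not tending to $1$ are handled the same way, or by monotonicity.

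\emph{Part (b).} First, the event $\{\limsup_n \mathcal{S}^\circ(u_n,h_n)=\mathbb{S}^{d-1}\}$ is a tail event, so by Kolmogorov's $0$--$1$ law its probability is $0$ or $1$. It therefore suffices to show that, for some $N$, the closed caps with indices $n\ge N$ fail to cover the sphere with positive probability. To this end, let $V_M$ be the set of points not covered by the closed caps with indices $N\le n\le M$, and set
\[
X_M:=\sigma_{d-1}\bigl(\{x:\ \mathrm{dist}(x,V_M)\le c\rho_M\}\bigr)\cdot M^{a}.
\]
Alternatively, one can work with $Y_M:=\#(\mathcal{N}_M\cap V_M)\cdot M^{a-1}$. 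The first moment satisfies $\mathbb{E} Y_M\asymp N^{a}$.

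The key step, and the main obstacle, is the second-moment bound $\mathbb{E} Y_M^2\le C(\mathbb{E} Y_M)^2$ uniformly in $M$. This requires estimating, for two net points at angular distance $t$, the probability that both are uncovered:
\[
\prod_{n}\bigl(1-2p_n+q_n(t)\bigr),
\]
where $q_n(t)$ is the measure of the intersection of two caps of radius $\rho_n$ centred at the two points. For $t\ge 2\rho_n$ the intersection term vanishes. Summing over pairs, one obtains a bound of the form
\[
\sum_{t}(\#\text{pairs at distance }t)\,(N/M)^{2a}\,(M/m(t))^{a}\ \lesssim\ M^{2}(N/M)^{2a}\sum_k 2^{k(a-1)}\cdot(\text{geometric factor}),
\]
where $m(t)$ is the index at which $\rho_{m(t)}\approx t$. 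This is summable precisely because $a<1$. The delicate parts are making the overlap estimate $q_n(t)$ uniform in $n$ and handling the boundary indices.

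Paley--Zygmund then gives $\mathbb{P}[Y_M>0]\ge c>0$ uniformly in $M$. Since the sets $V_M$ are compact and decreasing in $M$, it follows that $\mathbb{P}[\bigcap_M V_M\neq\varnothing]\ge c$. Finally, for the statement about all closed caps including indices $n<N$: conditionally on the positions of caps $N,\dots$, the finitely many caps $1,\dots,N-1$ miss a given point with positive probability, because $h_n\in(0,1)$. Combining this with the previous step yields $\mathbb{P}[\bigcup_{n\ge1}\mathcal{S}(u_n,h_n)\neq\mathbb{S}^{d-1}]>0$.
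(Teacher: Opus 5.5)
\paragraph{Comparison with the paper.} The paper does not prove this proposition. It quotes it from \cite[Theorem 4.1]{GKT22}, i.e.\ from Hoffmann-J\o rgensen's general covering theorem. Your self-contained route (a union bound over a net for (a), and the $0$--$1$ law plus a second-moment argument for (b)) is therefore genuinely different. Part (a) is fine up to bookkeeping. First, $\rho_n$ is only asymptotically monotone, so use $\rho_n\ge(1-\varepsilon)\rho_M$ for $N\le n\le M$ with $N$ large. Second, take the net with mesh $\delta\rho_M/2$ so that you land in the \emph{open} cap rather than its closure.

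\paragraph{The compactness step in (b) fails as written.} The set $V_M$ of points not covered by the \emph{closed} caps with $N\le n\le M$ is open, not compact. Hence $\mathbb{P}[V_M\neq\varnothing]\ge c$ for all $M$ does not give a point of $\bigcap_M V_M$. If you replace $V_M$ by the compact set of points outside the \emph{open} caps, the finite-intersection argument works. However, it then only produces a point outside all open caps, which is weaker than the claim about $\bigcup_n\mathcal{S}(u_n,h_n)$.

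The missing step is to use the slack in $a<1$:
\begin{enumerate}
\item Enlarge the angular radii to $(1+\varepsilon)\rho_n$, choosing $\varepsilon$ with $a(1+\varepsilon)^{d-1}<1$.
\item Run the second-moment argument for the complements of the enlarged open caps, which are compact and decreasing in $M$.
\item Observe that a point outside every enlarged open cap lies outside every original closed cap.
\end{enumerate}
The case $a=0$ should likewise first be reduced to $0<a<1$ by monotonicity, since your net scale relies on $\rho_M^{d-1}\asymp 1/M$.

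\paragraph{Your correlation factor is inverted.} Put $\Pi:=\prod_{n=N}^M(1-p_n)$. Three facts hold: $1-2p_n+q_n(t)\le 1-p_n$ for $n<m(t)$; $q_n(t)=0$ for $n\ge m(t)$; and $1-2p_n\le(1-p_n)^2$. Together they give
\[
\mathbb{P}[x,y\in V_M]\le\Pi^2\prod_{n=N}^{m(t)-1}(1-p_n)^{-1}\lesssim\Pi^2\,(m(t)/N)^{a+\varepsilon}.
\]
So close points are strongly correlated, and far points (those with $m(t)\le N$) are essentially independent.

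Your factor $(M/m(t))^a$ says the opposite. With it, the $\asymp M^2$ far pairs alone would exceed $(\mathbb{E}Y_M)^2$ by a factor $(M/N)^a$, and the bound fails.

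With the correct factor the argument goes through:
\begin{itemize}
\item There are $\lesssim M^2/(2^jN)$ pairs with $m(t)\approx 2^jN$, and they contribute a relative amount $\lesssim 2^{j(a+\varepsilon-1)}/N$. This is summable because $a<1$.
\item No delicate uniform overlap estimate is needed; the crude bound $q_n(t)\le p_n$ suffices.
\item The comparison is made between products directly, not via $\Pi\approx(N/M)^a$. This makes the bound uniform in $M$, even though $np_n\to a$ only controls the exponents up to $a\pm\varepsilon$.
\end{itemize}
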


	Recall that we are interested in the behaviour of $\mathcal{S}^\circ(u_n,\varphi(r_n))$, where, by \eqref{eq:invariant_measure_poincare_disc_model}, $u_1,u_2,\dots$ are uniformly distributed on $\mathbb{S}^{d-1}$ and $0<r_1<r_2<\ldots$ form an inhomogeneous Poisson point process on $(0,1)$ with intensity function
	\begin{align}\label{eq:intensity_r}
		f(r) := \frac{2\gamma(1 +2\lambda r + r^2)^{d-1}}{(1 - r^2)^d},\qquad 0<r<1.
	\end{align}
	We note that $\int_0^1 f(r)\,\dint r=+\infty$. In what follows, we use the notation $f(x)\sim g(x)$ as $x\to x_0$ for two functions $f$ and $g$ and some $x_0\in\mathbb{R}$ if $\lim\limits_{x\to x_0}\frac{f(x)}{g(x)}=1$. Moreover, for a sequence of random variables $(X_n)_{n\geq 1}$ and a positive deterministic sequence $(a_n)_{n\geq 1}$, we write $X_n=O(a_n)$ almost surely if, with probability one, there exist a constant $C>0$ and an integer $N\geq 1$ such that $|X_n|\leq C a_n$ for all $n\geq N$.
	
	\begin{lemma}\label{lemma:asymptotics_rn}
		Almost surely, it holds that
		\begin{align*}
			1-r_n\sim \Big(\frac{\gamma}{(d-1)n}\Big)^{1/(d-1)}(1+\lambda)
		\end{align*}
		as $n\to\infty$. Moreover, in the case $d=2$ we have almost surely
		\begin{align}\label{eq:rn_d2_second_order}
			1-r_n
			=\frac{\gamma(1+\lambda)}{n}
				+O\left(\sqrt{\frac{\log\log(n)}{n^3}}\right),
		\end{align}
	as $n\to\infty$.
	\end{lemma}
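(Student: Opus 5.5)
The plan is the standard time-change (mapping) argument for an inhomogeneous Poisson process on $(0,1)$. Let $F(r):=\int_0^r f(t)\,\dint t$, which is continuous and strictly increasing with $F(r)\to\infty$ as $r\uparrow 1$. By the mapping theorem, the points $\Gamma_n:=F(r_n)$ are the arrival times of a homogeneous Poisson process of unit intensity on $(0,\infty)$. Hence $\Gamma_n=E_1+\dots+E_n$ with $E_i$ i.i.d.\ standard exponential. The strong law of large numbers gives $\Gamma_n/n\to 1$ almost surely. The law of the iterated logarithm gives $\Gamma_n=n+O(\sqrt{n\log\log n})$ almost surely. It then remains to invert the asymptotics of $F$ near $1$.

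For the first claim I would note that, as $r\uparrow1$, we have $1+2\lambda r+r^2\to 2(1+\lambda)$ and $1-r^2\sim 2(1-r)$. Therefore
\[
f(r)\sim \frac{\gamma(1+\lambda)^{d-1}}{(1-r)^d}.
\]
Integrating this non-integrable singularity (by l'Hôpital, or by a direct comparison) gives
\[
F(r)\sim\frac{\gamma(1+\lambda)^{d-1}}{(d-1)(1-r)^{d-1}}.
\]
Plugging in $r=r_n$ and using $F(r_n)=\Gamma_n\sim n$ yields
\[
(1-r_n)^{d-1}\sim \frac{\gamma(1+\lambda)^{d-1}}{(d-1)n},
\]
which is exactly the stated asymptotics after taking the $(d-1)$-th root.

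For $d=2$ a sharper expansion of $F$ is needed: I aim to show $F(r)=\frac{\gamma(1+\lambda)}{1-r}+O(1)$. The key identity is $1+2\lambda r+r^2=(1-r)^2+2(1+\lambda)r$, which gives
\[
f(r)=\frac{2\gamma}{(1+r)^2}+\frac{4\gamma(1+\lambda)\,r}{(1-r)^2(1+r)^2}.
\]
The first term is bounded on $(0,1)$. For the second term, the function $r/(1+r)^2$ equals $1/4$ at $r=1$ and has vanishing derivative there, so $r/(1+r)^2=1/4+O((1-r)^2)$. Consequently the second term is $\gamma(1+\lambda)/(1-r)^2$ plus a bounded function, and integrating gives $F(r)=\gamma(1+\lambda)/(1-r)+O(1)$ uniformly on $(0,1)$.

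Combining this with the law of the iterated logarithm gives
\[
\frac{\gamma(1+\lambda)}{1-r_n}=n+O\bigl(\sqrt{n\log\log n}\bigr)
\]
almost surely. Inverting yields $1-r_n=\frac{\gamma(1+\lambda)}{n}\bigl(1+O(\sqrt{\log\log n/n})\bigr)$, which is \eqref{eq:rn_d2_second_order}.

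The only genuinely delicate point is this second-order step: the error in $F$ must be $O(1)$ rather than merely $o(1/(1-r))$. This is what the exact algebraic decomposition of $f$ above delivers, so that the LIL fluctuation dominates the remainder.
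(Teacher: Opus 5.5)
Your proposal is correct and follows essentially the paper's route: the time change $F(r_n)=P_n$ to a unit-rate Poisson process, the asymptotics $F(r)\sim \gamma(1+\lambda)^{d-1}(1-r)^{1-d}/(d-1)$ for the first claim, and, for $d=2$, an expansion $F(r)=\gamma(1+\lambda)/(1-r)+O(1)$ combined with the law of the iterated logarithm. The only difference is how you get that $O(1)$ remainder. The paper integrates $f$ in closed form as $F(z)=2\gamma z(1+\lambda z)/(1-z^2)$ and inverts $\tau$ to second order. You use the decomposition $1+2\lambda r+r^2=(1-r)^2+2(1+\lambda)r$ and invert directly at the random times, which yields the same constant $-\frac{\gamma}{2}(1+3\lambda)$ and the same conclusion with slightly less work.
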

	\begin{proof} The first part of the following proof is analogous to the proof of \cite[Lemma 4.2]{GKT22} for $\lambda=0$.
		 Let $P_1<P_2<\ldots$ be arrivals of a homogeneous Poisson point process on $(0,\infty)$ with intensity $1$. We use the distributional representation $\tau(P_n)=r_n$, where $\tau:(0,\infty)\to (0,1)$ is monotone increasing with
		\begin{align*}
			\int_0^{\tau(y)}\frac{2\gamma(1 +2\lambda r + r^2)^{d-1}}{(1 - r^2)^d}\;\mathrm{d}r=y
		\end{align*}
		for all $y\in(0,\infty)$.
		By L'Hospitals rule we have
		\begin{align*}
			\lim\limits_{z\uparrow 1}\frac{	\int_0^{z}\frac{2\gamma(1 +2\lambda r + r^2)^{d-1}}{(1 - r^2)^d}\;\mathrm{d}r}{\frac{\gamma(1+\lambda)^{d-1}}{(d-1)}(1-z)^{1-d}}=	\lim\limits_{z\uparrow 1}\frac{\frac{2\gamma(1 +2\lambda z + z^2)^{d-1}}{(1 - z^2)^d}}{\gamma(1+\lambda)^{d-1}(1-z)^{-d}}=\lim\limits_{z\uparrow 1}\frac{2(1 +2\lambda z + z^2)^{d-1}}{(1+z)^d(1+\lambda)^{d-1}}=1.
		\end{align*}
		Thus,
		\begin{align*}
				\int_{0}^{z}\frac{2\gamma(1 +2\lambda r + r^2)^{d-1}}{(1 - r^2)^d}\;\mathrm{d}r\sim \frac{\gamma(1+\lambda)^{d-1}}{(d-1)}(1-z)^{1-d}
		\end{align*}
		as $z\uparrow 1$. Since $\tau(y)\to 1$ as $y\to \infty$ we have
		\begin{align*}
			\frac{\gamma(1+\lambda)^{d-1}}{(d-1)}(1-\tau(y))^{1-d}\sim y
		\end{align*}
		as $y\to\infty$. Since $P_n \sim n$ almost surely as $n\to\infty$ and $\tau(P_n)=r_n$, it
		follows that almost surely
		\[
		1-r_n = 1-\tau(P_n)
		\sim \Big(\frac{\gamma}{(d-1)P_n}\Big)^{1/(d-1)}(1+\lambda)
		\sim \Big(\frac{\gamma}{(d-1)n}\Big)^{1/(d-1)}(1+\lambda),
		\]
		which proves the first claim.
		
		Assume now that $d=2$ and set
		\[
		F(z):=\int_0^{z}\frac{2\gamma(1+2\lambda r+r^2)}{(1-r^2)^2}\,\dint r,
		\qquad z\in(0,1).
		\]
		A direct computation shows that
		\begin{equation}\label{eq:F_explicit_d2_inlemma}
			F(z)=\frac{2\gamma z(1+\lambda z)}{1-z^2},\qquad z\in(0,1).
		\end{equation}
		Indeed, differentiating the right-hand side yields the integrand and $F(0)=0$. We can now argue as before, but in view of the explicit form of $F(z)$ in \eqref{eq:F_explicit_d2_inlemma} the error terms can be controlled more precisely. Namely, by definition of $\tau$, we have $F(\tau(y))=y$ for all $y>0$.
		Write $\varepsilon:=1-z$ and expand \eqref{eq:F_explicit_d2_inlemma} at $z=1$ to obtain, as $\varepsilon\downarrow 0$,
		\begin{equation}\label{eq:F_expansion_d2_inlemma}
			F(1-\varepsilon)
			=\frac{\gamma(1+\lambda)}{\varepsilon}-\frac{\gamma}{2}(1+3\lambda)+O(\varepsilon).
		\end{equation}
		Since $\tau(y)\uparrow 1$ as $y\to\infty$, setting $\varepsilon(y):=1-\tau(y)$ and inserting $z=\tau(y)$ into
		\eqref{eq:F_expansion_d2_inlemma} yields
		\[
		y=\frac{\gamma(1+\lambda)}{\varepsilon(y)}-\frac{\gamma}{2}(1+3\lambda)+O(\varepsilon(y)),
		\qquad y\to\infty.
		\]
		Inverting this relation gives
		\begin{equation}\label{eq:eps_asymptotics_d2_inlemma}
			1-\tau(y)
			=\frac{\gamma(1+\lambda)}{y}
			-\frac{\gamma^2}{2}\frac{(1+\lambda)(1+3\lambda)}{y^2}
			+o\Big(\frac{1}{y^2}\Big),
			\qquad y\to\infty.
		\end{equation}
		Moreover, by the law of iterated logarithm it holds $P_n=n+O(\sqrt{n\log\log(n)})$ almost surely as $n\to\infty$. Thus, using Taylor expansion of the function $g(x)=\frac{1}{x}$, we get almost surely,
		\begin{align*}
			\frac{1}{P_n}=\frac{1}{n}+O\left(\frac{\sqrt{n\log\log(n)}}{n^2}\right)=\frac{1}{n}+O\left(\sqrt{\frac{\log\log(n)}{n^3}}\right).
		\end{align*}
		Finally, since $r_n=\tau(P_n)$, substituting $y=P_n$ into
		\eqref{eq:eps_asymptotics_d2_inlemma} yields almost surely
		\begin{align*}
		1-r_n
		=1-\tau(P_n)
		&=\frac{\gamma(1+\lambda)}{P_n}
		-\frac{\gamma^2}{2}\frac{(1+\lambda)(1+3\lambda)}{P_n^2}
		+o\Big(\frac{1}{P_n^2}\Big)\\
		&=\frac{\gamma(1+\lambda)}{n}
		+O\left(\sqrt{\frac{\log\log(n)}{n^3}}\right),
		\end{align*}
		which proves \eqref{eq:rn_d2_second_order}.
	\end{proof}

	We are now prepared to derive the following result, which in particular covers part (a) of Theorem \ref{thm:MainIntro} stated in the introduction.

	\begin{theorem}\label{thm:CoveringGenerald}
		Let $u_1,u_2,\dots$ be independent and uniformly distributed on $\mathbb{S}^{d-1}$, let $0<r_1<r_2<\ldots$ be such that they form an inhomogeneous Poisson point process on $(0,1)$ with intensity function $f$, where $f$ is defined as in \eqref{eq:intensity_r}, and assume that the sequences $(u_n)_{n\geq 1}$ and $(r_n)_{n\geq 1}$ are independent.
		Let $\gamma_\mathrm{crit}$ be the constant in \eqref{eq:GammaCrit}.
		\begin{itemize}
			\item[a)] If $\gamma>\gamma_\mathrm{crit}$, then $\displaystyle\mathbb{P}[\limsup_{n\to\infty} \mathcal{S}^\circ(u_n, \varphi(r_n)) = \mathbb{S}^{d-1}] = 1$. Thus, the visibility set $Z_{\gamma,\lambda,d}$ in \eqref{eq:DefVisibilitySet} is bounded almost surely.
			\item[b)] If $\gamma<\gamma_\mathrm{crit}$, then $\displaystyle\mathbb{P}[\limsup_{n\to\infty} \mathcal{S}^\circ(u_n, \varphi(r_n)) = \mathbb{S}^{d-1}] = 0$ and $\displaystyle\mathbb{P}\big[\bigcup_{n=1}^\infty \mathcal{S}(u_n, \varphi(r_n)) \neq \mathbb{S}^{d-1}\big] > 0$. In particular, the probability that the visibility set $Z_{\gamma,\lambda,d}$ is unbounded is strictly positive.
		\end{itemize}
	\end{theorem}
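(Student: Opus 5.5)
The plan is to apply Proposition~\ref{thm: Hoffmann_Jorgensen} conditionally on the sequence $(r_n)_{n\geq1}$, with $h_n:=\varphi(r_n)$. Since $(u_n)$ is i.i.d.\ uniform and independent of $(r_n)$, it suffices to show that, almost surely,
\[
\lim_{n\to\infty} n\,\sigma_{d-1}\bigl(\mathcal{S}^\circ(u_n,\varphi(r_n))\bigr)=a,
\qquad a:=\frac{\gamma}{\gamma_{\mathrm{crit}}}.
\]
Then $a>1$ corresponds to $\gamma>\gamma_{\mathrm{crit}}$ and $a<1$ to $\gamma<\gamma_{\mathrm{crit}}$. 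Fubini over the law of $(r_n)$ transfers the conditional conclusions to unconditional ones: probability one, probability zero, and positive probability respectively.

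The first step is the cap measure. For a cap of height $h$ close to $1$, the normalized measure satisfies
\[
\sigma_{d-1}(\mathcal{S}^\circ(u,h))\sim c_d\,(1-h)^{(d-1)/2}
\]
with an explicit constant $c_d$. This comes from integrating $(1-t^2)^{(d-3)/2}$ over $t\in(h,1)$ and normalizing by $\int_{-1}^1(1-t^2)^{(d-3)/2}\,\dint t=\sqrt{\pi}\,\Gamma(\frac{d-1}{2})/\Gamma(\frac d2)$. Substituting $t=1-s$ gives $(2s)^{(d-3)/2}$ near $s=0$, so
\[
c_d=\frac{2^{(d-1)/2}\,\Gamma(\frac d2)}{(d-1)\sqrt{\pi}\,\Gamma(\frac{d-1}{2})}.
\]

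Next, combine Lemma~\ref{lemma:asymptotics_varphi} and Lemma~\ref{lemma:asymptotics_rn}. To leading order,
\[
1-\varphi(r_n)\sim \frac{(1-r_n)^2}{2(1+\lambda)^2},
\qquad
1-r_n\sim(1+\lambda)\Bigl(\frac{\gamma}{(d-1)n}\Bigr)^{1/(d-1)} \quad\text{a.s.}
\]
Hence $1-\varphi(r_n)\sim\frac12\bigl(\gamma/((d-1)n)\bigr)^{2/(d-1)}$. This is the key cancellation: the factor $(1+\lambda)^2$ drops out. Raising to the power $(d-1)/2$ gives
\[
n\,\sigma_{d-1}\bigl(\mathcal{S}^\circ(u_n,\varphi(r_n))\bigr)\to c_d\,2^{-(d-1)/2}\,\frac{\gamma}{d-1}
=\frac{\gamma\,\Gamma(\frac d2)}{(d-1)^2\sqrt{\pi}\,\Gamma(\frac{d-1}{2})}
=\frac{\gamma}{\gamma_{\mathrm{crit}}}
\]
almost surely, which matches \eqref{eq:GammaCrit}. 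Only the first-order asymptotics of the two lemmas are needed here. The second-order terms are reserved for the critical case $d=2$, which is not part of this statement.

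The last step, which I expect to be the main obstacle, is translating the covering statements into statements about $Z_{\gamma,\lambda,d}$. Here $\mathcal{S}^\circ(u_n,\varphi(r_n))$ is the set of directions whose geodesic ray from $o$ meets $H(r_n,u_n)$, up to boundary directions.

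For part a), suppose every $x\in\mathbb{S}^{d-1}$ lies in some open cap. The open caps cover the compact sphere, so finitely many suffice. Each ray in a direction covered by cap $n$ hits $H(r_n,u_n)$ at bounded distance from $o$. Since the hypersurface is a Euclidean sphere meeting $\mathbb{S}^{d-1}$ at angle $\theta$, the first hitting distance is bounded uniformly on compact sub-caps. I would therefore shrink the caps slightly while keeping a cover, again by compactness. This gives a uniform radius beyond which every segment $[o,x]$ is blocked, so $Z_{\gamma,\lambda,d}$ is bounded.

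For part b), take a direction $x$ outside all the closed caps. The ray from $o$ in direction $x$ then meets no hyperplane of $\eta_{\gamma,\lambda}$, so the whole ray lies in $Z_{\gamma,\lambda,d}$, which is therefore unbounded. This happens with positive probability.
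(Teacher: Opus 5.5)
Your proposal is correct and follows essentially the same route as the paper. It combines the cap asymptotics $\sigma_{d-1}(\mathcal{S}^\circ(u,h))\sim c_d(1-h)^{(d-1)/2}$ with Lemmas~\ref{lemma:asymptotics_varphi} and~\ref{lemma:asymptotics_rn} to get $n\,\sigma_{d-1}(\mathcal{S}^\circ(u_n,\varphi(r_n)))\to\gamma/\gamma_{\mathrm{crit}}$ almost surely, and then applies Proposition~\ref{thm: Hoffmann_Jorgensen}. Your conditioning on $(r_n)$ with Fubini, and your compactness argument for passing from cap coverage to boundedness of $Z_{\gamma,\lambda,d}$ (really needed only for $\lambda=0$, where hitting distances blow up at the cap boundary), make explicit what the paper leaves implicit.
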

	\begin{proof}
		Let $u\in\mathbb{S}^{d-1}$ and $0\leq h\leq 1$. Then,
		\begin{equation*}
			\sigma_{d-1}(\mathcal{S}^\circ(u,h)) =  \frac{\Gamma(\frac{d}{2})}{\sqrt{\pi}\,\Gamma(\frac{d-1}{2})}\int_h^1(1-s^2)^{d-3\over 2}\,\dint s
		\end{equation*}
		according to \cite[Equation (2.7)]{KSTBook}. It follows that,
		\begin{align*}
			\sigma_{d-1}(\mathcal{S}^\circ(u,h))\sim c_d(1-h)^{\frac{d-1}{2}},\qquad h\uparrow 1,
		\end{align*}
		for
		$$
		c_d := \frac{\Gamma(\frac{d}{2})}{\sqrt{\pi}\,\Gamma(\frac{d-1}{2})}\cdot {2^{d-1\over 2}\over d-1} = \frac{2^{\frac{d-1}{2}}\Gamma(\frac{d}{2})}{\sqrt{\pi}(d-1)\Gamma(\frac{d-1}{2})}.
		$$ 		
		Since almost surely $r_n\to 1$ as $n\to\infty$, combining Lemma \ref{lemma:asymptotics_varphi} and Lemma \ref{lemma:asymptotics_rn} leads to
		\begin{align}\label{eq: 1-phi(rn)_asymptotics}
			1-\varphi(r_n)\sim \frac{(1-r_n)^2}{2(1+\lambda)^2}\sim{1\over 2}\Big({\gamma\over (d-1)n}\Big)^{2\over d-1}
		\end{align}
		as $n\to\infty $. It follows that almost surely
		$$
		\sigma_{d-1}(\mathcal{S}^\circ(u_n,\varphi(r_n)))\sim \frac{c_d\gamma}{2^{(d-1)/2}(d-1)n}=\frac{\gamma\Gamma(\frac{d}{2})}{\sqrt{\pi}(d-1)^2\Gamma(\frac{d-1}{2})n}
		$$
		as $n\to\infty$. Applying Proposition \ref{thm: Hoffmann_Jorgensen} completes the proof with $\gamma_\mathrm{crit}$ as in \eqref{eq:GammaCrit}. The translation to the visibility set follows from the construction described in Section \ref{sec:Covering}.
	\end{proof}

\begin{rema}\rm
It is natural to ask whether the visibility event
 '\textit{there exists a geodesic ray from $o$ avoiding all $\lambda$--geodesic hyperplanes of $\eta_{\gamma,\lambda}$}'
is monotone in $\lambda$ in a suitable coupling.  On an abstract level, if two locally finite intensity measures on the same state space satisfy $\nu_1\le \nu_2$, then one can construct Poisson point processes $\Pi_1$ and $\Pi_2$ with intensity measures $\nu_1$ and $\nu_2$, respectively, on a common probability space such that $\Pi_1\subseteq \Pi_2$ almost surely.
	In our setting, this idea can be applied to the induced Poisson process on
	$\mathbb S^{d-1}\times(0,1)$ that generates the random family of shadows
	$\{\mathcal S(u_n,\varphi(r_n))\}_{n\ge1}$.
	Indeed, using~\eqref{eq:varphi(r)} and~\eqref{eq:intensity_r} one verifies that the intensity of the Poisson point process $(\varphi(r_n))_{n\ge1}$ on $(0,1)$ is given by
$$
g(s;\lambda) =\frac{\gamma\, s}   {(1-s^2)^{\frac{d+1}{2}}\, \sqrt{s^2+\lambda^2(1-s^2)}}, \qquad  0<s<1.
$$
For fixed $s$, this function is decreasing in $\lambda$, which means that there exists a coupling of the shadow processes on $\mathbb S^{d-1}$ such that, for all $0\leq \lambda_1 < \lambda_2\leq 1$, every shadow present in the process with parameter  $\lambda_2$ is also present in the process with parameter $\lambda_1$.
However, this does not  yield a coupling of the underlying $\lambda$-hyperplane processes $\{\eta_{\gamma,\lambda}\}_{\lambda\in[0,1]}$ on the state spaces $\mathrm{Hyp}_\lambda^o$.

Another  natural attempt to construct a coupling of the $\lambda$-hyperplane processes is to use that,  for $\lambda\in(0,1)$,  a $\lambda$--geodesic hyperplane is an equidistant hypersurface to a totally geodesic hyperplane. However,  passing to an equidistant may change on which side of the hypersurface the origin lies. Hence a naive attempt to couple across $\lambda$ by mapping each $H\in\mathrm{Hyp}_0^o$ to one of its equidistants may leave the state space $\mathrm{Hyp}_\lambda^o$, and any subsequent correction (for instance by discarding such hypersurfaces) would alter the intensity in a $\lambda$--dependent way.
\end{rema}


\subsection{The case $\gamma=\gamma_{\mathrm{crit}}$}

A natural question is what happens at the critical value $\gamma=\gamma_{\mathrm{crit}}$.
In continuum percolation models, the behaviour at criticality is typically the most delicate part of the phase transition and has been the focus of substantial recent work.
In particular, for the isometry-invariant Poisson hyperplane tessellations in $\mathbb{H}^d$ it was recently shown that at the critical intensity there are no unbounded cells, thus settling the critical case in all dimensions, see \cite[Theorem 1.1]{BuehlerGusakovaRecke}.
This naturally raises the analogous question in our setting: whether, for $\lambda\in[0,1]$, the visibility set $Z_{\gamma,\lambda,d}$ is almost surely bounded at $\gamma=\gamma_{\mathrm{crit}}$.
Our argument based on Proposition~\ref{thm: Hoffmann_Jorgensen} does not apply at the boundary value $a=1$, and the strategy of \cite[Section~3]{BuehlerGusakovaRecke} relies on working with an isometry-invariant intensity measure.
Since we restrict to $\mathrm{Hyp}_\lambda^o$, the measure $\mu_{\gamma,\lambda}$ is not isometry-invariant, and the critical-case arguments from the isometry-invariant model do not transfer directly.
For this reason, we do not address the critical case in full generality here.
Nevertheless, using a criterion from \cite{Shepp}, we can show that in dimension $d=2$ the visibility region $Z_{\gamma,\lambda,2}$ is almost surely bounded for every $\lambda\in[0,1]$ even at $\gamma=\gamma_{\mathrm{crit}}$, proving the last claim in Theorem \ref{thm:MainIntro} (a).

	\begin{prop}\label{prop:gamma=gamma_crit}
		We assume the same set-up as in Theorem \ref{thm:CoveringGenerald}. For $d=2$ and $\gamma=\gamma_{\mathrm{crit}}$ it holds
		$\displaystyle\mathbb{P}[\limsup_{n\to\infty} \mathcal{S}^\circ(u_n, \varphi(r_n)) = \mathbb{S}^{1}] = 1$. Thus, the visibility set $Z_{\gamma_\mathrm{crit},\lambda,2}$ in \eqref{eq:DefVisibilitySet} is almost surely bounded.
	\end{prop}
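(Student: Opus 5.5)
On the circle $\mathbb{S}^1$ I will use Shepp's criterion for random arc coverings. Shepp's theorem says that arcs of lengths $\ell_1\ge \ell_2\ge\dots$ (normalised so that the circle has length $1$), placed at i.i.d. uniform positions, cover the circle almost surely if and only if
\[
\sum_{n\ge1}\frac{1}{n^2}\exp\bigl(\ell_1+\dots+\ell_n\bigr)=\infty .
\]
The same condition also gives infinitely-often covering, i.e. $\limsup$ equal to the whole circle, after discarding any finite number of arcs. The arc $\mathcal{S}^\circ(u_n,\varphi(r_n))$ has normalised length
\[
\ell_n=\sigma_1\bigl(\mathcal{S}^\circ(u_n,\varphi(r_n))\bigr)=\frac{\arccos(\varphi(r_n))}{\pi}.
\]
Since $r_n$ is increasing and $\varphi$ is increasing in $r$, the $\ell_n$ are already in decreasing order. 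They are random but independent of the $u_n$, so I condition on $(r_n)$ and apply Shepp's criterion pathwise.

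The main task is a second-order expansion of $\ell_n$, sharp enough that $\sum_{k\le n}\ell_k=\log n+O(1)$ almost surely. Then the Shepp series behaves like $\sum n^{-2}\cdot n=\sum 1/n=\infty$.

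First, for $d=2$ we have $\gamma_{\mathrm{crit}}=\pi$. From $\arccos(1-x)=\sqrt{2x}\,(1+x/12+O(x^2))$ and Lemma~\ref{lemma:asymptotics_varphi} with $\varepsilon=1-r$, we get $1-\varphi=\frac{\varepsilon^2}{2(1+\lambda)^2}(1+\varepsilon+o(\varepsilon))$, hence
\[
\arccos\varphi(r)=\frac{\varepsilon}{1+\lambda}\Bigl(1+\frac{\varepsilon}{2}+o(\varepsilon)\Bigr).
\]
In fact it is cleaner to use the exact formula: from the derivation of \eqref{eq:varphi(r)}, $\sin\alpha=f(r)=\frac{1-r^2}{1+r^2+2\lambda r}$, so $\arccos\varphi=\arcsin f(r)=f(r)+O(\varepsilon^3)$. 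The only property needed is that the error term is $O(\varepsilon^2)$.

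Next, Lemma~\ref{lemma:asymptotics_rn} gives $\varepsilon_n=1-r_n=\frac{\pi(1+\lambda)}{n}+O\bigl(\sqrt{\log\log n/n^3}\bigr)$. Therefore
\[
\ell_n=\frac{1}{\pi}\cdot\frac{\varepsilon_n}{1+\lambda}+O(\varepsilon_n^2)=\frac1n+O\Bigl(\sqrt{\tfrac{\log\log n}{n^3}}\Bigr)+O\Bigl(\frac1{n^2}\Bigr).
\]
Both error terms are summable almost surely, so $\sum_{k\le n}\ell_k=\log n+C+o(1)$ for some almost surely finite random constant $C$. Hence $\exp(\ell_1+\dots+\ell_n)\ge c\,n$ for a random $c>0$, and the Shepp series diverges. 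Conditionally on $(r_n)$ the circle is covered almost surely, and integrating over $(r_n)$ gives the unconditional statement.

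To obtain the $\limsup$ statement I apply Shepp's criterion to the tail sequence $(\ell_{m+k})_{k\ge1}$ for every $m$. Its partial sums are $\log n-\log m+O(1)$, so that series diverges as well. Consequently, for every $m$ the arcs with index larger than $m$ cover $\mathbb{S}^1$ almost surely. Intersecting over $m$ shows that every point is covered infinitely often. Boundedness of $Z_{\gamma_{\mathrm{crit}},\lambda,2}$ then follows exactly as in Theorem~\ref{thm:CoveringGenerald}: covering by the open arcs gives a finite subcover by compactness, and a finite cover by hyperplanes at distance less than $1$ bounds the visible region.

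The main obstacle is controlling the error in $\ell_n$ at order $o(1/n)$ in a summable way. This is exactly why the second-order expansion in Lemma~\ref{lemma:asymptotics_rn} with the law of the iterated logarithm was prepared. The second delicate point is checking the exact form of Shepp's condition: open versus closed arcs, and the normalisation of arc length to $1$. Here I will cite Shepp's theorem in the form for arcs of length $\ell_n\le 1$ with the series $\sum n^{-2}e^{\ell_1+\dots+\ell_n}$.
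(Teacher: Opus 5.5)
Your proposal is correct and follows essentially the same route as the paper: Shepp's criterion with $\ell_n=\arccos(\varphi(r_n))/\pi$, the refined $d=2$ asymptotics of Lemma~\ref{lemma:asymptotics_rn} giving $\ell_n=1/n$ plus a summable error, and divergence of $\sum_n n^{-2}e^{\ell_1+\dots+\ell_n}$ by comparison with the harmonic series. Your additional points only make explicit what the paper leaves implicit or handles slightly differently: the decreasing order of the $\ell_n$, conditioning on $(r_n)$, the tail argument for the $\limsup$ statement, and reading off $\arccos\varphi=\arcsin f(r)$ directly instead of via Lemma~\ref{lemma:asymptotics_varphi}.
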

	\begin{proof}
		As before, let  $0<r_1<r_2<\dots$ be such that they form an inhomogeneous Poisson point process on $(0,1)$ with intensity function $f$ defined in \eqref{eq:intensity_r}. Then, in the Poincar\'e disc model a $\lambda$-geodesic hyperplane $H(r_n,u)$ of distance $r_n$ to $o$ in direction $u\in\mathbb{S}^1$ covers an arc of length $2\arccos(\varphi(r_n))$ for $n\in\mathbb{N}$ (see Figure \ref{fig: configuration_caps}). Now, by \cite[Equation (1)]{Shepp}, Proposition \ref{prop:gamma=gamma_crit} follows once we have shown that
		$$\sum_{n=1}^{\infty}\frac{1}{n^2}e^{\ell_1+\ldots+\ell_n}=\infty,$$
		where $2\pi\ell_i$ is the arc length covered by $r_i$, i.e.\ $\ell_i=\frac{1}{\pi}\arccos(\varphi(r_i))$ for $i\in\mathbb{N}$. Combining \eqref{eq: 1-phi(rn)_asymptotics}, $\gamma_{\mathrm{crit}}=\pi$ and the refined $d=2$ asymptotics from Lemma~\ref{lemma:asymptotics_rn} we can sharpen the estimate for the arc lengths $\ell_i$.
		Indeed, for $d=2$ and $\gamma=\gamma_{\mathrm{crit}}=\pi$, Lemma~\ref{lemma:asymptotics_rn} yields almost surely
		\begin{equation*}
			1-r_i=\frac{\pi(1+\lambda)}{i} +O\left(\sqrt{\frac{\log\log(i)}{i^3}}\right),\qquad i\to\infty.
		\end{equation*}
		Together with Lemma~\ref{lemma:asymptotics_varphi} this implies almost surely
		\begin{equation*}
			\delta_i:=1-\varphi(r_i)
			=\frac{(1-r_i)^2}{2(1+\lambda)^2}+O\big((1-r_i)^3\big)
			=\frac{\pi^2}{2i^2}+O\left(\sqrt{\frac{\log\log(i)}{i^5}}\right),
			\qquad i\to\infty.
		\end{equation*}
		Using the Taylor expansions $\arccos(1-x)=\sqrt{2x}+O(x^{3/2})$ as $x\downarrow 0$ and $\sqrt{1+x}=1+\frac{x}{2}+O(x^2)$ as $x\downarrow 0$, we obtain almost surely
		\[
		\arccos(\varphi(r_i))
		=\arccos(1-\delta_i)
		=\sqrt{2\delta_i}+O(\delta_i^{3/2})
		=\frac{\pi}{i}+O\left(\sqrt{\frac{\log\log(i)}{i^3}}\right),
		\qquad i\to\infty.
		\]
		Consequently,
		\begin{equation*}
			\ell_i=\frac{1}{\pi}\arccos(\varphi(r_i))
			=\frac{1}{i}+O\left(\sqrt{\frac{\log\log(i)}{i^3}}\right),
			\qquad i\to\infty,
		\end{equation*}
		almost surely. Hence, as $\sum_{k=1}^\infty \sqrt{\frac{\log\log(k)}{k^3}}<\infty,$ this
		 implies that there exists $c\in\mathbb{R}$ such that
		\[
		\sum_{k=1}^{n}\ell_k
		\ge \sum_{k=1}^{n}\frac{1}{k}+c
		\ge \ln(n)+c,
		\]
		where we used $\sum_{k=1}^n \frac1k \ge \ln(n)$.
		Thus,
		\begin{align*}
			\sum_{n=1}^{\infty}\frac{1}{n^2}e^{\ell_1+\ldots+\ell_n}
			&\ge \sum_{n=1}^{\infty}\frac{1}{n^2}e^{\ln(n)+c}
			= \sum_{n=1}^\infty \frac{e^{c}}{n}
			=\infty,
		\end{align*}
		which completes the proof.
	\end{proof}

	\section{Measure of $\lambda$-geodesic hyperplanes hitting a segment}\label{sec:IntegralGeometrySegment}

\subsection{Integral representation}

If we wish to compute the expected volume of the set of points in $\mathbb{H}^d$ that are visible from $o$ in the presence of a Poisson process of $\lambda$-geodesic hyperplanes in Section \ref{sec:ExpectedVolume}, we are naturally led to a purely geometric problem: What is the measure $\nu_\lambda^o$ of $\lambda$-geodesic hyperplanes intersecting a geodesic segment of a given length starting in $o$?
When $\lambda = 0$, each totally geodesic hyperplane can intersect a geodesic segment at most once, and the corresponding measure is a linear function of the segment length, see \cite{BuehlerHugThaele}.
However, for every $\lambda > 0$, a $\lambda$-geodesic hyperplane may intersect a geodesic segment in $0$, $1$, or $2$ points.  In particular, a transition from an indicator function to the Euler characteristic as in \eqref{eq:IndicatorToEulerChar} is no more possible. Showing that the measure remains a linear function of the segment length in this setting is a nontrivial problem, and resolving it is the aim of this section.

To tackle this, we first need to characterize exactly which $\lambda$-geodesic hyperplanes $H(r,u)$ intersect a reference geodesic segment. It suffices to consider a segment of hyperbolic length $h$ starting at the origin and aligned with the first coordinate vector $e_1:=(1,0,\ldots,0)\in\mathbb{R}^d$. In the Poincar\'e ball model, this corresponds to the Euclidean segment connecting the origin to the point $\tanh(h/2)e_1$. The following lemma establishes the precise analytic conditions on the parameters $r$ and $u$ that are necessary and sufficient for such an intersection to occur.

\begin{lemma}\label{lemma:schnitt_indicator} Let $h>0$, $0\leq r<1$, $u\in\mathbb{S}^{d-1}$ and $u_1:=\langle u,e_1\rangle$. Then, it holds that $H(r,u)\cap\left[0, \tanh(\frac{h}{2})e_1\right]\neq\emptyset$ if and only if one of the following cases occurs:
	\begin{enumerate}
		\item $r=0$,
		\item $\lambda=0$, $r\in(0,\tah]$ and $u_1\geq \frac{\tahq r+r}{\tah(r^2+1)}$,
		\item  $\lambda\in(0,1]$, $r\in\left(0,r_c\right)\textit{ and } u_1\geq \frac{2\sqrt{(r^2\lambda+r)(\lambda+r)}}{r^2+2\lambda r+1}$,
		\item $\lambda\in(0,1]$, $r\in\left[r_c,\tah\right]\textit{ and }u_1\geq \frac{\tahq(\lambda+r)+r^2\lambda+r}{\tah(r^2+2\lambda r+1)}$,
	\end{enumerate}
	where $r_c:=\frac{\tahq-1}{2\lambda}+\sqrt{\frac{\left(1-\tahq\right)^2}{4\lambda^2}+\tahq}$.
\end{lemma}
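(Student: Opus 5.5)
We work directly in the Poincar\'e ball model with $o=0$ and write $t:=\tah$, so the segment is $\{xe_1: 0\le x\le t\}$. By the construction in Section~\ref{sec:Covering}, $H(r,u)=S\cap\mathbb{B}^d$, where $S$ is the Euclidean sphere with centre $c=(R+r)u$ and radius $R=\frac{1-r^2}{2(\lambda+r)}$. The point $0$ lies outside $S$, since $\|c\|=R+r>R$. The case $r=0$ is trivial: then $0\in S$, which gives case 1. For $r>0$ the segment meets $S$ if and only if the quadratic
\[
q(x):=\|xe_1-c\|^2-R^2=x^2-2x(R+r)u_1+r^2+2rR
\]
has a root in $[0,t]$. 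Note that $q(0)=r(r+2R)>0$. Any root inside $[0,t]$ automatically lies in $\mathbb{B}^d$, so no separate check for that is needed.

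Since $q(0)>0$, a root in $[0,t]$ exists exactly when one of the following holds:
\begin{enumerate}
\item[(i)] $q(t)\le 0$;
\item[(ii)] $q(t)>0$, the vertex $x^*=(R+r)u_1$ lies in $(0,t)$, and the discriminant condition $(R+r)^2u_1^2\ge r^2+2rR$ holds.
\end{enumerate}
Next we make both conditions explicit in $u_1$. Condition (i) is linear in $u_1$ and reads
\[
u_1\ge \frac{t^2+r^2+2rR}{2t(R+r)}.
\]
Substituting $R$, and using $R+r=\frac{1+2\lambda r+r^2}{2(\lambda+r)}$ and $r^2+2rR=\frac{r(1+\lambda r)}{\lambda+r}$, this becomes $u_1\ge\frac{t^2(\lambda+r)+r^2\lambda+r}{t(r^2+2\lambda r+1)}$, which is the threshold in case 4 (and in case 2 when $\lambda=0$). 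The discriminant condition reads
\[
u_1\ge \frac{\sqrt{r^2+2rR}}{R+r}=\varphi(r),
\]
using $u_1\ge0$, which is forced by $x^*>0$. This matches \eqref{eq:varphi(r)} and the threshold in case 3. So for fixed $r$ the intersection set in $u_1$ is the union of two upward rays, one for (i) and one for (ii). The task is to decide, depending on $r$, which threshold is smaller and whether the vertex constraint $x^*<t$ in (ii) ever matters.

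The key comparison is as follows. Call the threshold from (i) $a(r)$; by AM--GM, $a(r)\ge\varphi(r)$, with equality exactly when $t^2=r^2+2rR$, i.e.\ when the tangency point $x^*=\sqrt{r^2+2rR}$ equals $t$. The equation $t^2(\lambda+r)=r(1+\lambda r)$ is the quadratic $\lambda r^2+(1-t^2)r-\lambda t^2=0$, whose positive root is exactly $r_c$. This gives the following picture:
\begin{itemize}
\item For $r<r_c$ we have $r^2+2rR<t^2$ (this needs a monotonicity check of $r(1+\lambda r)/(\lambda+r)$ in $r$). At $u_1=\varphi(r)$ the tangency point then satisfies $x^*<t$. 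As $u_1$ increases past $\varphi(r)$, either the smaller root stays in $[0,t]$ or (i) holds, so the set of admissible $u_1$ is exactly $[\varphi(r),1]$. This is case 3.
\item For $r\ge r_c$, whenever the discriminant condition holds we have $x^*\ge\sqrt{r^2+2rR}\ge t$, so (ii) never applies and only (i) remains. This is case 4.
\item For $r>t$ the condition $a(r)\le1$ fails, since $q(t)>0$ for all $u$ when $t<r$ and $S$ lies outside the ball of radius $r$. So no intersections occur, which explains the upper bound $r\le t$.
\item For $\lambda=0$ the sphere is orthogonal to $\mathbb{S}^{d-1}$ and $R=\frac{1-r^2}{2r}$. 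Here $r_c$ degenerates to $t$ (the limit as $\lambda\to0$), so only (i) applies, giving case 2 with the stated simplified threshold.
\end{itemize}

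The main obstacle is this case analysis for (ii) when $r<r_c$: we must verify that on the whole range $u_1\in[\varphi(r),a(r))$ the smaller root $x^*-\sqrt{x^{*2}-r^2-2rR}$ lies in $[0,t]$. I would show this by noting that the smaller root is decreasing in $u_1$ and equals $\sqrt{r^2+2rR}<t$ at $u_1=\varphi(r)$. Separately, I would confirm $r_c\le t$, so that the ranges of $r$ in cases 3 and 4 are consistent.
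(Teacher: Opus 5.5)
Your argument is essentially the paper's. You use the same quadratic in $x$, and your thresholds $\varphi(r)$ and $a(r)$ are exactly the paper's $f_1(r)$ and $f_3(r)$. The AM--GM comparison is the same, as is the split according to whether $r^2+2rR<t^2$, converted into $r<r_c$. The paper does that conversion by reading $\frac{r^2\lambda+r}{\lambda+r}<t^2$ as the quadratic inequality $\lambda r^2+(1-t^2)r-\lambda t^2<0$, which avoids your monotonicity check. Your check also works, since $\frac{\mathrm{d}}{\mathrm{d}r}\frac{r(1+\lambda r)}{\lambda+r}=\frac{\lambda(1+2\lambda r+r^2)}{(\lambda+r)^2}>0$. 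Organizing things through $q(0)>0$, the sign of $q(t)$ and the position of the vertex is a clean substitute for the paper's $f_2$/squaring step. So is the bound $x_-=\frac{r^2+2rR}{(R+r)u_1+\sqrt{(R+r)^2u_1^2-r^2-2rR}}\le\sqrt{r^2+2rR}$. For $\lambda\in(0,1]$ your proof is correct.

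The $\lambda=0$ bullet, however, is wrong as stated. Put $a:=\frac{1-t^2}{2\lambda}$. Then $r_c=-a+\sqrt{a^2+t^2}=\frac{t^2}{a+\sqrt{a^2+t^2}}\to 0$ as $\lambda\downarrow 0$, not $t$. Moreover, if $r_c$ really were $t$, your own dichotomy would place every $r\in(0,t)$ in the regime $r<r_c$. That would give the threshold $\varphi(r)=\frac{2r}{1+r^2}$, which is strictly smaller than the correct threshold $\frac{r(1+t^2)}{t(1+r^2)}$ of case~2. So the stated reason does not support your conclusion. Also, $r_c$ is not defined at $\lambda=0$, so even a correct limit would not settle that case by itself.

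The correct one-line reason is the paper's. For $\lambda=0$ you have $R=\frac{1-r^2}{2r}$, hence $r^2+2rR=1>t^2$ for every $r\in(0,1)$. The discriminant condition then forces $x^*\ge 1>t$, so alternative (ii) never occurs and only (i) remains. Together with $r\le t$, this gives case~2.
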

\begin{proof}
	Note at first that in the Poincar\'e ball model for $\mathbb{H}^d$, $H(r,u)$ corresponds to the intersection of $\mathbb{B}^d$ with a $(d-1)$-dimensional sphere of radius $R$ with $R=\frac{1-r^2}{2(\lambda+r)}$ and centre $(r+R)u$, see Section \ref{sec:Covering} and Figure \ref{fig: configuration_caps}. Thus, $H(r,u)\cap\left[0, \tanh(\frac{h}{2})e_1\right]\neq\emptyset$ if there exists $x\in \left[0, \tanh(\frac{h}{2})\right]$ such that
	\begin{align*}
		R^2=\lVert (r+R)u-xe_1\rVert^2=(r+R)^2-2(r+R)xu_1+x^2,
	\end{align*}
	where $u_1=\langle u,e_1\rangle$. Hence, $H(r,u)\cap\left[0, \tanh(\frac{h}{2})e_1\right]\neq\emptyset$ if and only if the equation $$p(x):=x^2-2(r+R)xu_1+(r+R)^2-R^2=0$$ has a solution in $\left[0, \tanh(\frac{h}{2})\right]$.
	It holds that $p(x)=0$ if and only if
	$$
	x_{1,2}=(r+R)u_1\pm\sqrt{(r+R)^2u_1^2-(r+R)^2+R^2}.
	$$
	If $r=0$, $x=0$ is a solution of $p(x)=0$. Thus, $H(0,u)\cap\left[0, \tanh(\frac{h}{2})e_1\right]\neq\emptyset$.
	
	For $r>0 $ note at first that if real solutions exist, it holds $x_{1,2}> 0$ if $u_1> 0$ and $x_{1,2}< 0$ if $u_1<0$. Thus, $p(x)=0$ has a solution in $\left[0, \tanh(\frac{h}{2})\right]$ if and only if $u_1>0$ and
	\begin{align}
		(r+R)^2u_1^2-(r+R)^2+R^2&\geq 0\label{(C1)}\\
		(r+R)u_1-\sqrt{(r+R)^2u_1^2-(r+R)^2+R^2}&\leq\tah\label{(C2)}
	\end{align}
	are fulfilled.
	Condition \eqref{(C1)} is equivalent to
	\begin{align*}
		(r+R)^2u_1^2&\geq r^2+2Rr,\nonumber\\
		\iff  u_1&\geq\frac{\sqrt{r^2+2Rr}}{r+R}=:f_1(r)
	\end{align*}
	Condition \eqref{(C2)} is fulfilled if either
	\begin{align*}
		&(r+R)u_1-\tah\leq0\nonumber\\
		&\iff u_1\leq \frac{\tah}{r+R}=:f_2(r)
	\end{align*}
	or $u_1\geq \frac{\tah}{r+R}$ and
	\begin{align*}
		&\left((r+R)u_1-\tah\right)^2\leq (r+R)^2u_1^2-r^2-2Rr\nonumber\\
		&\iff -2(r+R)u_1\tah+\tahq\leq-r^2-2Rr\nonumber\\
		&\iff u_1\geq \frac{\tahq+r^2+2Rr}{2(r+R)\tah}=:f_3(r).
	\end{align*}
	Since $2ab\leq a^2+b^2$ for $a,b\in\mathbb{R}$, we have
	$$f_1(r)=\frac{2\tah\sqrt{r^2+2Rr}}{2\tah(r+R)}\leq \frac{\tahq+r^2+2Rr}{2(r+R)\tah}=f_3(r).$$
	
	Altogether, in case that $r^2+2Rr<\tahq$, it holds that
	\begin{align*}
		f_1(r)\leq f_3(r)\leq \frac{2\tahq}{2(r+R)\tah}=f_2(r).
	\end{align*}
	Thus, in this case, \eqref{(C2)} is automatically fulfilled and $H(r,u)\cap\left[0, \tanh(\frac{h}{2})e_1\right]\neq\emptyset$ if $u_1\geq f_1(r).$
	In the case that $r^2+2Rr\geq \tahq$, we find
	\begin{align*}
		f_2(r)\leq \frac{\sqrt{r^2+2Rr}}{r+R}=f_1(r)\leq f_3(r).
	\end{align*}
	Hence, in this case \eqref{(C1)} and \eqref{(C2)} are fulfilled, i.e., $H(r,u)\cap\left[0, \tanh(\frac{h}{2})e_1\right]\neq\emptyset$, if $u_1\geq f_3(r).$ Inserting $R=\frac{1-r^2}{2(\lambda+r)}$ leads to
	\begin{align*}
		&r^2+2Rr=r^2+\frac{2r(1-r^2)}{2(\lambda+r)}=\frac{r^2\lambda+r}{\lambda+r},\\
		&r+R=r+\frac{1-r^2}{2(\lambda+r)}=\frac{r^2+2\lambda r+1}{2(\lambda+r)},\\
		&f_1(r)=\sqrt{\frac{r^2\lambda+r}{\lambda+r}}\cdot\frac{2(\lambda+r)}{r^2+2\lambda r+1}=\frac{2\sqrt{(r^2\lambda+r)(\lambda+r)}}{r^2+2\lambda r+1},\\
		&f_3(r)=\frac{\tahq(\lambda+r)+r^2\lambda+r}{(\lambda+r)}\cdot\frac{(\lambda+r)}{\tah(r^2+2\lambda r+1)}=\frac{\tahq(\lambda+r)+r^2\lambda+r}{\tah(r^2+2\lambda r+1)}.
	\end{align*}
	Then, for $\lambda\in(0,1]$, the condition $r^2+2Rr<\tah$ can be rewritten as follows:
	\begin{align*}
		&\frac{r^2\lambda+r}{\lambda+r}<\tahq\\
		&\iff r^2\lambda+r<\tahq(\lambda+r)\\
		&\iff r^2\lambda+r\left(1-\tahq\right)-\lambda\tahq<0\\
		&\iff r^2+\frac{1-\tahq}{\lambda}r-\tahq<0.
	\end{align*}
	Note that the equation $r^2+\frac{1-\tahq}{\lambda}r-\tahq=0$ has solutions
	$$
	r_{1,2}=\frac{\tahq-1}{2\lambda}\pm\sqrt{\frac{\left(1-\tahq\right)^2}{4\lambda^2}+\tahq}.
	$$
	Then, $r_1<0$ and, since $\sqrt{a^2+b^2}\leq a+b$ for $a,b\geq 0$, it holds that
	\begin{align*}
		r_2&=\frac{\tahq-1}{2\lambda}+\sqrt{\frac{\left(1-\tahq\right)^2}{4\lambda^2}+\tahq}\\&\leq \frac{\tahq-1}{2\lambda}+\frac{1-\tahq}{2\lambda}+\tah=\tah.
	\end{align*}
	Finally, with the fact that  $H(r,u)\cap\left[0, \tanh(\frac{h}{2})e_1\right]\neq\emptyset$ is only possible for $r\in[0,\tah],$ the lemma follows by using the equations above for $\lambda\in(0,1]$ to rewrite $r\in[0,\tah]$ with $\frac{r^2\lambda+r}{\lambda+r}<\tahq$ as $r\in[0,r_2]$ and, similarly, $r\in[0,\tah]$ with $\frac{r^2\lambda+r}{\lambda+r}\geq \tahq$ as $r\in[r_2,\tah]$.
	
	For $\lambda=0$ we always have $r^2+2Rr=1\geq \tah$. Thus, in this case $H(r,u)\cap\left[0, \tanh(\frac{h}{2})e_1\right]\neq\emptyset$ if $r\in[0,\tah]$ and $u_1\geq f_3(r)$.
\end{proof}

For a closed set $A\subset\H^d$ let $[A]_\lambda$ denote the collection of $\lambda$-geodesic hyperplanes having non-empty intersection with $A$ and $o$ on its non-convex side, i.e.,
\begin{equation}\label{eq:F_A}
	[A]_\lambda := \{H\in \mathrm{Hyp}_\lambda^o: H\cap A\neq\emptyset\}.
\end{equation}
To determine the measure of $[A]_\lambda$, where $A$ is a geodesic segment of given length, we need a formula for the spherical Lebesgue measure of spherical caps. As before, we denote by $\mathcal{S}(u,h)=\{x\in\mathbb{S}^{d-1}:\langle u,x\rangle\geq h\}$ the closed spherical cap centred at $u\in\mathbb{S}^{d-1}$ with height $0<h<1$. Then,
\begin{equation}\label{eq:VolSphericalCap}
	\sigma_{d-1}(\mathcal{S}(u,h)) =\frac{\Gamma(\frac{d}{2})}{\sqrt{\pi}\,\Gamma(\frac{d-1}{2})}\int_h^1(1-s^2)^{d-3\over 2}\,\dint s,
\end{equation}
see \cite[Equation (2.7)]{KSTBook}. Further, we recall the notation
$$
r_c=\frac{\tahq-1}{2\lambda}+\sqrt{\frac{\left(1-\tahq\right)^2}{4\lambda^2}+\tahq}.
$$
The first main result of this subsection is the following integral representation for the intersection measure.

\begin{coro}\label{cor:IntRepInvMeasure}
	Let $h>0$ and $\ell(h)$ be an arbitrary geodesic segment of length $h$ starting at $o$.
	\begin{itemize}
		\item[(a)] If $\lambda=0$ then
		\begin{align*}
			\mu_{\gamma,0}([{\ell(h)}]_0)&=\gamma\int_{0}^{\tah} \sigma_{d-1}\left(\mathcal{S}\left(e_1,\frac{\tahq r+r}{\tah(r^2+1)}\right)\right)\frac{2(1+r^2)^{d-1}}{(1-r^2)^d}\;\mathrm{d}r.
		\end{align*}
		
		\item[(b)] If $0<\lambda\leq 1$ then
		\begin{align*}
			&\mu_{\gamma,\lambda}([{\ell(h)}]_\lambda)
			=\gamma\int_0^{r_c} \sigma_{d-1}\left(\mathcal{S}\left(e_1,\frac{2\sqrt{(r^2\lambda+r)(\lambda+r)}}{r^2+2\lambda r+1}\right)\right)\frac{2(1+2\lambda r+r^2)^{d-1}}{(1-r^2)^d}\;\mathrm{d}r\\
			& \quad +\gamma\int_{r_c}^{\tah} \sigma_{d-1}\left(\mathcal{S}\left(e_1,\frac{\tahq(\lambda+r)+r^2\lambda+r}{\tah(r^2+2\lambda r+1)}\right)\right)\frac{2(1+2\lambda r+r^2)^{d-1}}{(1-r^2)^d}\;\mathrm{d}r.
		\end{align*}
	\end{itemize}
\end{coro}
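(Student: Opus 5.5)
The plan is to combine the explicit Poincar\'e-ball form \eqref{eq:invariant_measure_poincare_disc_model} of $\mu_{\gamma,\lambda}$ with the intersection criterion of Lemma \ref{lemma:schnitt_indicator}, and then carry out the integration over $u$ via the cap formula \eqref{eq:VolSphericalCap}.

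\textbf{Reduction.} First I will reduce to the reference segment. The measure $\mu_{\gamma,\lambda}$ is invariant under rotations fixing $o$. In the Poincar\'e ball model with $o$ at the centre, these rotations act on $H(r,u)$ by $u\mapsto Qu$, and $\sigma_{d-1}$ is rotation invariant. Hence $\mu_{\gamma,\lambda}([\ell(h)]_\lambda)$ depends only on $h$. We may therefore take $\ell(h)$ to be the Euclidean segment $[0,\tanh(\frac h2)e_1]$.

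\textbf{Fubini.} Next I will write
\[
\mu_{\gamma,\lambda}([\ell(h)]_\lambda)=\gamma\int_0^1\int_{\mathbb{S}^{d-1}}{\bf 1}\{H(r,u)\cap\ell(h)\neq\varnothing\}\,\sigma_{d-1}(\dint u)\,\frac{2(1+2\lambda r+r^2)^{d-1}}{(1-r^2)^d}\,\dint r
\]
and apply Fubini. For each fixed $r$, Lemma \ref{lemma:schnitt_indicator} identifies the inner indicator (up to the null set $r=0$):
\begin{itemize}
\item it vanishes for $r>\tanh(\frac h2)$;
\item otherwise it equals ${\bf 1}\{\langle u,e_1\rangle\geq t(r)\}$ for an explicit threshold $t(r)$.
\end{itemize}
The threshold is the one in case 2 when $\lambda=0$. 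When $\lambda>0$ it is given by case 3 on $(0,r_c)$ and by case 4 on $[r_c,\tanh(\frac h2)]$. The inner integral is then $\sigma_{d-1}(\mathcal{S}(e_1,t(r)))$, and splitting the $r$-range at $r_c$ gives the two summands in (b). The single point $r=0$, and the question of whether $r_c$ sits in the first or second integral, do not matter because they have Lebesgue measure zero.

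\textbf{Main obstacle.} The one point needing care is that \eqref{eq:VolSphericalCap} is stated for heights in $(0,1)$, whereas the thresholds $t(r)$ could a priori leave that interval. I expect them to lie in $[0,1]$ for the relevant $r$:
\begin{itemize}
\item The case-3 threshold equals $\varphi(r)\in(0,1)$ by \eqref{eq:varphi(r)}.
\item The case-4 threshold is $f_3(r)\geq f_1(r)\geq 0$.
\end{itemize}
If $f_3(r)>1$ for some $r$ in the range, the corresponding cap is empty. This is consistent with interpreting $\sigma_{d-1}(\mathcal{S}(e_1,t))=0$ for $t>1$. So I will simply read $\mathcal{S}(e_1,t)$ as defined by its set description, which makes the identity hold with no extra restriction. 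Measurability of the integrand is clear, since $t(r)$ is piecewise continuous. With these points settled, the corollary follows directly.
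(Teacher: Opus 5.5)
Your proposal is correct and takes the same route as the paper: the paper's proof writes $\mu_{\gamma,\lambda}([\ell(h)]_\lambda)$ as the indicator integral against the Poincar\'e-ball density \eqref{eq:invariant_measure_poincare_disc_model} and reads off the $u$-integral from Lemma~\ref{lemma:schnitt_indicator}, and your rotation-invariance reduction and Fubini step just make explicit what the paper leaves implicit. Your ``main obstacle'' is not actually an obstacle, since the statement is written directly in terms of $\sigma_{d-1}(\mathcal{S}(e_1,\cdot))$ and so \eqref{eq:VolSphericalCap} is never needed here; moreover, all thresholds lie in $(0,1]$ on the relevant $r$-range, with the case-4 threshold equal to $1$ exactly at $r=\tanh(h/2)$.
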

\begin{proof}
	For $0\leq\lambda\leq 1$ and by \eqref{eq:invariant_measure_poincare_disc_model} we have that $\mu_{\gamma,\lambda}([{\ell(h)}]_\lambda)$ is the same as
	$$
	\gamma\int_0^1\int_{S^{d-1}}{\bf 1}\Big\{H(r,u)\cap\Big[0, \tanh\Big(\frac{h}{2}\Big)e_1\Big]\neq\emptyset\Big\}\frac{2(1+2\lambda r+r^2)^{d-1}}{(1-r^2)^d}\;\sigma_{d-1}(\mathrm{d}u)\mathrm{d}r.
	$$
	The result follows from this and Lemma \ref{lemma:schnitt_indicator}.
\end{proof}

\subsection{Evaluation of the integrals for $\lambda=0$ and $\lambda=1$}

We were not able to analytically evaluate the integrals describing $\mu_{\gamma,\lambda}([\ell(h)]_\lambda)$ in Corollary \ref{cor:IntRepInvMeasure} for general $0\leq\lambda\leq 1$. However, for the two special cases $\lambda=0$ and $\lambda=1$ such an evaluation is possible as we shall demonstrate in this subsection. We start with the case $\lambda=0$.

\begin{lemma}\label{lem:Lambda=0Explicitly}
	Suppose $\lambda=0$. Then
	$$
	\mu_{\gamma,0}([{\ell(h)}]_0)  = \gamma\,\frac{\Gamma(\frac{d}{2})}{2\sqrt{\pi}\,\Gamma(\frac{d+1}{2})}\,h,\qquad h\geq 0.
	$$
\end{lemma}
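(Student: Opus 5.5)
Starting from Corollary \ref{cor:IntRepInvMeasure}(a), the plan is to show that the derivative in $h$ of the right-hand side is the constant $\gamma\,\Gamma(\frac d2)/(2\sqrt\pi\,\Gamma(\frac{d+1}2))$. Since the expression vanishes at $h=0$, linearity then follows. Write $t:=\tah$ and pass from $r$ back to the hyperbolic distance $s=2\atanh(r)$. Then $\frac{2(1+r^2)^{d-1}}{(1-r^2)^d}\,\dint r=\cosh^{d-1}(s)\,\dint s$ and $\frac{2r}{1+r^2}=\tanh(s)$. Also $\frac{t^2r+r}{t(r^2+1)}=\frac{1+t^2}{2t}\cdot\frac{2r}{1+r^2}=\frac{\tanh(s)}{\tanh(h)}$. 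Hence
\[
\mu_{\gamma,0}([\ell(h)]_0)=\gamma\int_0^h \sigma_{d-1}\Big(\mathcal S\Big(e_1,\frac{\tanh(s)}{\tanh(h)}\Big)\Big)\cosh^{d-1}(s)\,\dint s .
\]
This is the familiar condition: the hyperplane at distance $s$ with normal $u$ meets the segment if and only if $u_1\tanh(h)\ge\tanh(s)$.

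Next I insert \eqref{eq:VolSphericalCap} and exchange the order of integration. Setting $c_d':=\Gamma(\frac d2)/(\sqrt\pi\,\Gamma(\frac{d-1}2))$, the expression becomes
\[
\gamma c_d'\int_0^1 (1-v^2)^{\frac{d-3}{2}}\int_0^{\operatorname{artanh}(v\tanh h)}\cosh^{d-1}(s)\,\dint s\,\dint v .
\]
Rather than evaluating this directly, I will differentiate in $h$. The upper limit $\operatorname{artanh}(v\tanh h)=:a$ has derivative $v(1-\tanh^2 h)/(1-v^2\tanh^2h)$. Moreover $\cosh^2(a)=1/(1-v^2\tanh^2h)$, so
\[
\frac{\dint}{\dint h}=\gamma c_d'\int_0^1(1-v^2)^{\frac{d-3}{2}}\,\frac{v\,\mathrm{sech}^2(h)}{(1-v^2\tanh^2h)^{\frac{d+1}{2}}}\,\dint v .
\]
The remaining task, and the main obstacle, is to show that this integral does not depend on $h$. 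Substituting $w=v^2$, and then $w\mapsto$ the Möbius change $y=\frac{1-w}{1-w\tanh^2 h}$ (so that $1-y=\frac{w\,\mathrm{sech}^2h}{1-w\tanh^2h}$), I expect the integral to collapse to $\frac12\int_0^1 y^{\frac{d-3}{2}}\,\dint y=\frac1{d-1}$, independent of $h$. This is the standard invariance of Beta-type integrals under linear fractional maps of $[0,1]$. The Jacobian should be checked, but the exponents $\frac{d-3}2$ and $\frac{d+1}2$ sum correctly for this to work.

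If this holds, the derivative equals $\gamma c_d'/(d-1)$. Using $(d-1)\Gamma(\frac{d-1}2)=2\Gamma(\frac{d+1}2)$, this is $\gamma\Gamma(\frac d2)/(2\sqrt\pi\Gamma(\frac{d+1}2))$, which is the claimed constant. Integrating from $0$ to $h$ then gives the lemma. As an alternative, one could simply invoke the hyperbolic Crofton formula \eqref{eq:IndicatorToEulerChar} from \cite{BuehlerHugThaele}, but the direct computation above is self-contained.
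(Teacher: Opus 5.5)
Your proposal is correct and follows essentially the paper's route: the same substitution $r=\tanh(s/2)$ and differentiation in $h$ lead, after $w=v^2$, to exactly the paper's integral $\frac{1}{2\cosh^2 h}\int_0^1(1-w)^{\frac{d-3}{2}}(1-w\tanh^2 h)^{-\frac{d+1}{2}}\,\dint w$. The only differences are that you apply Fubini before differentiating, and that you evaluate this integral by a M\"obius substitution instead of citing the Euler integral together with ${}_2F_1(a,b;a;z)=(1-z)^{-b}$. The Jacobian check you left open does go through: with $T=\tanh^2 h$ one has $1-w=\frac{y(1-T)}{1-yT}$, $1-wT=\frac{1-T}{1-yT}$ and $|\dint w|=\frac{1-T}{(1-yT)^2}\,\dint y$, so all powers of $1-yT$ cancel and the integral equals $\frac{1}{1-T}\cdot\frac{2}{d-1}$, which gives the constant $\frac{1}{d-1}$ after multiplying by $\frac12\,\mathrm{sech}^2 h$.
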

\begin{proof}
	Recall from Corollary \ref{cor:IntRepInvMeasure} (a) that for $\lambda=0$ we have
	\begin{align*}
		\mu_{\gamma,0}([{\ell(h)}]_0)&=\gamma\int_{0}^{\tah} \sigma_{d-1}\left(\mathcal{S}\left(e_1,\frac{\tahq r+r}{\tah(r^2+1)}\right)\right)\frac{2(1+r^2)^{d-1}}{(1-r^2)^d}\;\mathrm{d}r.
	\end{align*}
	We apply the substitution $r=\tanh{x\over 2}$ and simplify. This yields
	\begin{align*}
		\mu_{\gamma,0}([{\ell(h)}]_0)&=\gamma\int_0^h\sigma_{d-1}\Big(\mathcal{S}\Big(e_1,{\tanh x\over\tanh h}\Big)\Big)\,\cosh^{d-1}x\,\dint x.
	\end{align*}
	Now, take the derivative with respect to $h$ and use the integral representation \eqref{eq:VolSphericalCap} for $\sigma_{d-1}(\mathcal{S}(u,h))$. This gives
	\begin{align*}
		{\dint\over\dint h}\mu_{\gamma,0}([{\ell(h)}]_0) = \gamma\frac{\Gamma(\frac{d}{2})}{\sqrt{\pi}\,\Gamma(\frac{d-1}{2})}\int_0^h\Big(1-{\tanh^2x\over\tanh^2h}\Big)^{d-3\over 2}{\tanh x\over\sinh^2h}\,\cosh^{d-1}x\,\dint x.
	\end{align*}
	Next, we applying the substitution $u={\tanh^2x\over\tanh^2h}$ and then use the Euler integral \cite[Equation (15.6.1)]{NIST}:
	\begin{align*}
		{\dint\over\dint h}\mu_{\gamma,0}(\mathcal{F}_{\ell(h)}) &= \gamma\frac{\Gamma(\frac{d}{2})}{\sqrt{\pi}\,\Gamma(\frac{d-1}{2})}{\tanh^2h\over 2\sinh^2 h}\int_0^1(1-u)^{d-3\over 2}(1-u\tanh^2h)^{-{d+1\over 2}}\,\dint u\\
		&=\gamma\frac{\Gamma(\frac{d}{2})}{\sqrt{\pi}\,\Gamma(\frac{d-1}{2})}{\tanh^2h\over 2\sinh^2 h}B\Big(1,{d-1\over 2}\Big){_2F_1}\Big({d+1\over 2},1;{d+1\over 2};\tanh^2h\Big)
	\end{align*}
	with the beta and the Gau\ss\ hypergeometric function. We have $B(1,{d-1\over 2})={2\over d-1}$ and since the first and the third argument of the hypergeometric function are the same, ${_2F_1(a,b;a;z)=(1-z)^{-b}}$ by \cite[Equation (15.4.6)]{NIST}, and this term simplifies to $1/(1-\tanh^2h)$. As a result,
	$$
	{\dint\over\dint h}\mu_{\gamma,0}([{\ell(h)}]_0) = \gamma\frac{\Gamma(\frac{d}{2})}{\sqrt{\pi}\,\Gamma(\frac{d-1}{2})}{\tanh^2h\over 2\sinh^2 h}\cdot {2\over d-1}\cdot{1\over 1-\tanh^2h} = \gamma\,\frac{\Gamma(\frac{d}{2})}{2\sqrt{\pi}\,\Gamma(\frac{d+1}{2})},
	$$
	independently of $h$, implying that $\mu_{\gamma,0}([{\ell(h)}]_0)$ must be an affine-linear function in $h$. Since $\mu_{\gamma,0}([{\ell(0)}]_0) =0$, it follows that
	$$
	\mu_{\gamma,0}([{\ell(h)}]_0)  = \gamma\,\frac{\Gamma(\frac{d}{2})}{2\sqrt{\pi}\,\Gamma(\frac{d+1}{2})}\,h
	$$
	for all $h\geq 0$.
\end{proof}

Next, we deal with the case $\lambda=1$ corresponding to horospheres.

\begin{lemma}\label{lem:Lambda=1Explicitly}
	Suppose $\lambda=1$. Then
	$$
	\mu_{\gamma,1}([{\ell(h)}]_1)  = \gamma\,\frac{\Gamma(\frac{d}{2})}{2\sqrt{\pi}\,\Gamma(\frac{d+1}{2})}\,h,\qquad h\geq 0.
	$$
\end{lemma}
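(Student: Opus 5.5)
The plan is to follow the strategy of Lemma~\ref{lem:Lambda=0Explicitly}: start from the integral representation in Corollary~\ref{cor:IntRepInvMeasure}(b) with $\lambda=1$, differentiate with respect to $h$, and show that the derivative equals the constant $\gamma\,\Gamma(\frac d2)/(2\sqrt\pi\,\Gamma(\frac{d+1}2))$. Since $\mu_{\gamma,1}([\ell(0)]_1)=0$, this gives the claim.

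Write $t:=\tah$. For $\lambda=1$ everything simplifies considerably.
\begin{itemize}
\item The critical radius becomes $r_c=\frac{t^2-1}{2}+\frac{1+t^2}{2}=t^2$.
\item The first threshold becomes $f_1(r)=\frac{2\sqrt r}{1+r}$, which does not depend on $h$.
\item The second threshold becomes $f_3(r)=\frac{t^2+r}{t(1+r)}$.
\item The density becomes $\frac{2(1+r)^{d-2}}{(1-r)^d}$.
\end{itemize}
Thus
\begin{align*}
\mu_{\gamma,1}([\ell(h)]_1)=\gamma\int_0^{t^2}\sigma_{d-1}(\mathcal S(e_1,f_1(r)))\,w(r)\,\dint r+\gamma\int_{t^2}^{t}\sigma_{d-1}(\mathcal S(e_1,f_3(r)))\,w(r)\,\dint r ,
\end{align*}
with $w(r)=2(1+r)^{d-2}(1-r)^{-d}$.

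Next I differentiate in $h$ (equivalently in $t$, using $\frac{\dint t}{\dint h}=\frac{1-t^2}{2}$) by the Leibniz rule. There are three boundary contributions.
\begin{itemize}
\item At $r=t^2$, the two boundary terms cancel. Indeed, $f_1(t^2)=\frac{2t}{1+t^2}=f_3(t^2)$, which is exactly the continuity built into Lemma~\ref{lemma:schnitt_indicator}: $f_1=f_3$ precisely where $r^2+2Rr=t^2$.
\item At $r=t$, we have $f_3(t)=1$, so the cap has measure zero and this term vanishes.
\end{itemize}
Only the interior term of the second integral survives. Since $\partial_t f_3=\frac{1-r/t^2}{1+r}$, formula \eqref{eq:VolSphericalCap} yields
\begin{align*}
\frac{\dint}{\dint h}\mu_{\gamma,1}([\ell(h)]_1)=\gamma\frac{\Gamma(\frac d2)}{\sqrt\pi\,\Gamma(\frac{d-1}2)}\cdot\frac{1-t^2}{2}\int_{t^2}^{t}\big(1-f_3(r)^2\big)^{\frac{d-3}{2}}\,\frac{r/t^2-1}{1+r}\,w(r)\,\dint r .
\end{align*}

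The main computational step is to evaluate this integral. I would substitute $r=t^2v$ with $v\in[1,1/t]$, so that $f_3=\frac{t(1+v)}{1+t^2v}$. The key observation is the factorization
\begin{align*}
1-f_3^2=\frac{(1-t^2)(1-t^2v^2)}{(1+t^2v)^2}.
\end{align*}
With this, the integrand becomes a product of powers of $1-t^2v^2$, $1+t^2v$, $1-t^2v$ and $v-1$. The powers of $(1+t^2v)$ coming from $f_3$ and from $w$ should combine cleanly, since $w$ contains $(1+r)^{d-2}=(1+t^2v)^{d-2}$. After that I would map $[1,1/t]$ onto $[0,1]$; the first thing to try is $v=1+(1/t-1)u$, or alternatively a variable $y$ with $y^2$ proportional to $1-f_3^2$, mimicking the substitution $u=\tanh^2x/\tanh^2h$ used for $\lambda=0$. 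The aim is to bring the integral to the Euler form of ${}_2F_1$ with two equal parameters, which then collapses via ${}_2F_1(a,b;a;z)=(1-z)^{-b}$. All powers of $t$ and of $1-t^2$ should then cancel against the prefactor $\frac{1-t^2}{2}$, leaving $B(1,\frac{d-1}{2})=\frac{2}{d-1}$ times $\frac{\Gamma(d/2)}{\sqrt\pi\,\Gamma((d-1)/2)}\cdot\frac12$, which equals the claimed constant.

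I expect this last reduction — choosing the substitution so that the leftover factors really form a degenerate hypergeometric function (or simply a Beta integral) — to be the main obstacle. As a fallback, I would first check the case $d=2$ by a direct elementary computation, where the exponent $\frac{d-3}{2}=-\frac12$ produces an arcsine-type integral, and then use that case to guide the choice of substitution for general $d$.
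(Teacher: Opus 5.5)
\textbf{Verdict: genuine gap.} Your setup matches the paper's proof step for step, and all of it is correct: the split at $r_c=t^2$, the density $2(1+r)^{d-2}(1-r)^{-d}$, the cancellation of the two boundary terms at $r=t^2$, the vanishing of the term at $r=t$ (since $f_3(t)=1$), and $\partial_t f_3=\frac{1-r/t^2}{1+r}$.

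However, the proof stops exactly where the content of the lemma lies. You never evaluate the remaining $r$-integral, and the routes you sketch are not shown to work. For $d\neq 3$, the linear map $v=1+(1/t-1)u$ leaves four distinct linear factors in $u$: namely $u$, $1-u$, $1+\frac{1-t}{1+t}u$ and $1-\frac{t}{1+t}u$, with exponents $1,\frac{d-3}{2},\frac{d-3}{2},-d$. That is an Appell-type integral, not the Euler integral of a degenerate ${}_2F_1$ you are aiming for. A variable with $y^2\propto 1-f_3^2=\frac{(1-t^2)(t^2-r^2)}{t^2(1+r)^2}$ has the wrong denominator. As it stands, the proposal is a plan with its decisive step missing.

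The missing idea is that no special functions are needed. The powers of $1+r$ cancel completely, and the integrand becomes
\[
\frac{2(1-t^2)^{\frac{d-3}{2}}}{t^{d-1}}\cdot\frac{(t^2-r^2)^{\frac{d-3}{2}}(r-t^2)}{(1-r)^d}.
\]
The second factor is an exact derivative:
\[
\frac{(t^2-r^2)^{\frac{d-3}{2}}(t^2-r)}{(1-r)^d}=\frac{1}{d-1}\,\frac{\mathrm{d}}{\mathrm{d}r}\,\frac{(t^2-r^2)^{\frac{d-1}{2}}}{(1-r)^{d-1}}.
\]
Equivalently, set $q:=\frac{t^2-r^2}{(1-r)^2}$. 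Then $q'(r)=\frac{2(t^2-r)}{(1-r)^3}$, and the integral becomes $-\frac12\int q^{\frac{d-3}{2}}\,\mathrm{d}q$.

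Since $q(t)=0$ and $q(t^2)=\frac{t^2}{1-t^2}$, the $r$-integral of the second factor equals $\frac{1}{d-1}\,\frac{t^{d-1}}{(1-t^2)^{(d-1)/2}}$. Multiplying by $\frac{2(1-t^2)^{(d-3)/2}}{t^{d-1}}$ and by your prefactor $\frac{1-t^2}{2}$ leaves exactly $\frac{1}{d-1}$. Hence the $h$-derivative is
\[
\gamma\,\frac{\Gamma(\frac d2)}{\sqrt\pi\,\Gamma(\frac{d-1}{2})}\cdot\frac{1}{d-1}=\gamma\,\frac{\Gamma(\frac d2)}{2\sqrt\pi\,\Gamma(\frac{d+1}{2})}.
\]
This exact-antiderivative observation is precisely how the paper closes the argument.
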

\begin{proof}
	We have $\mu_{\gamma,1}([{\ell(h)}]_1) = I_1(h) + I_2(h)$ with
	\begin{align*}
		I_1(h) &:=\gamma\int_0^{t}\sigma_{d-1}\Big(\mathcal{S}\Big(e_1,{2\sqrt{r}\over r+1}\Big)\Big)W(r)\,\dint r,\\
		I_2(h) &:= \gamma\int_{t^2}^{t}\sigma_{d-1}\Big(\mathcal{S}\Big(e_1,{t^2+r\over t(1+r)}\Big)\Big)W(r)\,\dint r,
	\end{align*}
	where we put $t:=\tah$ and $W(r):=2{(1+r)^{2(d-1)}\over(1-r^2)^d}$. Next, we set $z(r,t):={t^2+r\over t(1+r)}$, $B(r,t):=\sigma_{d-1}(\mathcal{S}(e_1,z(r,t)))W(r)$ and $A(r):=\sigma_{d-1}\Big(\mathcal{S}\Big(e_1,{2\sqrt{r}\over r+1}\Big)\Big)W(r)$. Then $I_1(h)$ and $I_2(h)$ can be written as a function of $t$ as
	$$
	I_1(t) = \gamma\int_0^{t^2}A(r)\,\dint r\qquad\text{and}\qquad I_2(t)=\gamma\int_{t^2}^tB(r,t)\,\dint r.
	$$
	Note that $A(t^2)=B(t^2,t)$, $z(t,t)=1$ and so $B(t,t)=0$. We can now differentiate with respect to $t$ according to the Leibniz rule:
	$$
	{\dint\over\dint t}(I_1(t)+I_2(t)) = \gamma\int_{t^2}^t\partial_t B(r,t)\,\dint r.
	$$
	By definition of $B(r,t)$, the chain rule and \eqref{eq:VolSphericalCap} we get
	$$
	\partial_t B(r,t) = -\frac{\Gamma(\frac{d}{2})}{\sqrt{\pi}\,\Gamma(\frac{d-1}{2})}(1-z(r,t)^2)^{d-3\over 2}\partial_t z(r,t)W(r).
	$$
	From $\partial_t z(r,t)={t^2-r\over t^2(1+r)}$ and the definitions of $z(r,t)$ and $W(r)$ it follows after simplification that
	$$
	\partial_t B(r,t) = -\frac{\Gamma(\frac{d}{2})}{\sqrt{\pi}\,\Gamma(\frac{d-1}{2})} {2(1-t^2)^{d-3\over 2}\over t^{d-1}}{(t^2-r^2)^{d-3\over 2}(t^2-r)\over(1-r)^d}.
	$$
	Thus,
	$$
	{\dint\over\dint t}(I_1(t)+I_2(t)) = -\gamma\frac{\Gamma(\frac{d}{2})}{\sqrt{\pi}\,\Gamma(\frac{d-1}{2})}{2(1-t^2)^{d-3\over 2}\over(d-1)t^{d-1}}\int_{t^2}^t H'(r)\,\dint r
	$$
	if we put $H(r):={(t^2-r^2)^{d-1\over 2}\over(1-r)^{d-1}}$. But
	$$
	\int_{t^2}^t H'(r)\,\dint r = H(t) - H(t^2) = 0 - {t^{d-1}\over(1-t^2)^{d-1\over 2}}
	$$
	and hence
	$$
	{\dint\over\dint t}(I_1(t)+I_2(t)) = \gamma\frac{\Gamma(\frac{d}{2})}{\sqrt{\pi}\,\Gamma(\frac{d-1}{2})}{2(1-t^2)^{d-3\over 2}\over(d-1)t^{d-1}}{t^{d-1}\over(1-t^2)^{d-1\over 2}} = \gamma\frac{\Gamma(\frac{d}{2})}{\sqrt{\pi}\,\Gamma(\frac{d-1}{2})}{2\over d-1}{1\over 1-t^2}.
	$$
	Now, we recall that $t=\tah$. Then the chain rule implies that
	\begin{align*}
	{\dint\over\dint h}(I_1(h)+I_2(h)) &= {\dint\over\dint t}(I_1(t)+I_2(t)){\dint t\over\dint h}\left(\tah\right) \\
	&= \gamma\frac{\Gamma(\frac{d}{2})}{\sqrt{\pi}\,\Gamma(\frac{d-1}{2})}{2\over d-1}{1\over 1-t^2}{1-t^2\over 2} \\
	&= \gamma\,\frac{\Gamma(\frac{d}{2})}{2\sqrt{\pi}\,\Gamma(\frac{d+1}{2})},
	\end{align*}
	implying that $\mu_{\gamma,1}([{\ell(h)}]_1)$ must be an affine-linear function of $h$. But again, since $\mu_{\gamma,1}([{\ell(0)}]_1)=0$, we conclude that
	$$
	\mu_{\gamma,1}([{\ell(h)}]_1) = \gamma\,\frac{\Gamma(\frac{d}{2})}{2\sqrt{\pi}\,\Gamma(\frac{d+1}{2})}\,h
	$$
	for all $h\geq 0$.
\end{proof}

\subsection{Linearity for all $0\leq\lambda\leq 1$}

As mentioned in the previous subsection, for $0<\lambda<1$ we were not able to analytically evaluate the integrals in the representation for $\mu_{\gamma,\lambda}([{\ell(h)}]_\lambda)$ in Corollary \ref{cor:IntRepInvMeasure}. To show that also in this case $\mu_{\gamma,\lambda}([{\ell(h)}]_\lambda)$ is a linear function and the same as for $\lambda=0$ in Lemma \ref{lem:Lambda=0Explicitly} and $\lambda=1$ in Lemma \ref{lem:Lambda=1Explicitly} we take a different route. We start with the following result.

\begin{lemma}\label{lem:Slope}
	Suppose that $0\leq\lambda\leq 1$. Then
	$$
	\lim_{h\downarrow 0}{\mu_{\gamma,\lambda}([{\ell(h)}]_\lambda)\over h} =  \gamma\,\frac{\Gamma(\frac{d}{2})}{2\sqrt{\pi}\,\Gamma(\frac{d+1}{2})}.
	$$
\end{lemma}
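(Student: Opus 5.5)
The plan is to work directly with the integral representation in Corollary~\ref{cor:IntRepInvMeasure} and take $h\downarrow 0$, writing $t:=\tah$. Since $t=h/2+O(h^3)$, it suffices to show that $\mu_{\gamma,\lambda}([\ell(h)]_\lambda)=2t\,\gamma\,\kappa_d+o(t)$, where I expect the constant to be $\kappa_d:=\int_0^1\sigma_{d-1}(\mathcal{S}(e_1,s))\,\dint s$. The case $\lambda=0$ is already covered by Lemma~\ref{lem:Lambda=0Explicitly}, and it is in any case the special case $r_c=0$ of the argument below. So I will focus on $0<\lambda\le 1$ and split the integral into the part over $[0,r_c]$ and the part over $[r_c,t]$.

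\emph{Step 1: the first integral is negligible.} By definition, $r_c$ is the positive root of $\lambda r^2+(1-t^2)r-\lambda t^2=0$, so $r_c=\lambda t^2/(1-t^2)+O(t^4)=O(t^2)$. On $[0,r_c]$ the spherical-cap factor is bounded by $1$, and the weight $W_\lambda(r):=2(1+2\lambda r+r^2)^{d-1}(1-r^2)^{-d}$ is bounded by a constant for $r\le 1/2$. Hence the first integral is $O(t^2)=o(h)$.

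\emph{Step 2: rescale the second integral.} Substitute $r=ts$ with $s\in[r_c/t,1]$, where $r_c/t=O(t)\to0$. The threshold then becomes
\[
z_t(s)=\frac{s+\lambda t+t^2 s+\lambda t s^2}{1+2\lambda t s+t^2s^2},
\]
and this satisfies $|z_t(s)-s|\le Ct$ uniformly in $s\in[0,1]$. Moreover, $W_\lambda(ts)\to 2$ uniformly on $[0,1]$. The map $x\mapsto\sigma_{d-1}(\mathcal{S}(e_1,x))$, given by \eqref{eq:VolSphericalCap}, is continuous on $[0,1]$ and hence uniformly continuous there, and it is bounded by $1$. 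This gives
\[
\frac1t\int_{r_c}^{t}\sigma_{d-1}(\mathcal{S}(e_1,z))\,W_\lambda(r)\,\dint r\;\longrightarrow\;2\int_0^1\sigma_{d-1}(\mathcal{S}(e_1,s))\,\dint s .
\]
One minor point: $z_t(s)$ may slightly exceed $1$ near $s=1$. There the cap is empty or a point, and extending the cap-volume function by $0$ for arguments above $1$ keeps it uniformly continuous, so this causes no harm. Combining with Step~1 yields the limit $\gamma\kappa_d$ for $\mu_{\gamma,\lambda}([\ell(h)]_\lambda)/h$.

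\emph{Step 3: evaluate the constant.} By \eqref{eq:VolSphericalCap} and Fubini,
\[
\kappa_d=c\int_0^1\!\!\int_s^1(1-v^2)^{\frac{d-3}2}\,\dint v\,\dint s=c\int_0^1 v(1-v^2)^{\frac{d-3}2}\,\dint v=\frac{c}{d-1},
\qquad c=\frac{\Gamma(\frac d2)}{\sqrt\pi\,\Gamma(\frac{d-1}2)} .
\]
Since $\Gamma(\frac{d+1}2)=\frac{d-1}2\Gamma(\frac{d-1}2)$, we get $\kappa_d=\Gamma(\frac d2)/(2\sqrt\pi\,\Gamma(\frac{d+1}2))$, which is the claimed constant.

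The main obstacle is Step 2, namely making the uniform approximation $z_t(s)\approx s$ rigorous near the endpoints. Near $s=1$ the threshold can exceed $1$, which the extension by $0$ handles. Near the lower limit $s=r_c/t$ the only issue is that the domain of integration moves, and this contributes only $O(t)$ to the rescaled integral. Once uniform continuity of the cap volume is invoked, everything else is bookkeeping.
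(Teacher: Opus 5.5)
Your proof is correct and follows essentially the paper's argument: the integral over $[0,r_c]$ is negligible because $r_c=O(h^2)$, the second integral is rescaled linearly in $h$ so that it tends to $\int_0^1\sigma_{d-1}(\mathcal{S}(e_1,u))\,\dint u$ times $h$ (times $\gamma$), and that constant is evaluated by Fubini exactly as you do. The differences are cosmetic: you rescale by $t=\tanh(h/2)$ instead of $h$, you bound the first integral by $O(t^2)$ instead of computing its leading term $\gamma\lambda h^2/4$, and you invoke uniform continuity of the cap volume explicitly. Your precaution about the threshold exceeding $1$ is unnecessary, since
\[
1-z_t(s)=\frac{(1-s)\bigl(1-\lambda t(1-s)-t^2s\bigr)}{1+2\lambda ts+t^2s^2}\ge 0
\qquad\text{for } s\in[0,1].
\]
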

\begin{proof}
	Using $\tanh(x)=x-x^3/3+O(x^5)$ as $x\to 0$ one first obtains
	$$
	r_c={\lambda\over 4}h^2+{\lambda\over 48}(1-3\lambda^2)h^4+O(h^6),\qquad h\downarrow 0.
	$$
	Call $F(r)$ the integrand in the first integral $I_1(h)$ from $0$ to $r_c$ in Corollary \ref{cor:IntRepInvMeasure} (b). Then $F(0)={1\over 2}\cdot 2=1$ by \eqref{eq:VolSphericalCap}. Moreover,
	$$
	\frac{2\sqrt{(r^2\lambda+r)(\lambda+r)}}{r^2+2\lambda r+1} = 2\sqrt{\lambda r} + {(1-\lambda^2)\over\sqrt{\lambda}}r^{3/2} + O(r^{5/2}),
	$$
	implying that $F(r)=F(0)+O(\sqrt{r})$ as $r\downarrow 0$. It follows that
	$$
	{1\over\gamma}I_1(h) = \int_0^{r_c(h)}F(r)\,\dint r = F(0)r_c(h) + \int_0^{r_c(h)}F(r)-F(0)\,\dint r = r_c(h)+O(r_c(h)^{3/2}).
	$$
	Thus,
	\begin{equation}\label{eq:AsymptoticsI1}
		I_1(h) = \gamma{\lambda\over 4}h^2 + o(h^2),\qquad h\downarrow 0.
	\end{equation}
	Next, we consider the integral $I_2(h)$ from $r_c$ to $\tanh({h\over 2})$ in Corollary \ref{cor:IntRepInvMeasure} (b). Apply the substitution $r=sh$ with $r_c/h\leq s\leq\tanh({h\over 2})/h$. Then,
	$$
	{1\over\gamma}I_2(h) = h\int_{r_c/h}^{{1\over h}\tanh({h\over 2})}\sigma_{d-1}(\mathcal{S}(e_1,A(h,s)))\,B(h,s)\,\dint s
	$$
	with the abbreviations
	\begin{align*}
		A(h,s) &:= {\tahq(\lambda+sh)+s^2h^2\lambda +sh\over\tah(1+2\lambda sh+s^2h^2)},
		\qquad B(h,s) := {2(1+2\lambda sh+s^2h^2)^{d-1}\over(1-s^2h^2)^d}.
	\end{align*}
	Using the expansion of the hyperbolic tangent function from above, $A(h,s)=2s(1+O(h))$ and $B(h,s)=2+O(h)$ as $h\downarrow 0$ uniformly for $0\leq s<1/2$ (note that ${1\over h}\tah\to 1/2$, whereas $r_c/h\to 0$). It follows that
	$$
	I_2(h)  = 2\gamma h \int_0^{1/2}\sigma_{d-1}(\mathcal{S}(e_1,2s))\,\dint s+o(h).
	$$
	Now, use the formula \eqref{eq:VolSphericalCap} for $\sigma_{d-1}(\mathcal{S}(e_1,2s))$ and Fubini's theorem:
	\begin{align*}
		2\int_0^{1/2}\sigma_{d-1}(\mathcal{S}(e_1,2s))\,\dint s &= \int_0^1\sigma_{d-1}(\mathcal{S}(e_1,u))\,\dint u\\
		&=\frac{\Gamma(\frac{d}{2})}{\sqrt{\pi}\,\Gamma(\frac{d-1}{2})}\int_0^1(1-t^2)^{d-3\over 2}\Big(\int_0^t\,\dint u\Big)\,\dint t\\
		&=\frac{\Gamma(\frac{d}{2})}{\sqrt{\pi}\,\Gamma(\frac{d-1}{2})}\int_0^1(1-t^2)^{d-3\over 2}t\,\dint t\\
		&=\frac{\Gamma(\frac{d}{2})}{2\sqrt{\pi}\,\Gamma(\frac{d-1}{2})}\int_0^1(1-x)^{d-3\over 2} \,\dint x\\
		&=\frac{\Gamma(\frac{d}{2})}{2\sqrt{\pi}\,\Gamma(\frac{d+1}{2})},
	\end{align*}
	where we applied the substitutions $u=2s$ and $x=t^2$. As a result,
	$$
	I_2(h) = \gamma\,\frac{\Gamma(\frac{d}{2})}{2\sqrt{\pi}\,\Gamma(\frac{d+1}{2})}\,h+o(h).
	$$
	If we combine this with \eqref{eq:AsymptoticsI1}, divide by $h$ and take the limit as $h\downarrow 0$ we get
	$$
	\lim_{h\downarrow 0}{\mu_{\gamma,\lambda}([{\ell(h)}]_\lambda)\over h} = \lim_{h\downarrow 0}{I_1(h)+I_2(h)\over h} =  \gamma\,\frac{\Gamma(\frac{d}{2})}{2\sqrt{\pi}\,\Gamma(\frac{d+1}{2})}
	$$
	and the proof is complete.
\end{proof}

The next step consists in establishing the following fact.

\begin{lemma}\label{lem:Linear}
	Suppose that $0\leq\lambda\leq 1$. Then $\mu_{\gamma,\lambda}([{\ell(h)}]_\lambda)$ is a linear function in $h$ (with zero intercept).
\end{lemma}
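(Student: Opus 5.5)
The plan is to avoid the explicit integrals of Corollary \ref{cor:IntRepInvMeasure}. Instead I relate $\mu_{\gamma,\lambda}([\ell(h)]_\lambda)$ to a Crofton-type quantity for the full isometry-invariant measure $\nu_\lambda$ on $\mathrm{Hyp}_\lambda$, which counts intersection points with multiplicity.

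For a geodesic segment $\ell$ and $H\in\mathrm{Hyp}_\lambda$, let $N(H,\ell):=\#(H\cap\ell)\in\{0,1,2\}$ (finite for $\nu_\lambda$-a.e.\ $H$), and set
\[
\Phi(\ell):=\int_{\mathrm{Hyp}_\lambda}N(H,\ell)\,\nu_\lambda(\dint H).
\]
This is finite, since $N\le 2$ and $\nu_\lambda$ of the hyperplanes hitting a compact ball is finite by \eqref{eq:InvariantMeasure}. Because $\nu_\lambda$ is isometry invariant, $\Phi(\ell)=\phi(h)$ depends only on the length $h$ of $\ell$. Splitting a segment of length $h_1+h_2$ at an interior point gives $\phi(h_1+h_2)=\phi(h_1)+\phi(h_2)$. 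Here I use that the $\nu_\lambda$-measure of hyperplanes through a fixed point, or tangent to $\ell$, is zero. Moreover $\phi\ge 0$, so $\phi$ is monotone. A monotone additive function is linear: $\phi(h)=ch$.

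Next, write $C_H$ for the convex side of $H$; this is the horoball or the convex equidistant region, and for $\lambda=0$ either half-space. Let $x=x_h$ be the far endpoint of $\ell(h)=[o,x]$. Since $C_H$ is convex, a geodesic segment meets $H=\partial C_H$ in at most two points. Concretely:
\begin{itemize}
\item if $o\notin C_H$, the segment hits $H$ once iff $x\in C_H$, and twice iff $x\notin C_H$ but $\ell$ passes through $C_H$;
\item if $o\in C_H$, the segment hits $H$ (exactly once) iff $x\notin C_H$.
\end{itemize}
Define
\[
a:=\nu_\lambda\{o\notin C_H,\ x\in C_H\},\qquad a':=\nu_\lambda\{o\in C_H,\ x\notin C_H\},
\]
\[
m_2:=\nu_\lambda\{o,x\notin C_H,\ \ell\cap C_H\ne\varnothing\}.
\]
Then $\nu_\lambda^o([\ell(h)]_\lambda)=a+m_2$ and $\phi(h)=a+2m_2+a'$. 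The isometry reflecting $\ell$ onto itself and swapping $o$ and $x$ preserves $\nu_\lambda$ and maps $C_H$ to $C_{gH}$, so $a=a'$. Hence
\[
\mu_{\gamma,\lambda}([\ell(h)]_\lambda)=\gamma\,\nu_\lambda^o([\ell(h)]_\lambda)=\tfrac{\gamma}{2}\,\phi(h)=\tfrac{\gamma c}{2}\,h,
\]
which is linear with zero intercept. For $\lambda=0$ the same bookkeeping works, with $m_2=0$. Combined with Lemma \ref{lem:Slope}, this identifies the slope as $\gamma\,\frac{\Gamma(\frac d2)}{2\sqrt\pi\,\Gamma(\frac{d+1}2)}$.

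The main obstacle I expect is the geometric bookkeeping in the second step. I must check carefully that the sign convention for $s$ matches "$o\in C_H$" exactly, and that the boundary cases ($o$ or $x$ on $H$, tangencies, and the $\lambda=0$ ambiguity of which side is "convex") are $\nu_\lambda$-null. For $\lambda=0$ both sides are convex, and the parametrization chooses one side via $u$. In that case I will rely on the convention of \eqref{eq:InvariantMeasure}: each geometric hyperplane appears twice, once with $s>0$ and once with $s<0$, and the argument goes through with $m_2=0$. The measurability and the null-set statements follow from the explicit density in \eqref{eq:InvariantMeasure}.
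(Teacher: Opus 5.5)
Your proof is correct, and it takes a different route from the paper's.

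The paper works on the line $L\supset\ell(h)$. To every $\lambda$-geodesic hyperplane meeting $L$ it assigns the lower endpoint $C(H)$ of the interval in which the convex side of $H$ meets $\overline L$, and pushes $\nu_\lambda$ forward under $C$. The image is locally finite on $L$ and invariant under hyperbolic translations along $L$, so it is a multiple of arc length. The paper then checks that $H\in[\ell(h)]_\lambda$ if and only if $C(H)\in\ell(h)$. In this way it selects exactly one intersection point per relevant hyperplane. That encodes the one-sided condition $o\notin C_H$ directly and needs only translation invariance along $L$. The price is a somewhat fiddly case distinction in defining $C(H)$: tangencies, and single transversal crossings with $C(H)=-e_1$ or $D(H)=e_1$.

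You instead count all intersection points with multiplicity. Additivity in the length is then automatic, and linearity follows from the Cauchy equation. You recover the one-sided quantity from the reflection (or half-turn) swapping $o$ and $x$, which gives $a=a'$.

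What your route buys is the identity
\[
\nu_\lambda^o([\ell(h)]_\lambda)=\tfrac12\int_{\mathrm{Hyp}_\lambda}\#\bigl(H\cap\ell(h)\bigr)\,\nu_\lambda(\dint H).
\]
So the restricted intersection measure is exactly half of a Crofton-type integral for the full invariant measure. This has three consequences:
\begin{itemize}
\item It explains the factor $\tfrac12$ discussed in the paper's final remarks.
\item It treats $\lambda=0$ and $\lambda>0$ uniformly (your $m_2=0$ case), instead of deferring $\lambda=0$ to Lemma~\ref{lem:Lambda=0Explicitly}.
\item It makes the $\lambda$-independence of the slope less mysterious. The constant of the full count is determined by the behaviour of $\nu_\lambda$ near $s=0$, where the density $(\cosh s+\lambda\sinh s)^{d-1}$ equals $1$ for every $\lambda$.
\end{itemize}

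The cost is that you need invariance of $\nu_\lambda$ under an isometry reversing $\ell$. This is fine: \eqref{eq:InvariantMeasure} is also invariant under reflections fixing $o$, or you can use a half-turn. You also need the null-set statements for tangencies and for hyperplanes through a given point. You identify both correctly, and both follow from the explicit parametrization.
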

\begin{proof}
Since the case $\lambda=0$ was handled in Lemma \ref{lem:Lambda=0Explicitly} and also in \cite{BuehlerHugThaele}, we restrict our attention to the case $0<\lambda\leq 1$ in what follows.
Let $e_1$ be a unit basis vector in $\R^d$. Consider the geodesic line $L=\{ c e_1: c\in (-1,1)\}$ in the Poincar\'e ball model $\mathbb{B}^d$ and let $\overline L := \{ c e_1: c\in [-1,1]\}$ be its closure. We identify $\ell(h)$ with the geodesic segment $[0, (\tanh \frac h2) e_1] \subset L$.  We say that the point $c e_1\in \overline L$ is below $de_1\in \overline L$ if $-1 \leq c \leq  d \leq  1$.

Every $\lambda$-geodesic hyperplane $H\in \mathrm{Hyp}_\lambda$  (which may have $o$ on its convex side, or not)  intersects $L$ in $0,1$ or $2$ points. Let  $Q_\lambda \subset  \mathrm{Hyp}_\lambda$  be the set of all $\lambda$-geodesic hyperplanes $H\in \mathrm{Hyp}_\lambda$ that intersect $L$ (in one or two points). If $H\in Q_\lambda$ intersects $L$ in two points, we denote these by  $C(H)\in L$ and $D(H)\in L$ with the convention that $C(H)$ is below $D(H)$.  If $H\in Q_\lambda$ is tangent to $L$, we denote the point of tangency by $C(H) = D(H)\in L$. Finally, if $H\in Q_\lambda$ intersects $L$ in precisely one point and the intersection is transversal, then the convex side of $H$  contains either $e_1$ or $-e_1$. If $e_1$ is on the convex side, we denote the unique intersection points of $H$ and $L$ by $C(H)$ and put $D(H) := e_1$. On the other hand, if $-e_1$ is on the convex side, we denote the unique intersection point of $L$ and $H$  by $D(H)$ and put $C(H) := -e_1$. 

This defines two maps $C, D: Q_\lambda \to \overline L$ such that $C(H)$ is below  $D(H)$ for every $H\in Q_\lambda$. Note that the intersection of the convex side of $H$ with $\overline L$ is the interval $[C(H), D(H)]$ on $\overline{L}$.
Observe also that $Q_\lambda$ is invariant under hyperbolic isometries $\varphi$ of $\mathbb B^{d}$ that map $L$ to $L$ and satisfy $\varphi(e_1) = e_1$, $\varphi(-e_1)= -e_1$. (The latter means that $\varphi$ is orientation-preserving on $L$. Note that every isometry admits a unique continuous extension to the closure of $\mathbb B^d$.)
The map $C: Q_\lambda \to \overline L$ commutes with all hyperbolic isometries $\varphi$ that map $L$ to $L$ and are orientation-preserving on $L$. More precisely, $C(\varphi(H)) = \varphi(C(H))$ for all $H\in Q_\lambda$. Let $m_\lambda$ be the  push-forward of the invariant measure $\nu_\lambda$ (restricted to $Q_\lambda$) under the map $C$. Then $m_\lambda$  is a measure on $\overline L$ invariant under hyperbolic orientation-preserving isometries of $L$. Also, $m_\lambda$ is locally finite on $L$ (but may assign infinite mass to $-e_1$), since for any interval $I_a$ of length $a$ centred at $o$ we have
\begin{align*}
m_\lambda(I_a) &\leq \nu_\lambda(\{H\in{\rm Hyp}_\lambda:H\cap I_a\neq\varnothing\})\leq\int_{-a/2}^{a/2}(\cosh(s)+\lambda\sinh(s))^{d-1}\,\dint s<\infty.
\end{align*}
It follows that the restriction of $m_\lambda$ to $L$  is a multiple of the hyperbolic length measure on $L$. In particular, $m_\lambda (\ell(h))$ is a linear function of $h>0$.

Next we  observe that for a hyperplane $H\in Q_\lambda$,    $C(H) \in \ell(h)$ is equivalent to  $H\in [{\ell(h)}]_\lambda$. (Recall that $H\in [{\ell(h)}]_\lambda$ means that $H\in \mathrm{Hyp}_\lambda^o$ and $H\cap \ell(h) \neq \varnothing$.) Indeed, if $C(H) \in \ell(h)$ then both $C(H)$ and $D(H)$ are ``above''  the origin and hence the origin is not on the convex side of $H$. So $H\in \mathrm{Hyp}_\lambda^o$.  Since $H$ intersects the geodesic segment $\ell(h)$ at $C(H)$, we conclude that $H\in [{\ell(h)}]_\lambda$. Conversely, if $H\in [{\ell(h)}]_\lambda$, then by definition of $[{\ell(h)}]_\lambda$,  $0$ is not on the convex side of $H$ and  $H$ intersects $\ell(h)$. This implies that $C(H)$ and $D(H)$ are both above $0$ and since $H$ intersects $\ell(h)$, we conclude $C(H) \in \ell(h)$, proving the equivalence. 

The equivalence we just proved shows that $\mu_{\gamma,\lambda}([{\ell(h)}]_\lambda) = \gamma m_\lambda (\ell(h))$. Since $m_\lambda (\ell(h))$ is linear in  $h>0$, the proof is complete.
\end{proof}

If we combine Lemma \ref{lem:Slope} with Lemma \ref{lem:Linear} we obtain the following generalization of Lemma \ref{lem:Lambda=0Explicitly} and Lemma \ref{lem:Lambda=1Explicitly}.

\begin{coro}\label{coro:mu(F_[o,l])}
	Suppose $0\leq\lambda\leq 1$. Then
	$$
	\mu_{\gamma,\lambda}([{\ell(h)}]_\lambda)  = \gamma\,\frac{\Gamma(\frac{d}{2})}{2\sqrt{\pi}\,\Gamma(\frac{d+1}{2})}\,h,\qquad h\geq 0.
	$$
\end{coro}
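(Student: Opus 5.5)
The plan is to combine Lemma \ref{lem:Linear} with Lemma \ref{lem:Slope}; no new computation should be needed. For fixed $\lambda\in[0,1]$, write $g(h):=\mu_{\gamma,\lambda}([\ell(h)]_\lambda)$ for $h\geq 0$.

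By Lemma \ref{lem:Linear}, there is a constant $c_\lambda\geq 0$ such that $g(h)=c_\lambda h$ for all $h>0$. The proof of that lemma gives this directly: $g(h)=\gamma m_\lambda(\ell(h))$, and the restriction of $m_\lambda$ to $L$ is a multiple of hyperbolic length. For $h=0$ the segment $\ell(0)$ is the single point $o$. A hyperplane containing $o$ has $s=0$ in the parametrisation, and the set $\{s=0\}$ is a $\nu_\lambda$-null set, so $g(0)=0$ and the formula $g(h)=c_\lambda h$ also holds at $h=0$.

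It remains to identify $c_\lambda$. Since $g(h)/h=c_\lambda$ for every $h>0$, the limit $\lim_{h\downarrow 0}g(h)/h$ exists and equals $c_\lambda$. Lemma \ref{lem:Slope} evaluates the same limit as $\gamma\,\Gamma(\frac d2)/(2\sqrt{\pi}\,\Gamma(\frac{d+1}{2}))$. Hence $c_\lambda$ equals this value, and it does not depend on $\lambda$. This proves the corollary.

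The only delicate point is checking that the two lemmas refer to the same quantity. This amounts to confirming that the normalisation of $\mu_{\gamma,\lambda}$ in Corollary \ref{cor:IntRepInvMeasure}, which is used in Lemma \ref{lem:Slope}, agrees with the push-forward measure $\gamma m_\lambda$ used in Lemma \ref{lem:Linear}. The two agree by the equivalence $C(H)\in\ell(h)\iff H\in[\ell(h)]_\lambda$ established there. For $\lambda=0$ and $\lambda=1$, the result can also be cross-checked against Lemmas \ref{lem:Lambda=0Explicitly} and \ref{lem:Lambda=1Explicitly}.
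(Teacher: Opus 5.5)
Your proposal is correct and is exactly the paper's argument. The paper obtains the corollary by combining Lemma~\ref{lem:Linear} (linearity with zero intercept, via the push-forward measure $m_\lambda$ on $L$) with Lemma~\ref{lem:Slope} (identification of the slope as $h\downarrow 0$); your extra remarks on $h=0$ and on the two lemmas referring to the same quantity $\mu_{\gamma,\lambda}([\ell(h)]_\lambda)$ are harmless but not needed, since both lemmas are stated for that very quantity.
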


\section{Expected volume of the visibility region}\label{sec:ExpectedVolume}

In this section we derive the formula for the expected volume of the visibility region $Z_{\gamma,\lambda,d}$. The calculations are analogous to the ones in \cite{BuehlerHugThaele} and will eventually show that $\mathbb{E}{\rm vol}_d(Z_{\gamma,\lambda,d})$ is independent of $\lambda$. For $u\in\mathbb{S}^{d-1}$ let
$$
s_u(\eta_{\gamma,\lambda})=\sup\left\{h\geq 0: \left\{r u : 0\leq r \leq \tanh\left(\frac{h}{2}\right)\right\} \cap H =\emptyset \text{ for all }H\in\eta_{\gamma,\lambda}\right\}.
$$
Here, the set $\{r u : 0\leq r < 1\}$ represents the geodesic ray in the Poincaré ball model starting at $o$ with direction $u$, and the upper bound $\tanh(h/2)$ corresponds to the Euclidean distance of a point with hyperbolic distance $h$ from the origin.

\begin{proof}[Proof of Theorem \ref{thm:MainIntro} (b)]
	Fix $u\in\mathbb{S}^{d-1}$ and let $\ell(h)$ be a geodesic segment of length $h>0$ starting at $o$ in an arbitrary direction. Then by Corollary \ref{coro:mu(F_[o,l])} we have
	\begin{align*}
		\p(s_u(\eta_{\gamma,\lambda})>h)&=\p(\eta_{\gamma,\lambda}([\ell(h)]_\lambda)=0)
		=\exp\big({-\mu_{\gamma,\lambda}([\ell(h)]_\lambda)}\big)=\exp\Big({-\gamma\,\frac{\Gamma(\frac{d}{2})}{2\sqrt{\pi}\,\Gamma(\frac{d+1}{2})}\,h}\Big),
	\end{align*}
	independently of $\lambda$.
	Hence, the random variable $s_u(\eta_{\gamma,\lambda})$  is exponentially distributed with parameter $\gamma\,\frac{\Gamma(\frac{d}{2})}{2\sqrt{\pi}\,\Gamma(\frac{d+1}{2})}$ and by the polar decomposition of hyperbolic space we have
	\begin{align*}
		\mathbb{E}{\rm vol}_d(Z_{\gamma,\lambda,d})
		&={2\pi^{d/2}\over\Gamma({d\over 2})}\E\Bigg[\int_{\mathbb{S}^{d-1}}\int_0^{s_u(\eta_{\gamma,\lambda})}\sinh^{d-1}(s)\;\mathrm{d}s\sigma_d(\mathrm{d}u)\Bigg]\\
		&={2\pi^{d/2}\over\Gamma({d\over 2})}\int_{\mathbb{S}^{d-1}}\int_0^{\infty}\p(s_u(\eta_{\gamma,\lambda})\geq s)\sinh^{d-1}(s)\;\mathrm{d}s\sigma_d(\mathrm{d}u)\Bigg]\\
		&={2\pi^{d/2}\over\Gamma({d\over 2})}\int_0^\infty \exp\Big({-\gamma\,\frac{\Gamma(\frac{d}{2})}{2\sqrt{\pi}\,\Gamma(\frac{d+1}{2})}\,s}\Big)\sinh^{d-1}(s)\;\mathrm{d}s,
	\end{align*}
	where $\sigma_d$ denotes the normalized spherical Lebesgue measure on $\mathbb{S}^{d-1}$.
	According to Identity 3.541.1 in \cite{GradRysz} (see also \cite[Equation (6.9)]{BuehlerHugThaele}) it holds that
	\begin{equation}\label{eq:SinhExpIntegral}
	\int_0^\infty\sinh^{d-1}(s)e^{-as}\,\mathrm{d}s=\frac{(d-1)!}{2^d}\frac{\Gamma(\frac{a-d+1}{2})}{\Gamma(\frac{a+d+1}{2})},
	\end{equation}
	whenever $a>d-1$. Thus, the expected volume is finite if and only if $\gamma\,\frac{\Gamma(\frac{d}{2})}{2\sqrt{\pi}\,\Gamma(\frac{d+1}{2})}>d-1$ or $\gamma>\frac{2(d-1)\sqrt{\pi}\Gamma(\frac{d+1}{2})}{\Gamma(\frac{d}{2})}=(d-1)^2\sqrt{\pi}\frac{\Gamma(\frac{d-1}{2})}{\Gamma(\frac{d}{2})}=\gamma_{\mathrm{crit}}$ and, in this case,
	$$
	\mathbb{E}{\rm vol}_d(Z_{\gamma,\lambda,d})=	\mathbb{E}{\rm vol}_d(Z_{\gamma,0,d}) = {2^{1-d}\pi^{d/2}(d-1)!\over \Gamma({d\over 2})}{\Gamma\big({\gamma_d^\ast-d+1\over 2}\big)\over\Gamma\big({\gamma_d^\ast+d+1\over 2}\big)}\quad\text{with}\quad \gamma_d^\ast := \gamma\,{\Gamma({d\over 2})\over2\sqrt{\pi}\Gamma({d+1\over 2})}.
	$$
	To arrive at the desired expression, we simplify the factor ${2^{1-d}\pi^{d/2}(d-1)!\over \Gamma({d\over 2})}$ using the Legendre duplication formula.
	It follows that
	\begin{align*}
			{2^{1-d}\pi^{d/2}(d-1)!\over \Gamma({d\over 2})}&=2^{1-d}\pi^{d/2}(d-1)! \frac{\Gamma(\frac{d+1}{2})}{2^{1-d}\sqrt{\pi}\Gamma(d)}= \pi^{\frac{d-1}{2}} \Gamma\left(\frac{d+1}{2}\right).
	\end{align*}
%
	Plugging this into the expression for $\mathbb{E}{\rm vol}_d(Z_{\gamma,\lambda,d})$ completes the proof.
\end{proof}

\begin{rema}\rm
	Note that, compared to the constant $\gamma_d^*$ obtained in \cite[Theorem 7.1]{BuehlerHugThaele}, our expression for $\gamma_d^*$ differs by an additional factor of $\frac{1}{2}$. This is due to our choice of considering the space $\mathrm{Hyp}_\lambda^o$ rather than $\mathrm{Hyp}_\lambda$, i.e., the choice of omitting all those $\lambda$- geodesic hyperplanes whose compact side contains $o$. For the case $\lambda=0$, this restriction leads to a factor of $\frac{1}{2}$ in the measure considered on $\mathrm{Hyp}_0^o$ compared to the measure on $\mathrm{Hyp}_0$ used in \cite{BuehlerHugThaele}.
\end{rema}

	\begin{rema}\rm
	The notion of $\lambda$-geodesic hyperplanes as totally umbilical hypersurfaces of $\mathbb{H}^d$ can be extended to the case $\lambda>1$. In that situation, a $\lambda$-geodesic 'hyperplane' is a hyperbolic sphere of radius
	\[
	R_\lambda : = \operatorname{artanh}\Big(\frac{1}{\lambda}\Big).
	\]
	Hence, in this regime the visibility question in presence of a Poisson process of such spheres reduces to that in a  Boolean model of balls with fixed radius $R_\lambda$, which in turn has been studied in \cite{L96}. As for $\lambda\in[0,1]$, there exists a critical intensity $\gamma_{B,\mathrm{crit}}>0$ such that the visibility region is unbounded with positive probability if $\gamma<\gamma_{B,\mathrm{crit}}$ and almost surely bounded for $\gamma>\gamma_{B,\mathrm{crit}}$. However, a key difference to the case $\lambda\in[0,1]$ is that, for $\lambda>1$, this critical intensity depends on $\lambda$. By \cite[p.\ 447]{L96} and \cite[Remark 6.5]{BuehlerHugThaele} it is given by $$\gamma_{B,\mathrm{crit}}=\frac{(d-1)\Gamma(\frac{d+1}{2})}{\pi^{(d-1)/2}\sinh(R_\lambda)}=\frac{(d-1)\Gamma(\frac{d+1}{2})}{\pi^{(d-1)/2}(\lambda^2-1)^{(d-1)/2}}.$$ In the bounded phase, the expected visible volume is derived in \cite[Theorem 6.2]{BuehlerHugThaele}. It can be expressed in the same way as the mean visible volume in \eqref{eq:MeanVolume} by replacing $\gamma_d^*$ by $v_d^*=\gamma\frac{\pi^{(d-1)/2}}{\Gamma(\frac{d+1}{2})}\sinh(R_\lambda)=\gamma\frac{\pi^{(d-1)/2}}{\Gamma(\frac{d+1}{2})(\lambda^2-1)^{(d-1)/2}}$.
\end{rema}

\end{document}